\theoremstyle{plain}
\newtheorem{thm}[equation]{Theorem}
\newtheorem{lem}[equation]{Lemma}
\newtheorem{prop}[equation]{Proposition}
\newtheorem*{theo}{Theorem}
\theoremstyle{definition}
\theoremstyle{remark}
\newtheorem{remark}[equation]{Remark}
\numberwithin{equation}{subsection}
\def\sheafEnd{\mathcal{E} \hspace{-1pt} \mathit{nd}}
\def\sheafHom{\mathcal{H} \hspace{-1pt} \mathit{om}}
\newcommand{\bk}{\Bbbk}
\newcommand{\bbD}{\mathbb{D}}
\newcommand{\bbP}{\mathbb{P}}
\newcommand{\bbZ}{\mathbb{Z}}
\newcommand{\bbX}{\mathbb{X}}
\newcommand{\bbY}{\mathbb{Y}}
\newcommand{\calA}{\mathcal{A}}
\newcommand{\calB}{\mathcal{B}}
\newcommand{\calC}{\mathcal{C}}
\newcommand{\calD}{\mathcal{D}}
\newcommand{\calE}{\mathcal{E}}
\newcommand{\calF}{\mathcal{F}}
\newcommand{\calG}{\mathcal{G}}
\newcommand{\calK}{\mathcal{K}}
\newcommand{\calL}{\mathcal{L}}
\newcommand{\calM}{\mathcal{M}}
\newcommand{\calN}{\mathcal{N}}
\newcommand{\calO}{\mathcal{O}}
\newcommand{\calP}{\mathcal{P}}
\newcommand{\calQ}{\mathcal{Q}}
\newcommand{\calR}{\mathcal{R}}
\newcommand{\calS}{\mathcal{S}}
\newcommand{\calT}{\mathcal{T}}
\newcommand{\calU}{\mathcal{U}}
\newcommand{\calV}{\mathcal{V}}
\newcommand{\wcalN}{\widetilde{\mathcal{N}}}
\newcommand{\wcalD}{\widetilde{\mathcal{D}}}
\newcommand{\fD}{\mathfrak{D}}
\newcommand{\fL}{\mathfrak{L}}
\newcommand{\fP}{\mathfrak{P}}
\newcommand{\fZ}{\mathfrak{Z}}
\newcommand{\fb}{\mathfrak{b}}
\newcommand{\fg}{\mathfrak{g}}
\newcommand{\fh}{\mathfrak{h}}
\newcommand{\fn}{\mathfrak{n}}
\newcommand{\fp}{\mathfrak{p}}
\newcommand{\ft}{\mathfrak{t}}
\newcommand{\wfg}{\widetilde{\mathfrak{g}}}
\newcommand{\bfD}{\mathbf{D}}
\newcommand{\rmD}{\mathrm{D}}
\newcommand{\rmS}{\mathrm{S}}
\newcommand{\rcap}{{\stackrel{_R}{\cap}}}
\newcommand{\Gm}{{\mathbb{G}}_{\mathbf{m}}}
\newcommand{\Hom}{\mathrm{Hom}}
\newcommand{\End}{\mathrm{End}}
\newcommand{\Ext}{\mathrm{Ext}}
\newcommand{\Coind}{{\rm Coind}}
\newcommand{\For}{{\rm For}}
\newcommand{\Id}{{\rm Id}}
\newcommand{\Coh}{{\rm Coh}}
\newcommand{\Mod}{{\rm Mod}}
\newcommand{\qis}{{\rm qis}}
\newcommand{\soc}{{\rm soc}}
\newcommand{\rk}{{\rm rk}}
\newcommand{\qc}{{\rm qc}}
\newcommand{\fig}{{\rm fg}}
\newcommand{\gr}{{\rm gr}}
\newcommand{\SL}{{\rm SL}}
\newcommand{\op}{{\rm op}}
\newcommand{\scF}{\mathscr{F}}
\newcommand{\scG}{\mathscr{G}}
\newcommand{\Ug}{\calU\fg}
\newcommand{\ri}{\rm{(i)}}
\newcommand{\rii}{\rm{(ii)}}
\newcommand{\aff}{{\rm aff}}
\newcommand{\DGCoh}{{\rm DGCoh}}
\newcommand{\HC}{{\rm HC}}
\newcommand{\Mo}{\mathrm{-Mod}_-^{\mathrm{qc}}}
\author{Simon Riche}
\address{Clermont Université, Université Blaise Pascal, Laboratoire de  
Mathéma\-tiques, BP 10448, F-63000 Clermont-Ferrand. \newline
\indent CNRS, UMR 6620, Laboratoire de Mathématiques, F-63177 Aubiere.}
\email{simon.riche@math.univ-bpclermont.fr}
\title[Koszul duality and Frobenius]{Koszul duality and Frobenius structure for restricted enveloping algebras}
\begin{document}

\begin{abstract}

Let $\fg$ be the Lie algebra of a connected, simply connected semisimple algebraic group over an algebraically closed field of sufficiently large positive characteristic. We study the compatibility between the Koszul grading on the restricted enveloping algebra $(\Ug)_0$ of $\fg$ constructed in \cite{Ri}, and the structure of Frobenius algebra of $(\Ug)_0$. This answers a question raised to the author by W.~Soergel.

\end{abstract}

\maketitle

\tableofcontents

\section*{Introduction}

\subsection{} 

Let $G$ be a connected, simply connected semisimple algebraic group over an algebraically closed field $\bk$ of characteristic $p>0$. Let \[ (\Ug)_0 \ := \ \Ug/\langle X^p - X^{[p]}, \ X \in \fg \rangle \] be the associated restricted enveloping algebra. Generalizing a result from \cite{AJS}, we have proved in \cite{Ri} that, if $p$ is sufficiently large, this algebra can be endowed with a Koszul grading, i.e.~a grading which makes it a Koszul ring in the sense of \cite{BGS}.

On the other hand, it is well-known and easy to prove (see \cite{Be}) that $(\Ug)_0$ is a Frobenius algebra. More precisely, there is a natural isomorphism of algebra $\Ug \to \Ug^{\op}$, induced by the assignment $X \in \fg \mapsto -X$, which induces an isomorphism \[ \Phi : (\Ug)_0 \ \xrightarrow{\sim} \ (\Ug)_0^{\op}. \] Using this isomorphism, the standard duality for $\bk$-vector spaces $M \mapsto M^*:=\Hom_{\bk}(M,\bk)$ induces an anti-equivalence of the category of finite dimensional (left) $(\Ug)_0$-modules, denoted $(-)^{\vee}$. Then the fact that $(\Ug)_0$ is a Frobenius algebra amounts to the existence of an isomorphism of left $(\Ug)_0$-modules \[ \theta : (\Ug)_0 \ \xrightarrow{\sim} \ \bigl( (\Ug)_0 \bigr)^{\vee}. \]

Our aim in this article is to compare these two structures. We show that there exists a Koszul grading on $(\Ug)_0$ such that $\Phi$ becomes an isomorphism of \emph{graded} algebras, and such that $\theta$ is an isomorphism of \emph{graded} $(\Ug)_0$-modules, up to some shifts to be explained below.

This problem was suggested to us by Wolfgang Soergel.

\subsection{}

To state our results precisely, we need to introduce the various blocks of the algebra $(\Ug)_0$. For simplicity, we assume from now on that $p>h$, where $h$ is the Coxeter number of $G$. Let also $T \subset B \subset G$ be a maximal torus and a Borel subgroup, $\ft \subset \fb \subset \fg$ their Lie algebras, and $\bbX:=X^*(T)$ the character lattice. Let also $W$ be the Weyl group, $W_{\aff}':=W \ltimes \bbX$ the extended affine Weyl group, and let $\rho$ be the opposite of the half sum of the roots of $\fb$. We consider the following ``dot'' actions of $W$ on $\ft^*$, respectively of $W_{\aff}'$ on $\bbX$: \[ w \bullet \lambda = w(\lambda + \rho) - \rho, \qquad \text{respectively } \ (w t_{\mu}) \bullet \nu = w(\nu + p \mu + \rho) - \rho.  \]

The subalgebra $(\Ug)^G \subset \Ug$ is central, and isomorphic to $\bk[\ft^* / (W,\bullet)]$. Its image in $(\Ug)_0$ is a central subalgebra $\fZ_0$, whose set of characters is naturally parametrized by $\bbX/(W_{\aff}',\bullet)$. For any $\lambda \in \bbX$ (or in $\bbX/(W_{\aff}',\bullet)$), we denote by $(\Ug)_0^{\widehat{\lambda}}$ the completion of $(\Ug)_0$ with respect to the annihilator of $\lambda$ in $\fZ_0$. It is a finite dimensional algebra, whose category of finitely generated modules is equivalent to the category of finitely generated $(\Ug)_0$-modules with generalized central character $\lambda$ (for $\fZ_0$). Moreover, there is a natural isomorphism of algebras \[ (\Ug)_0 \ \cong \ \prod_{\lambda \in \bbX/(W_{\aff}',\bullet)} \, (\Ug)_0^{\widehat{\lambda}}. \] Hence, constructing a Koszul grading on $(\Ug)_0$ is equivalent to constructing a Koszul grading on each on the subalgebras $(\Ug)_0^{\widehat{\lambda}}$.

For any $\lambda \in \bbX$, $\Phi$ induces an isomorphism \[ \Phi_{\lambda} : (\Ug)_0^{\widehat{\lambda}} \xrightarrow{\sim} \bigl( (\Ug)_0^{\widehat{-\lambda-2\rho}} \bigr)^{\op}, \] and $\theta$ induces an isomorphism of $(\Ug)_0^{\widehat{\lambda}}$-modules \[ \theta_{\lambda} : (\Ug)_0^{\widehat{\lambda}} \xrightarrow{\sim} \bigl( (\Ug)_0^{\widehat{-\lambda-2\rho}} \bigr)^{\vee}. \]

\subsection{}

Our main result is the following.

\begin{theo}

Assume $p \gg 0$.

There exists a Koszul grading on the algebra $(\Ug)_0$ such that: \begin{enumerate}
\item The natural isomorphism $\Phi : (\Ug)_0 \xrightarrow{\sim} \bigl((\Ug)_0\bigr)^{\op}$ is an isomorphism of graded rings.
\item For any $\lambda \in \bbX$, there exists an explicit $N_{\lambda} \in \bbZ$ and an isomorphism of graded left $(\Ug)_0^{\widehat{\lambda}}$-modules \[ (\Ug)_0^{\widehat{\lambda}} \ \cong \ \bigl((\Ug)_0^{\widehat{-\lambda-2\rho}}\bigr)^{\vee} \langle N_{\lambda} \rangle. \]

\end{enumerate}

\end{theo}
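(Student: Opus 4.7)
The plan is to exploit the geometric realization of the Koszul grading constructed in \cite{Ri}. That construction presents each block $(\Ug)_0^{\widehat{\lambda}}$ as (a completion of) a $(B \times \Gm)$-equivariant $\Ext$-algebra of a tilting generator supported on a suitable subvariety of $\wcalN \times_{\calN} \wcalN$, where the $\Gm$-factor rescales the fibers of the Springer resolution $\wcalN \to \calN$; the Koszul grading records the $\Gm$-weight. Both $\Phi$ and $\theta$ respect the block decomposition, sending the block indexed by $\lambda$ to the one indexed by $-\lambda - 2\rho$, so it suffices to work block by block.

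For part~(1), I would translate the algebra anti-automorphism $X \mapsto -X$ of $\fg$ into geometry: it corresponds to the involution $\iota$ of $\wcalN$ given by multiplication by $-1$ on cotangent fibers (equivalently, by negation on $\fn \subset \fg$). Since $\iota$ commutes with the $\Gm$-action on fibers, its pullback is graded, and under the equivalences of \cite{Ri} it matches $\Phi_{\lambda}$. One then checks that the Koszul grading on $(\Ug)_0^{\widehat{-\lambda-2\rho}}$ obtained by transport through $\Phi_{\lambda}$ coincides with the one produced directly by \cite{Ri}; this should follow from the naturality of the tilting generator under $\iota$, by arranging that the generator for the block $-\lambda-2\rho$ is $\iota^{*}$ of the one for $\lambda$.

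For part~(2), I would identify the vector-space duality $(-)^{\vee}$ on modules with Grothendieck--Serre duality on the geometric side. Since $\wcalN$ is a symplectic resolution (or a parabolic analogue in the singular case), its dualizing complex is the structure sheaf placed in cohomological degree $2\dim\wcalN$ and carrying a nontrivial $\Gm$-weight coming from the symplectic form. Serre duality therefore produces a cohomological shift together with an internal $\Gm$-twist which, under the equivalences of \cite{Ri}, combine into a single grading shift $\langle N_{\lambda}\rangle$; in the regular case $N_{\lambda} = 2\dim G/B$, with an explicit correction in terms of the stabilizer of $\lambda$ in $W_{\aff}'$ in the singular case. The pairing $\lambda \leftrightarrow -\lambda-2\rho$ emerges automatically because Serre duality intertwines $\iota^{*}$ with $\iota_{*}$.

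The main obstacle is the bookkeeping in part~(2): the chain of equivalences in \cite{Ri} (localization to $\wcalD$-modules, formality, and a Koszul duality for graded rings) each reshuffles $\Gm$-weights and cohomological degrees in controlled but sign-sensitive ways. One must verify that after all of them, Grothendieck--Serre duality on the geometric side genuinely corresponds to $(-)^{\vee}$ on modules, and then extract the exact value of $N_{\lambda}$ without sign or normalization errors. Tracking the interaction of $\iota$ with the duality functor is also where the $\lambda \leftrightarrow -\lambda-2\rho$ switch in~(2) must be justified carefully, and is what ultimately ties parts~(1) and~(2) together into a statement about a single Koszul grading.
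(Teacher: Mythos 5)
Your general geometric plan is in the right direction: both part~(1) and part~(2) of the theorem are indeed proved by realizing the algebra anti-automorphism $X \mapsto -X$ as the involution $\sigma$ (your $\iota$) multiplying fibers by $-1$, and by realizing $(-)^{\vee}$ as a Grothendieck--Serre-type duality on the geometric side (this is Proposition \ref{prop:BMR-duality-Fr}, borrowed from \cite{BMR2}). However, there are two genuine gaps.

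First, the geometric model is slightly wrong: the Koszul grading in \cite{Ri} is \emph{not} realized via a tilting generator on $\wcalN \times_{\calN} \wcalN$; it comes from graded lifts $\fP^{y,\gr}_w$ of the (localized) indecomposable projectives inside the category $\DGCoh^{\gr}\bigl((\wfg \,\rcap_{\fg^* \times \calB}\, \calB)^{(1)}\bigr)$ attached to the \emph{derived intersection}, and equivalently from a $\Gm$-equivariant structure on the splitting bundle $\calM^{\lambda}_{\gr}$. The ambient variety for the dg-scheme is $\wfg^{(1)}$, not the Steinberg variety.

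Second and more seriously, your proposal has no mechanism to pin down the internal grading shift, which is the whole content of the theorem. After applying $\sigma^*$ together with your Serre-duality normalization, all you can conclude (from uniqueness of indecomposable graded lifts) is that there is \emph{some} $n\in\bbZ$ with $\bfD_{\calT}^{\gr}(\fP^{y,\gr}_{\tau_0 w}) \cong \fP^{z,\gr}_{\tau_0 \iota_\lambda(w)}\langle n\rangle$; you call computing $n$ ``bookkeeping,'' but the normalizations cannot be tracked through the equivalences of \cite{Ri} by weight-counting, because the graded lifts themselves are only characterized abstractly. The paper resolves this (Proposition \ref{prop:duality-L-P}) by the essential extra ingredient you are missing: the compatibility of the \emph{linear Koszul duality} $\kappa_{\calB}$ with the homological dualities $\bbD_{\calS}$, $\bbD_{\calT}$ (Proposition \ref{prop:compatibility-lkd-duality}). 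One first observes there is a shift $n$ on the $\wcalN^{(1)}$-side for the lifts of simples; transporting through $\kappa_{\calB}$ (via the ``key result'' Theorem \ref{thm:Ri}, which matches lifts of simples to lifts of projectives) turns the unknown $\langle n\rangle$ on simples into $\langle n\rangle[n]$ on projectives; and then the cohomological part of the shift is killed by the ungraded statement (Proposition \ref{prop:BMR-duality-Fr}) that $(\widehat{\gamma}^{\calB}_\lambda \fP^y_{\tau_0 w})^{\vee}$ is concentrated in degree $0$. That $[n]$-versus-$\langle n\rangle$ interplay through $\kappa$ --- absent from your outline --- is exactly what forces $n=0$; without it the argument does not close.
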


\subsection{} \label{ss:intro-shift}

Let us first explain more precisely what is $N_{\lambda}$. Recall that a weight $\lambda \in \bbX$ is called \emph{regular} if $\langle \lambda + \rho, \alpha^{\vee} \rangle \notin p \bbZ$ for any root $\alpha$. For such a weight, we have $N_{\lambda}=2 \dim(G/B)$. In this case, the theorem is proved in Propositions \ref{prop:antiautomorphism-grading} and \ref{prop:duality-regular-rep}.

For a general $\mu \in \bbX$, there exists a standard parabolic subgroup $P \subset G$, and some $w \in W_{\aff}'$, such that the stabilizer of $w \bullet \mu$ in $W_{\aff}'$ (for the dot action) is the Weyl group of the Levi of $P$ (see \cite[Lemma 1.5.2]{BMR2}). In this case, we have $N_{\mu}=2 \dim(G/P)$, and the theorem is proved in Propositions \ref{prop:antiautomorphism-grading-singular} and \ref{prop:duality-regular-rep-singular}.

\subsection{}

Now, let us explain the condition on $p$. It is the same as that of \cite{Ri}. It depends on the weight. For regular weights, the condition is that Lusztig's conjecture on characters of simple $G$-modules is true. (Recall that it is conjectured that this character formula holds as soon as $p>h$, but it is only known that it is true for $p$ bigger than an explicit bound which is much larger than $h$, see \cite{Fi}. See also \cite[\S 0.5]{Ri} for further references.) For singular weights, we make an extra assumption which is known to be true for $p$ bigger than an explicit bound depending on $G$ and the weight under consideration.

\subsection{}

As suggested by the description of $N_{\lambda}$ in \S \ref{ss:intro-shift}, the proof of the theorem (whose statement is of algebraic nature) is based on geometry, and more precisely on the localization theory in positive characteristic developed by Bezrukavnikov, Mirkovi{\'c} and Rumynin, see \cite{BMR, BMR2, BM}. We also use linear Koszul duality, a geometric version of the standard Koszul duality between symmetric and exterior algebras due to I.~Mirkovi{\'c}, see \cite[Section 2]{Ri}. (Note that here we use the \emph{covariant} version of this duality, and not the \emph{contravariant} version of \cite{MRlkd, MRHec}.) 

We first prove, in a general context, that linear Koszul duality commutes with homological duality (see Proposition \ref{prop:compatibility-lkd-duality}). Then, as in \cite{Ri}, we apply linear Koszul duality to a particular geometric context to contruct a ``Koszul duality'' between certain categories of $\Ug$-modules. The ``key result'' of \cite{Ri} states that, if $p \gg 0$, this duality sends certain (uniquely determined) lifts of simples to certain lifts of indecomposable projectives. It follows from the particular case of Proposition \ref{prop:compatibility-lkd-duality} that a geometric version of the duality $(-)^{\vee}$ commutes with our Koszul duality. We deduce that this geometric version of $(-)^{\vee}$ sends our lifts of projectives to lifts of projectives (see Proposition \ref{prop:duality-L-P}). We deduce the main theorem from this geometric statement.

\subsection{Organization of the paper}

In Section \ref{sec:lkd} we prove our general results on linear Koszul duality. In fact we reprove the equivalence of \cite[Theorem 2.3.10]{Ri} under weaker hypotheses, and with shorter proofs. We also construct homological dualities for the categories under consideration, and prove that linear Koszul duality commutes with homological duality.

In Section \ref{sec:reminder} we review the results of \cite{BMR,BMR2} and \cite{Ri} that will be needed. We also explain how one can explicitly construct the Koszul grading on $(\Ug)_0$, given our prefered lifts of indecomposable projectives (see Theorems \ref{thm:koszul-grading} and \ref{thm:koszul-grading-singular}).

In Section \ref{sec:koszul-duality-regular}, we prove the theorem for regular blocks. We first prove a geometric statement (see Proposition \ref{prop:duality-L-P}), and show that it implies our algebraic statements.

In Section \ref{sec:koszul-duality-singular}, we prove the theorem for singular blocks. The proof is very similar to that for regular blocks.

Finally, in Section \ref{sec:example} we explain the content of our results in the special case $G=\SL(2)$.

\subsection{Acknowledgements}

We warmly thank W.~Soergel for suggesting this problem.

The author is supported by the French National Research Agency (ANR-09-JCJC-0102-01).

\subsection{Some notation}

For $\bk$ an algebraically closed field of positive characteristic, and $Y$ a $\bk$-scheme, we denote by $Y^{(1)}$ the Frobenius twist of $Y$ (see \cite[\S 1.1]{BMR}).

For $Y$ a scheme, and $X \subset Y$ a subscheme, we denote by $\Coh_X(Y)$ the category of coherent sheaves on $Y$ which are set-theoretically supported on $X$.

For any smooth variety $X$ endowed with an action of an algebraic group $H$, we denote by \[ \bbD_X : \calD^b \Coh^H(X) \xrightarrow{\sim} \bigl( \calD^b \Coh^H(X) \bigr)^{\op} \] the equivalence given by $\bbD_X:=R\sheafHom_{\calO_X}(-,\calO_X)$. When $X=Y^{(1)}$, when no confusion can arise we sometimes write $\bbD_Y$ instead of $\bbD_{Y^{(1)}}$.

We denote by $V^*$ the dual of a vector space or more generally a vector bundle $V$, and by $\calV^{\star}:=\sheafHom_{\calO_X}(\calV,\calO_X)$ the dual of an $\calO_X$-module $\calV$, or a complex of $\calO_X$-modules.

In any category of $\Gm$-equivariant objects (in particular for graded modules over a graded algebra), we denote by $\langle 1 \rangle$ the tensor product with the $1$-dimensional $\bk^{\times}$-module given by $\Id_{\bk^{\times}}$. We denote by $\langle j \rangle$ the $j$-th power of $\langle 1 \rangle$.

\setcounter{subsection}{0}

\section{Linear Koszul duality and homological duality} \label{sec:lkd}

\subsection{Covariant linear Koszul duality} \label{ss:lkd}

In this subsection we reprove the results of \cite[Section 2]{Ri} in a more general setting (and with shorter proofs). The main new ideas are taken from \cite{Po}. A similar generalization of the main result of \cite{MRlkd} will appear in a forthcoming paper in collaboration with I. Mirkovi{\'c}.

Let $X$ be a scheme, and let $E \to X$ be a vector bundle over $X$. Let also $F \subset E$ be a subbundle, and $F^{\bot} \subset E^{*}$ be the orthogonal to $F$. Let also $\calE$ and $\calF$ be the (locally free) sheaves of sections of $E$ and $F$, and let $\calF^{\bot}$ be the orthogonal to $\calF$ inside $\calE^{\star}$. Our goal is to construct a ``Koszul duality'' equivalence between certain categories of coherent (dg-)sheaves on the dg-schemes\footnote{See \cite[\S 1.8]{Ri} for generalities on dg-schemes, and in particular on derived intersections.} $F$ and $F^{\bot} \rcap_{E^*} X$, where $X \subset E^*$ is the zero section.

Consider the $\Gm$-equivariant dg-algebras \[ \calS:=\mathrm{S}(\calF^{\star}), \quad \calR:=\mathrm{S}(\calF^{\star}), \quad \calT:=\Lambda(\calF) \] endowed with the trivial differential, where $\calF^{\star}$ is in bidegree $(2,-2)$, respectively $(0,-2)$, and $\calF$ is in bidegree $(-1,2)$. For a $\Gm$-equivariant dg-algebra $\calA$, we denote by $\calC(\calA\mathrm{-Mod}^{\qc})$ the category of $\Gm$-equivariant $\calA$-dg-modules which are $\calO_X$-quasi-coherent\footnote{Here for simplicity we restrict from the beginning to quasi-coherent dg-modules. However, this assumption is used only in the proof of Proposition \ref{prop:equivalences-fg}. Equivalently, one can put the quasi-coherency assumption on the cohomology of the dg-modules instead, as in \cite{Ri}.}, and by $\calC(\calA\Mo)$ the subcategory whose objects are bounded above for the internal degree (uniformly in the cohomological degree).

Consider the functors \[ \scF : \left\{ \begin{array}{ccc} \calC(\calS\Mo) & \to & \calC(\calT\Mo) \\ \calM & \mapsto & \calT^{\star} \otimes_{\calO_X} \calM \end{array} \right. \] and \[ \scG : \left\{ \begin{array}{ccc} \calC(\calT\Mo) & \to & \calC(\calS\Mo) \\ \calN & \mapsto & \calS \otimes_{\calO_X} \calN \end{array} \right. . \] Here, the actions and differentials on the dg-modules $\calT^{\star} \otimes_{\calO_X} \calM$ and $\calS \otimes_{\calO_X} \calN$ are defined as in \cite[\S 2.1]{Ri}.

\begin{lem}

The functors $\scF$ and $\scG$ are exact (i.e.~they send acyclic dg-modules to acyclic dg-modules, or equivalently quasi-isomorphisms to quasi-isomorphisms).

\end{lem}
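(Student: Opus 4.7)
The plan is to reduce the exactness of both functors to the fact that tensoring over $\calO_X$ with a locally free $\calO_X$-module preserves acyclicity, by introducing a suitable filtration whose associated graded kills the Koszul-type part of the differential.

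First I would unpack the structure. As graded $\calO_X$-modules, the underlying sheaves of $\scF(\calM)$ and $\scG(\calN)$ are $\calT^{\star} \otimes_{\calO_X} \calM$ and $\calS \otimes_{\calO_X} \calN$. Since $\calF$ is a locally free sheaf of finite rank, $\calT = \Lambda(\calF)$, $\calT^{\star}$, and each internal-degree piece of $\calS = \mathrm{S}(\calF^{\star})$ are locally free $\calO_X$-modules of finite rank. The differentials defined in \cite[\S 2.1]{Ri} on $\scF(\calM)$ and $\scG(\calN)$ have the shape $\mathrm{id} \otimes d_{\calM} + \kappa_{\calM}$ (resp.\ $\mathrm{id} \otimes d_{\calN} + \kappa_{\calN}$), where $\kappa$ denotes the Koszul-type term built from the $\calS$-action (resp.\ $\calT$-action) and the canonical element of $\calF \otimes \calF^{\star}$; in particular $\kappa$ strictly raises the $\calF^{\star}$-degree of the first tensor factor by $1$ while the term $\mathrm{id}\otimes d$ preserves it.

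Consider the filtration $F^{\bullet}\scF(\calM)$ by the exterior degree in $\calT^{\star}$. Because $\kappa$ raises this degree, the differential on the associated graded is just $\mathrm{id}\otimes d_{\calM}$. Since each graded piece of $\calT^{\star}$ is a locally free $\calO_X$-module of finite rank, if $\calM$ is acyclic then so is $\gr^{\bullet}\scF(\calM)$. The filtration is bounded (as $\calT = \Lambda(\calF)$ has finite amplitude), so acyclicity of the associated graded implies acyclicity of $\scF(\calM)$ itself, either directly by induction on filtration length or via the associated convergent spectral sequence. Thus $\scF$ is exact.

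For $\scG$, apply the analogous filtration by the symmetric degree in $\calS$. Again $\gr^{\bullet}\scG(\calN) = \calS \otimes_{\calO_X} \calN$ with differential $\mathrm{id}\otimes d_{\calN}$, and each graded piece $\mathrm{S}^n(\calF^{\star})$ is locally free of finite rank, so $\gr^{\bullet}\scG(\calN)$ is acyclic when $\calN$ is. The only point that differs from the previous case is convergence: $\calS$ is not bounded. This is precisely where the subscript ``$-$'' in $\calT\Mo$ enters --- the internal degree of $\calN$ is bounded above (uniformly in cohomological degree), and the functor $-\otimes_{\calO_X}\mathrm{S}^n(\calF^{\star})$ shifts internal degrees up by $2n$, so in any fixed total bidegree of $\scG(\calN)$ only finitely many filtration steps contribute. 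Hence the filtration is exhaustive and bounded in each bidegree, and acyclicity of the associated graded forces acyclicity of $\scG(\calN)$.

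The only delicate point is the convergence argument for $\scG$, where one must track the interaction between the boundedness-above condition on internal degrees and the positivity of the $\calS$-grading; this is the reason the functors are defined on $\calA\Mo$ rather than on all of $\calA\mathrm{-Mod}^{\qc}$. Once this is in place the equivalent characterization in terms of quasi-isomorphisms follows by applying exactness to the mapping cone.
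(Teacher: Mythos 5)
Your proof is correct and takes essentially the same route as the paper: both arguments fix the internal degree, observe that the boundedness-above hypothesis forces the $\calS$-degree filtration (equivalently, the sum $\bigoplus_{i+j=n}\calS_i\otimes_{\calO_X}\calN_j$) to be finite in each internal degree, note that the associated graded carries only the differential $\mathrm{id}\otimes d_{\calN}$, and then use flatness of the homogeneous pieces of $\calS$ (resp.\ $\calT^{\star}$). The paper phrases the final step as ``shifts and cones a finite number of times'' where you invoke a bounded filtration or its spectral sequence, but these are the same reduction; you also correctly flag that $\scF$ is the easier case because $\calT^{\star}$ is bounded.
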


\begin{proof} This proof is taken from \cite[Theorem A.1.2]{Po}. We give the proof for $\scG$; the case of $\scF$ is similar and simpler.

Let $\calN$ be an acyclic object of $\calC(\calS\Mo)$. Up to some shift, we can assume $\calN_i=0$ for $i>0$. Then we have \[ (\scG(\calN))_n \ = \ \bigoplus_{\genfrac{}{}{0pt}{}{i \leq 0, j \leq 0,}{n=i+j}} \calS_i \otimes_{\calO_X} \calN_j. \] Remark that this sum is finite. Hence the homogeneous components of $\scG(\calN)$ are obtained from the homogeneous components of $\calN$ by tensoring with the homogeneous components of $\calS$ (which are flat) and taking shifts and cones a finite number of times. Hence they are acyclic, which proves the result.\end{proof}

We denote by \[ \overline{\scF} : \calD(\calS\Mo) \to \calD(\calT\Mo), \quad \overline{\scG} : \calD(\calT\Mo) \to \calD(\calS\Mo) \] the functors induced between the corresponding derived categories.

\begin{prop}

The functors $\overline{\scF}$ and $\overline{\scG}$ are quasi-inverse equivalences of categories.

\end{prop}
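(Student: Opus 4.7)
The plan is to exhibit explicit natural morphisms $\eta : \Id \Rightarrow \overline{\scG} \circ \overline{\scF}$ and $\varepsilon : \overline{\scF} \circ \overline{\scG} \Rightarrow \Id$ at the dg-level, and then prove that they are quasi-isomorphisms by reducing to the classical acyclicity of the Koszul complex.

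First I would introduce the \emph{Koszul complex} $\scK := \calS \otimes_{\calO_X} \calT^{\star}$, equipped with its Koszul differential $d_K$ built from the canonical section of $\calF^{\star} \otimes \calF$. Working locally on a trivializing open for $\calF$, a direct computation shows that the inclusion of the summand of bidegree $(0,0)$ gives a quasi-isomorphism $\calO_X \hookrightarrow \scK$ of left $\calT$-dg-modules, and dually that the projection to the summand of bidegree $(0,0)$ gives a quasi-isomorphism $\scK \twoheadrightarrow \calO_X$ of left $\calS$-dg-modules. Unwinding the definition of the differentials from \cite[\S 2.1]{Ri}, I would then identify $\scG \scF(\calM) \cong \scK \otimes_{\calO_X} \calM$ as $\calS$-dg-modules (the total differential combining $d_K$, the $\calS$-action on $\calM$, and the original differential of $\calM$), and similarly $\scF \scG(\calN) \cong \scK \otimes_{\calO_X} \calN$ as $\calT$-dg-modules.

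Next I would define $\eta_{\calM}$ as the composition $\calM = \calO_X \otimes_{\calO_X} \calM \hookrightarrow \scK \otimes_{\calO_X} \calM = \scG \scF(\calM)$, and $\varepsilon_{\calN}$ via the dual augmentation. A routine check confirms that these are morphisms of dg-modules, natural in $\calM$ and $\calN$. To show they induce quasi-isomorphisms in the derived categories, I would use the filtration of $\scK$ by internal degree: its associated graded is (up to shift) the classical Koszul complex attached to $\calF$, which is acyclic outside bidegree $(0,0)$. Tensoring with $\calM$ (or $\calN$) and using that the components of $\calS$ and $\calT^{\star}$ in each internal degree are locally free of finite rank, one obtains an induced filtration on the total complex; the bounded-above condition on the internal grading ensures that, in each fixed cohomological degree, only finitely many terms contribute. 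A standard spectral sequence argument then shows that the cohomology collapses onto $\calM$ (respectively $\calN$).

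The main obstacle is precisely this last convergence argument. Without the uniform bounded-above hypothesis on the internal grading built into $\calC(\calA \Mo)$, the tensor product $\scK \otimes_{\calO_X} \calM$ would involve infinite sums in a fixed cohomological degree, obstructing the exchange of limits with cohomology and therefore blocking the transfer of the local acyclicity of $\scK$ to $\scG \scF(\calM)$. The definition of $\calC(\calA \Mo)$ is tailored to avoid this pathology, following the strategy of \cite[Theorem A.1.2]{Po}.
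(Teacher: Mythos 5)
Your overall strategy matches the paper's: exhibit the Koszul complex $\scK = \calS \otimes_{\calO_X} \calT^{\star}$, identify $\scG\scF$ and $\scF\scG$ as tensoring with $\scK$, and reduce the question to the acyclicity of $\scK$ (away from bidegree $(0,0)$) degree by degree, using the bounded-above hypothesis to get finiteness of the relevant filtration. The paper packages this slightly differently, by first noting that $\scF$ and $\scG$ are \emph{adjoint} ($\scG$ left adjoint to $\scF$, since $\scG$ is a free-module functor and $\scF$ a cofree-module functor) and then showing the counit $\scG\scF \to \Id$ is a quasi-isomorphism; the finiteness-in-each-internal-degree reduction is the same and also invokes \cite[Lemma 2.5.1]{MRlkd} for the acyclicity of $\calK^{(1)}$.

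However, there is a genuine gap in your setup: the directions of your natural transformations are reversed, and in these directions the maps are \emph{not} morphisms of dg-modules. You propose $\eta_{\calM} : \calM \to \scG\scF(\calM) = \calS \otimes \calT^{\star} \otimes \calM$, $m \mapsto 1 \otimes 1 \otimes m$. The $\calS$-module structure on $\scG(-) = \calS \otimes -$ is via left multiplication on the $\calS$ tensor factor, so $s \cdot \eta_{\calM}(m) = s \otimes 1 \otimes m$, while $\eta_{\calM}(s \cdot m) = 1 \otimes 1 \otimes (s \cdot m)$; these are distinct, so $\eta_{\calM}$ is not $\calS$-linear. Dually your $\varepsilon_{\calN} : \scF\scG(\calN) \to \calN$ fails to be $\calT$-linear. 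Since morphisms in $\calD(\calS\Mo)$ and $\calD(\calT\Mo)$ must be (zig-zags of) maps of $\calS$- resp.\ $\calT$-dg-modules, your $\eta$ and $\varepsilon$ do not define natural transformations of the relevant endofunctors, and the ``routine check'' you appeal to does not go through. What you have constructed are $\calO_X$-linear chain sections/retractions of the genuine unit $\Id \to \scF\scG$ and counit $\scG\scF \to \Id$ of the adjunction $\scG \dashv \scF$. The counit is $\calS$-linear and given explicitly by $s \otimes \xi \otimes m \mapsto \xi(1)\,(s \cdot m)$; your $\eta$ being a chain section of it would allow you to deduce that the counit is a quasi-isomorphism, but you would still need to state and use the adjunction (or at least the $\calS$-linear counit) to get an isomorphism of functors on $\calD(\calS\Mo)$. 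Also, two smaller points on your convergence argument: the Koszul differential preserves internal degree, so ``the filtration of $\scK$ by internal degree'' is a direct-sum decomposition rather than a filtration with extensions (the nontrivial filtration lives on $\scK \otimes \calM$, by $\scK$-internal-degree, where the cross-term involving the $\calS$-action on $\calM$ strictly shifts it); and the boundedness hypothesis yields finiteness in each fixed \emph{internal} degree, not in each fixed cohomological degree, which is what makes the filtration finite.
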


\begin{proof} This proof is again taken from \cite[Theorem A.1.2]{Po}. Recall the ``generalized Koszul complex'' $\calK^{(1)}$ of \cite[\S 2.5]{MRlkd}. (Here, we are in the simpler situation where the vector bundle ``$\calV$'' of \cite{MRlkd} is zero.)

The functors $\scF$ and $\scG$ are clearly adjoint; hence so are $\overline{\scF}$ and $\overline{\scG}$ (see \cite[Lemma 13.6]{Ke}). We show that the adjunction morphism $\overline{\scG} \circ \overline{\scF} \to \Id$ is an isomorphism; the proof for the morphism $\Id \to \overline{\scF} \circ \overline{\scG}$ is similar. Let $\calM$ be an object of $\calC(\calS\Mo)$. Then the homogeneous internal degree components of the cone of the morphism $\scG \circ \scF (\calM) \to \calM$ can be obtained from the negative homogeneous internal degree components of $\calK^{(1)}$ by tensoring with the homogeneous internal degree components of $\calM$ and taking shifts and cones a finite number of times. These negative homogeneous internal degree components of $\calK^{(1)}$ are acyclic by \cite[Lemma 2.5.1]{MRlkd}. Hence it suffices to prove that the tensor product of an acyclic, bounded complex of flat $\calO_X$-modules with any complex of $\calO_X$-modules is acyclic. This fact is well-known, see e.g.~\cite[Proposition 5.7]{Sp}.\end{proof}

Finally we prove that these equivalences restrict to finitely generated objects. From now on, we assume that $X$ is noetherian. For a $\Gm$-equivariant dg-algebra $\calA$, we denote by $\calD^{\fig}(\calA\Mo)$ the subcategory of $\calD(\calA\Mo)$ whose objects have locally finitely generated cohomology (over $H(\calA)$). We let also $\calC \calF \calG^{\qc}(\calA)$ be the category of $\calO_X$-quasi-coherent, locally finitely generated $\Gm$-equivariant $\calA$-dg-modules. We denote by $\calD \calF \calG^{\qc}(\calA)$ the corresponding derived category. The following lemma can be proved as in \cite[Lemma 3.6.1]{MRlkd}.

\begin{lem} \label{lem:fg}

For $\calA=\calR, \calS$ or $\calT$, the natural inclusion $\calC \calF \calG^{\qc}(\calA) \hookrightarrow \calC(\calA\Mo)$ induces an equivalence of categories \[ \calD \calF \calG^{\qc}(\calA) \ \cong \ \calD^{\fig}(\calA\Mo). \]

\end{lem}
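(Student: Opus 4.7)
The plan is to follow closely the argument of \cite[Lemma 3.6.1]{MRlkd}. Since the differentials on $\calR$, $\calS$, $\calT$ are trivial, we have $H(\calA)=\calA$, so local finite generation of cohomology is measured against $\calA$ itself. Full faithfulness is formal once essential surjectivity is established, so the core of the proof is to produce, for any $\calM \in \calC(\calA\Mo)$ with $H(\calM)$ locally finitely generated over $\calA$, a quasi-isomorphism $\calN \to \calM$ with $\calN \in \calC\calF\calG^{\qc}(\calA)$.

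First I would build $\calN$ as an increasing union $\calN = \bigcup_{n \geq 0} \calN_n$ of locally finitely generated quasi-coherent $\calA$-dg-submodules, starting from $\calN_0 = 0$. At each step, using that $X$ is noetherian and that $H(\calM)$ is locally finitely generated over $\calA$, one chooses a coherent locally free $\calO_X$-module $\calV_{n+1}$ and attaches a free cell $\calA \otimes_{\calO_X} \calV_{n+1}$ (with differential modified to land in $\calN_n$) so as both to hit a finite set of cohomology classes of $\calM$ not in the image of $H(\calN_n)$ and to kill a finite set of classes in $\ker(H(\calN_n) \to H(\calM))$. The induction is organized by internal degree, working from the top internal degree of $\calM$ downwards; this ensures that in each fixed internal degree only finitely many cells are attached, so that $\calN$ again belongs to $\calC\calF\calG^{\qc}(\calA)$.

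Full faithfulness is then obtained by applying the same construction to morphisms and homotopies: any roof representing a morphism in $\calD^{\fig}(\calA\Mo)$ between objects of $\calC\calF\calG^{\qc}(\calA)$ can be refined, via mapping cones and the above cell-attachment procedure, to a roof whose middle term belongs to $\calC\calF\calG^{\qc}(\calA)$.

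The main technical point is to verify that this iterative construction preserves local finite generation in the colimit. This uses the bounded-above internal grading both on $\calM$ and on $\calA$: for $\calA=\calS,\calR$, a free cell generated in internal degree $d$ contributes only to internal degrees $\leq d$, so the top-down induction terminates in each fixed internal degree after finitely many steps; for $\calA=\calT$, the situation is easier still since $\calT$ is bounded in internal degree (concentrated in $0, 2, \ldots, 2\,\mathrm{rk}(\calF)$), so local finite generation over $\calT$ is equivalent to coherence over $\calO_X$. Organizing this degree-by-degree bookkeeping is the sole delicate step; the rest of the argument is parallel to \cite[Lemma 3.6.1]{MRlkd}.
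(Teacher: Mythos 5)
The paper gives no proof of its own here, only a pointer to \cite[Lemma 3.6.1]{MRlkd}, so the comparison is really with the standard argument that reference carries out: truncate to a cohomologically bounded complex and then inductively replace quasi-coherent terms by coherent ones, exactly as in the proof that $\calD^b_{\Coh}(\QCoh(X)) \cong \calD^b(\Coh(X))$ for $X$ noetherian (for $\calS$ one first applies the regrading $\xi$ to reduce to $\calR$, which sits in cohomological degree $0$; for $\calT$, one first truncates in internal degree to reach the bounded case).

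Your cell-attachment scheme is a reasonable alternative route, but as written it has a real gap in the termination step. You argue that running the induction top-down in internal degree ensures that only finitely many cells are attached in each fixed internal degree, and you conclude from this that $\calN$ lies in $\calC\calF\calG^{\qc}(\calA)$. That implication is false: having $\calN^i_j$ coherent for every bidegree $(i,j)$ does not make $\calN$ locally finitely generated over $\calA$. Local finite generation requires the cell generators to lie in a \emph{bounded} range of bidegrees, i.e.\ the descending chain of internal degrees in which new generators appear must stop — and your argument gives no reason for it to stop. Indeed, for $\calA=\calR$ a locally finitely generated module is supported in internal degrees unbounded below (e.g.\ $\calR$ itself), so ``top-down in internal degree'' is the wrong induction variable: it need not terminate. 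The finiteness that actually drives the construction is that $H(\calM)$ is bounded in cohomological degree (automatic for $\calR$ since $\calR$ sits in cohomological degree $0$, and hence for $\calS$ after applying $\xi$; for $\calT$ it follows from coherence of $\calT$ over $\calO_X$). One must first pass to a bounded complex by cohomological (and, for $\calT$, internal) truncation — noting that the smart truncations are compatible with the $\calA$-action because $\calA$ has non-positive cohomological degree — and then run the descending induction on cohomological degree; that is what terminates. Until you supply a bound on the internal degrees of the cells, or recast the induction in terms of cohomological degree after a truncation step, the proposed construction does not yield an object of $\calC\calF\calG^{\qc}(\calA)$.
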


Recall the regrading functor \[ \xi : \calC(\calS\mathrm{-Mod}^{\qc}) \to \calC(\calR\mathrm{-Mod}^{\qc}) \] of \cite[\S 3.5]{MRlkd}. It is defined by the condition \[ \xi(\calM)^i_j := \calM^{i-j}_j, \] and it induces equivalences of triangulated categories \begin{equation} \label{eq:equivalence-xi} \calD \calF \calG^{\qc}(\calS) \cong \calD \calF \calG^{\qc}(\calR), \quad \calD^{\fig}(\calS\Mo) \cong \calD^{\fig}(\calR\Mo) \end{equation} (because it respects the condition of being bounded above for the internal grading).

\begin{prop} \label{prop:equivalences-fg}

The equivalences $\overline{\scF}$ and $\overline{\scG}$ restrict to equivalences \[ \calD^{\fig}(\calS\Mo) \ \cong \ \calD^{\fig}(\calT\Mo). \]

\end{prop}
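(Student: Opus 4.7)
The plan is to exploit the already-established equivalences $\overline{\scF}$ and $\overline{\scG}$ between the full derived categories, and to show that they restrict to the subcategories with locally finitely generated cohomology. Since $\overline{\scF}$ and $\overline{\scG}$ are quasi-inverse, it suffices to prove that $\overline{\scF}$ sends $\calD^{\fig}(\calS\Mo)$ into $\calD^{\fig}(\calT\Mo)$; the statement for $\overline{\scG}$ then follows by applying the inverse.

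First, by Lemma \ref{lem:fg}, I may represent any object of $\calD^{\fig}(\calS\Mo)$ by a locally finitely generated dg-module $\calM \in \calC\calF\calG^{\qc}(\calS)$. The question being local on $X$, I may assume $X = \mathrm{Spec}(A)$ is affine noetherian and $\calF$ corresponds to a finitely generated projective $A$-module $F$. The regrading equivalence $\xi$ of \eqref{eq:equivalence-xi} then transports the problem to the setting of classical Koszul duality, in which the source algebra $\calR = \mathrm{S}(F^{\star})$ is concentrated in cohomological degree zero (a noetherian graded-commutative ring), while the target algebra $\calT = \Lambda(F)$ is locally free of finite rank over $\calO_X$.

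The key computation is that $\scF(\calS) \simeq \calO_X$ as $\calT$-dg-modules, which is a reinterpretation of the classical Koszul resolution of $\calO_X$ over $\calS$ by $\calT^{\star} \otimes_{\calO_X} \calS$; moreover, $\calO_X$ is locally finitely generated over $\calT$ because $\calT$ is coherent over $\calO_X$. Since the projective dimension of $\calO_X$ over $\calR$ is bounded by $\rk F$, any locally finitely generated $\calS$-dg-module $\calM$ admits, up to quasi-isomorphism, a finite semi-free resolution by finitely generated free $\calS$-dg-modules. Applying $\overline{\scF}$ and using the key computation then shows that $\overline{\scF}(\calM)$ is quasi-isomorphic to a finite iterated extension of coherent $\calO_X$-modules (viewed as $\calT$-dg-modules via augmentation), which is locally finitely generated over $\calT$.

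The main obstacle is producing the finite semi-free resolution of an arbitrary $\calM \in \calC\calF\calG^{\qc}(\calS)$ compatibly with both the local finite-generation condition and the bounded-above internal-grading convention of $\Mo$. This relies on noetherianity of $\calR$ together with the finite bound on the projective dimension of $\calO_X$ over $\calR$, both consequences of the hypothesis that $\calF$ is a vector bundle of finite rank, plus a careful analysis of the bigrading to ensure that the resolution stays within $\calC\calF\calG^{\qc}(\calS)$ and that the associated spectral sequence computing $H^{*}(\overline{\scF}(\calM))$ collapses into finitely many bidegrees of locally coherent sheaves.
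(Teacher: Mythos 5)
Your approach is genuinely different from the paper's, but it has a real gap that you yourself partly flag at the end.

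You reduce the problem to producing, for each locally finitely generated $\calS$-dg-module $\calM$, a \emph{finite} semi-free resolution by finitely generated free $\calS$-dg-modules, which you justify by saying that the projective dimension of $\calO_X$ over $\calR$ is bounded by $\rk\calF$. This is a non-sequitur: bounding $\mathrm{pd}_{\calR}(\calO_X)$ says nothing about the projective dimension of an arbitrary finitely generated $\calR$-module. Such finite resolutions exist in general only when $\calR$ has finite global dimension, which (locally, $\calR = A[x_1,\dots,x_n]$) requires $A$ to be regular, i.e.\ $X$ to be regular/smooth. But the Proposition is stated under the assumption that $X$ is merely noetherian --- smoothness is only introduced later, in \S\ref{ss:duality}. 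Over a singular noetherian $X$ your resolution simply need not exist, and the argument collapses. (Even granting smoothness, some care is needed about the interaction of the internal and cohomological gradings to see that the resolution lies inside $\calC\calF\calG^{\qc}(\calS)$ and is genuinely finite, which is precisely the step you leave as an acknowledged ``obstacle''.)

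The paper instead works with triangulated generators, which avoids resolutions entirely and works for any noetherian $X$. On the $\calT$-side, since $\calT = \Lambda(\calF)$ is nilpotent, every object of $\calC(\calT\Mo)$ has a finite filtration with trivial $\calT$-action on the associated graded; with Lemma \ref{lem:fg}, this shows $\calD^{\fig}(\calT\Mo)$ is generated by objects of $\calD^b\Coh^{\Gm}(X)$ with trivial $\calT$-action. On the $\calS$-side, the regrading functor and the general fact that $\calD^b\Coh^H(V)$ is generated by pullbacks $\pi^*\calF$ (for a vector bundle $\pi:V\to Y$, \cite[p.~266]{CG}) show that $\calD^{\fig}(\calS\Mo)$ is generated by objects $\calS\otimes_{\calO_X}\calF$. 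The functors $\overline{\scF}$ and $\overline{\scG}$ visibly interchange these two generating families, so they restrict to an equivalence. This is both shorter and strictly more general than your proposed route.
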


\begin{proof} Any object of the category $\calC(\calT\Mo)$ has a finite filtration (as a $\calT$-dg-module) such that $\calT$ acts trivially on the associated graded. Hence, using Lemma \ref{lem:fg}, the category $\calD^{\fig}(\calT\Mo)$ is generated, as a triangulated category, by objects of $\calD^b \Coh^{\Gm}(X)$ (endowed with a trivial $\calT$-action), where $\Gm$ acts trivially on $X$.

On the other hand, we claim that $\calD^{\fig}(\calS\Mo)$ is generated, as a triangulated category, by objects of the form $\calS \otimes_{\calO_X} \calF$, for $\calF$ in $\calD^b \Coh^{\Gm}(X)$. Indeed, using the regrading functor, it is enough to prove the same result for $\calR$-dg-modules. Then this follows from the following general result (see \cite[p.~266, last paragraph]{CG}), using the fact that $\calD \calF \calG(\calR)$ is equivalent to the bounded derived category of $\Gm$-equivariant coherent sheaves on $F$.

\begin{prop}

Let $H$ be an algebraic group, and let $\pi : V \to Y$ be an $H$-equivariant vector bundle. Then the category $\calD^b \Coh^H(V)$ is generated, as a triangulated category, by objects of the form $\pi^* \calF$ for $\calF$ in $\Coh^H(Y)$.

\end{prop}

We have determined a set of generators $\calG_1$, respectively $\calG_2$, for the triangulated category $\calD^{\fig}(\calT\Mo)$, respectively $\calD^{\fig}(\calS\Mo)$. By definition, $\overline{\scF}$ and $\overline{\scG}$ induce equivalences bewteen the subcategories whose objects are in $\calG_1$ and $\calG_2$. Hence they induce an equivalence $\calD^{\fig}(\calT\Mo) \cong \calD^{\fig}(\calS\Mo)$.\end{proof}

Let us now introduce the following notation: \[ \DGCoh^{\gr}(F^{\bot} \rcap_{E^*} X) \ := \ \calD^{\fig}(\calT\Mo). \] Comparing Lemma \ref{lem:fg} and \cite[Proposition 3.3.4]{Ri}, we see that this category is equivalent to that denoted similarly in \cite[(2.3.8)]{Ri}. There is a natural forgetful functor \begin{equation} \label{eq:For-DGCoh-rcap} \For : \DGCoh^{\gr}(F^{\bot} \rcap_{E^*} X) \to \DGCoh(F^{\bot} \rcap_{E^*} X). \end{equation}

We define also \[ \DGCoh^{\gr}(F) \ := \ \calD^{\fig}(\calS\Mo). \] Consider the action of $\Gm$ on $F$ where $t \in \Gm$ acts by multiplication by $t^2$ in the fibers. By the second equivalence in \eqref{eq:equivalence-xi}, the category $\DGCoh^{\gr}(F)$ is equivalent to $\calD^b \Coh^{\Gm}(F)$. In particular, there is a natural forgetful functor \begin{equation} \label{eq:For-DGCoh-F} \For : \DGCoh^{\gr}(F) \to \calD^b \Coh(F). \end{equation}

Let us prove that the category $\DGCoh^{\gr}(F)$ is equivalent to the category denoted similarly in \cite[(2.3.6)]{Ri}. Consider the category $\calD^{+,\qc,\fig}_{\Gm}(X,\calS)$ of \cite{Ri}. By \cite[(2.3.5)]{Ri}, there is a natural functor $\phi : \calD^{+,\qc,\fig}_{\Gm}(X,\calS) \to \calD^{\fig}(\calS\Mo)$. Moreover, the following diagram commutes: \[ \xymatrix@C=40pt{\calD^{+,\qc,\fig}_{\Gm}(X,\calS) \ar[rd]^-{\psi} \ar[d]_-{\phi} & \\ \calD^{\fig}(\calS\Mo) \ar[r]^-{\overline{\scF}} & \DGCoh^{\gr}(F^{\bot} \rcap_{E^*} X), } \] where $\psi$ denotes the equivalence of \cite[(2.3.1)]{Ri}. As both $\overline{\scF}$ and $\psi$ are equivalences, so is $\phi$.

Hence we have obtained the following more general version of \cite[Theorem 2.3.10]{Ri}. Note also that the new definition of the category $\DGCoh^{\gr}(F)$ would allow to simplify (and generalize) the constructions of \cite[\S\S 2.4, 2.5 and 4.2]{Ri}.

\begin{thm} \label{thm:lkd}

There exists an equivalence of triangulated categories \[ \kappa : \DGCoh^{\gr}(F) \xrightarrow{\sim} \DGCoh^{\gr}(F^{\bot} \rcap_{E^*} X), \] called \emph{linear Koszul duality}.

\end{thm}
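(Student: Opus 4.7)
The statement is essentially a repackaging of results already established in the preceding paragraphs, so the proof should be short. Here is the plan.

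The idea is to simply define $\kappa$ to be the restriction of the equivalence $\overline{\scF} : \calD(\calS\Mo) \xrightarrow{\sim} \calD(\calT\Mo)$ to the subcategories of objects with locally finitely generated cohomology. By Proposition \ref{prop:equivalences-fg}, this restriction is an equivalence $\calD^{\fig}(\calS\Mo) \xrightarrow{\sim} \calD^{\fig}(\calT\Mo)$, with quasi-inverse the restriction of $\overline{\scG}$. Unwinding the definitions
\[ \DGCoh^{\gr}(F) := \calD^{\fig}(\calS\Mo), \qquad \DGCoh^{\gr}(F^{\bot} \rcap_{E^*} X) := \calD^{\fig}(\calT\Mo) \]
then gives the asserted equivalence.

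Since the body of the argument is already contained in the lemmas and propositions above, nothing further needs to be proved here. The only point that deserves explicit mention is to recall why the two categories appearing in the statement are indeed the ones defined via $\calS$ and $\calT$: for $\DGCoh^{\gr}(F^{\bot} \rcap_{E^*} X)$ this is the sentence following Proposition \ref{prop:equivalences-fg} comparing Lemma \ref{lem:fg} with \cite[Proposition 3.3.4]{Ri}, and for $\DGCoh^{\gr}(F)$ this is the commutative triangle involving $\phi$, $\psi$ and $\overline{\scF}$, from which $\phi$ is deduced to be an equivalence. Both comparisons have already been spelled out in the excerpt.

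There is no real obstacle: the theorem is a labeled statement of a conclusion that has already been assembled, and the proof amounts to one or two lines invoking Proposition \ref{prop:equivalences-fg} together with the definitions of $\DGCoh^{\gr}(F)$ and $\DGCoh^{\gr}(F^{\bot} \rcap_{E^*} X)$. The actual content — exactness of $\scF, \scG$, the fact that they induce quasi-inverse equivalences on the full derived categories, and the reduction to finitely generated objects via generators — has been carried out in the preceding pages, and it is only this packaging that remains.
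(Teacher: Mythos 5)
Your proposal is correct and is exactly the paper's implicit argument: the theorem is a summary of the preceding page, and $\kappa$ is precisely $\overline{\scF}$ restricted to the subcategories of dg-modules with locally finitely generated cohomology, which is an equivalence by Proposition \ref{prop:equivalences-fg} and the definitions $\DGCoh^{\gr}(F) = \calD^{\fig}(\calS\Mo)$ and $\DGCoh^{\gr}(F^{\bot} \rcap_{E^*} X) = \calD^{\fig}(\calT\Mo)$. Nothing is missing.
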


\begin{remark} \label{rk:shifts-grading} The functor $\kappa$ commutes with internal shifts. However, we have rather $\xi(\calM \langle m \rangle)=\xi(\calM)\langle m \rangle [-m]$ for $m \in \bbZ$. Hence the functor $\For$ of \eqref{eq:For-DGCoh-F} satisfies $\For(\calM \langle m \rangle) = \For(\calM)[-m]$.
\end{remark}

\subsection{Homological duality} \label{ss:duality}

For simplicity, from now on we assume that $X$ is a smooth variety over an algebraically closed field $\bk$. We have seen in \S \ref{ss:lkd} that the category $\DGCoh^{\gr}(F)$ is canonically equivalent to $\calD^b \Coh^{\Gm}(F)$. Now it is well-known that the functor \[ \bbD_F : \left\{ \begin{array}{ccc} \calD^b \Coh^{\Gm}(F) & \to & \calD^b \Coh^{\Gm}(F) \\ \calM & \mapsto & R\sheafHom_{\calO_{F}}(\calM,\calO_F) \end{array} \right. \] is an equivalence of categories, such that $\bbD_F \circ \bbD_F \cong \Id$. We denote by \[ \bbD_{\calS} : \ \DGCoh^{\gr}(F) \ \xrightarrow{\sim} \ \DGCoh^{\gr}(F) \] the induced equivalence.

Now we define a duality functor \[ \bbD_{\calT} : \ \DGCoh^{\gr}(F^{\bot} \rcap_{E^*} X) \ \xrightarrow{\sim} \ \DGCoh^{\gr}(F^{\bot} \rcap_{E^*} X). \]  Consider the functor \[ \rmD_{\calT} : \left\{ \begin{array}{ccc} \calC \calF \calG^{\qc}(\calT) & \to & \calC \calF \calG^{\qc}(\calT)^{\mathrm{op}} \\ \calM & \mapsto & \sheafHom_{\calT}(\calM,\calT) \end{array} \right. . \] Here, the left $\calT$-action is induced by the left multiplication of $\calT$ on itself.

\begin{lem} \label{lem:duality-T}

The functor $\rmD_{\calT}$ admits a left derived functor \[ \bbD_{\calT} : \DGCoh^{\gr}(F^{\bot} \rcap_{E^*} X) \to \DGCoh^{\gr}(F^{\bot} \rcap_{E^*} X), \] which is an equivalence of categories such that $\bbD_{\calT} \circ \bbD_{\calT} \cong \Id$.

\end{lem}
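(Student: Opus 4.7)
The plan is to construct $\bbD_{\calT}$ as the left derived functor of $\rmD_{\calT}$ via semifree resolutions, exploiting that $\calT = \Lambda(\calF)$ is a sheaf of $\Gm$-graded $\calO_X$-algebras of finite total rank (so the bounded-above condition on the internal degree is automatic for finitely generated dg-modules, and the relevant derived functor lands in the same category).

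First, I would establish a convenient class of ``K-projective'' resolutions. The claim is that every object $\calM$ of $\calD^{\fig}(\calT\Mo)$ admits a quasi-isomorphism $\calP \xrightarrow{\sim} \calM$ where $\calP$ is a semifree $\calT$-dg-module built, in a transfinite filtration, out of direct summands of the form $\calT \otimes_{\calO_X} \calE \langle j \rangle [i]$ with $\calE$ a locally free coherent $\calO_X$-module. Such resolutions are constructed by the usual iterated cone argument (kill cohomology degree by degree, using smoothness of $X$ so that $\Coh(X)$ has enough locally free objects), exactly as in the construction of resolutions underlying Lemma \ref{lem:fg} and Proposition \ref{prop:equivalences-fg}. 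The finite generation of $\calM$ allows one to take $\calE$ coherent at each step.

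Next, on such semifree pieces one computes directly
\[
\rmD_{\calT}(\calT \otimes_{\calO_X} \calE) \ \cong \ \sheafHom_{\calO_X}(\calE, \calT) \ \cong \ \calT \otimes_{\calO_X} \calE^{\star},
\]
where the second isomorphism uses that $\calE$ is locally free of finite rank. In particular $\rmD_{\calT}$ sends objects of the generating class into itself. One then checks that $\rmD_{\calT}$ takes quasi-isomorphisms between such semifree dg-modules to quasi-isomorphisms: this reduces, via the filtration, to the fact that the $\calO_X$-linear dual $(-)^{\star}$ preserves quasi-isomorphisms between bounded complexes of coherent locally free sheaves on the smooth variety $X$. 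Setting $\bbD_{\calT}(\calM) := \rmD_{\calT}(\calP)$ for $\calP$ a resolution as above, and noting independence of the choice by a standard argument, yields a well-defined triangulated functor $\bbD_{\calT} : \DGCoh^{\gr}(F^{\bot} \rcap_{E^*} X) \to \DGCoh^{\gr}(F^{\bot} \rcap_{E^*} X)^{\op}$.

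Finally, for the involutivity $\bbD_{\calT} \circ \bbD_{\calT} \cong \Id$, observe that $\rmD_{\calT}(\calT \otimes_{\calO_X} \calE^{\star}) \cong \calT \otimes_{\calO_X} \calE^{\star \star} \cong \calT \otimes_{\calO_X} \calE$ by reflexivity of coherent locally free sheaves, and this isomorphism is natural. Since the generating class is preserved by $\bbD_{\calT}$, this identification propagates through the filtration to give a natural isomorphism $\bbD_{\calT} \circ \bbD_{\calT} \cong \Id$ on all of $\DGCoh^{\gr}(F^{\bot} \rcap_{E^*} X)$; in particular $\bbD_{\calT}$ is an equivalence. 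The part I expect to require most care is the first step: constructing semifree resolutions that simultaneously respect the $\Gm$-equivariance, the $\calO_X$-quasi-coherency, and the local finite generation condition over $\calT$, and verifying that the resulting derived functor indeed preserves $\calD^{\fig}$ rather than merely landing in some larger derived category.
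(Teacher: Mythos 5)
Your proof is correct, and its skeleton matches the paper's: find a class of dg-modules adapted to $\rmD_{\calT}$, show they exist abundantly, compute $\rmD_{\calT}$ on them, and deduce both the existence of the derived functor and the involutivity. But the adapted class you choose is genuinely different. You build semifree resolutions (filtered by pieces $\calT\otimes_{\calO_X}\calE\langle j\rangle[i]$ with $\calE$ locally free coherent), compute $\rmD_{\calT}(\calT\otimes_{\calO_X}\calE)\cong\calT\otimes_{\calO_X}\calE^{\star}$ on the associated graded pieces, and propagate through the filtration. The paper instead invokes Lemma \ref{lem:D_T-resolutions}, which produces resolutions $\calP$ whose bihomogeneous components $\calP^i_j$ are locally free of finite rank --- a strictly weaker structural condition than semifreeness, hence easier to arrange --- and then uses the key isomorphism \eqref{eq:D_T}, valid for \emph{every} object of $\calC\calF\calG^{\gr}(\calT)$, which rewrites $\rmD_{\calT}$ as a termwise $\calO_X$-linear dual twisted by $\calL[n]\langle 2n\rangle$. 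On such a $\calP$ this is an exact, termwise, reflexive operation, so split objects, exactness, and $\rmD_{\calT}\circ\rmD_{\calT}\cong\Id$ all drop out at once with no filtration argument. The identity \eqref{eq:D_T} encodes that $\calT=\Lambda(\calF)$ is self-injective over $\calO_X$ up to the twist $\calL[n]\langle 2n\rangle$ (cf. \eqref{eq:coind-T}); your computation on free pieces is the specialisation of this fact to $\calT$-free modules, but working only on the free pieces obliges you to carry the natural transformation $\Id\to\rmD_{\calT}\rmD_{\calT}$ through a possibly transfinite filtration and to verify local finiteness in each bidegree. That is doable here (the internal grading is bounded above and $\calT$ has finite rank), but it is exactly the extra bookkeeping that \eqref{eq:D_T} is designed to avoid --- and \eqref{eq:D_T} is reused immediately afterwards in the proof of Proposition \ref{prop:compatibility-lkd-duality}, which is another reason the paper isolates it. So: your route works, but the paper's is shorter and yields a reusable structural statement; if you retain the semifree approach you should spell out the local finiteness of the filtration that makes the propagation step legitimate.
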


Before giving the proof of this lemma, we give an alternative definition of the functor $\rmD_{\calT}$. Let $n=\rk(\calF)$, and $\calL:=\Lambda^{n}(\calF)$ (considered as a line bundle on $X$, in internal degree $0$). For any $i=0, \cdots, n$, consider the morphism \[ \left\{ \begin{array}{ccc} \Lambda^{i}(\calF) \otimes_{\calO_X} \calL^{\otimes -1} & \to & \sheafHom_{\calO_X}(\Lambda^{n-i}(\calF), \calO_X) \\ (x \otimes a) & \mapsto & \bigl( y \mapsto (-1)^{n \cdot |y|} (x \wedge y) \otimes a \bigr) \end{array} . \right. \] This collection of morphisms induces an isomorphism of $\Gm$-equivariant $\calT$-dg-modules \begin{equation} \label{eq:coind-T} \calT \otimes_{\calO_X} \calL^{\otimes -1} \ \xrightarrow{\sim} \ \Coind_{\calT}(\calO_X)[n] \langle 2n \rangle \end{equation} (see \cite[\S 1.2]{Ri} for the definition of the coinduction functor). Using this isomorphism and \cite[\S 1.2]{Ri}, we obtain isomorphisms of $\calT$-dg-modules, for any $\calM$ in $\calC \calF \calG^{\gr}(\calT)$: \begin{multline} \label{eq:D_T} \rmD_{\calT} (\calM) \ \cong \ \sheafHom_{\calT}(\calM,\Coind_{\calT}(\calO_X)) \otimes_{\calO_X} \calL[n]\langle 2n \rangle \\ \cong \ \sheafHom_{\calO_X}(\calM,\calO_X) \otimes_{\calO_X} \calL[n]\langle 2n \rangle. \end{multline} Here, the $\calT$-module structure on $\sheafHom_{\calO_X}(\calM,\calO_X)$ is given by $(t \cdot \phi)(m)=(-1)^{|t| \cdot |\phi|}(t \cdot m)$.

The following lemma can be proved exactly as in \cite[Proposition 3.1.1]{MRlkd}.

\begin{lem} \label{lem:D_T-resolutions}

For every $\calM$ in $\calC \calF \calG^{\gr}(\calT)$, there exists an object $\calP$ of $\calC \calF \calG^{\gr}(\calT)$ such that for every $i$ and $j$ the $\calO_X$-module $\calP^i_j$ is locally free of finite rank, and a (surjective) quasi-isomorphism $\calP \xrightarrow{\qis} \calM$.

\end{lem}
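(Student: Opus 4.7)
The plan is to imitate the classical construction of a locally free resolution of a finitely generated module, adapted to the dg-bigraded setting over the smooth base $X$.

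First, I would record the following structural observation. Since $\calF$ sits in bidegree $(-1,2)$, the augmentation ideal $\calT^{>0}$ of $\calT=\Lambda(\calF)$ has strictly positive internal degree. Combined with local finite generation of $\calM$ over $\calT$ and the fact that $\calT$ is finitely generated and locally free as an $\calO_X$-module, this forces $\calM$ to live in a bounded range $[j_1,j_0]$ of internal degrees, with every bihomogeneous component $\calM^i_j$ a coherent $\calO_X$-module (here one uses that $X$ is noetherian).

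Next, I would proceed by descending induction on the top nonvanishing internal degree. Because $\calT^{>0}$ annihilates $\calM^{\ast}_{j_0}$, the latter is simply a bounded complex of coherent $\calO_X$-modules. Using the smoothness of $X$ (which supplies locally free resolutions of coherent sheaves), I would choose a resolution $\calV_0\xrightarrow{\qis}\calM^{\ast}_{j_0}$ by finite-rank locally free $\calO_X$-modules. Placing $\calV_0$ in internal degree $j_0$, the surjection onto $\calM^{\ast}_{j_0}$ lifts, by freeness and standard obstruction-free extension, to a $\Gm$-equivariant $\calT$-dg-module map
\[
\calT\otimes_{\calO_X}\calV_0\ \longrightarrow\ \calM
\]
which is a quasi-isomorphism in internal degree $j_0$ and surjective there.

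I would then take the mapping cone and iterate. The cone again lies in $\calC\calF\calG^{\gr}(\calT)$, and its nonzero internal degrees are confined to $[j_1,j_0-1]$. After at most $j_0-j_1+1$ iterations the process terminates, and splicing the successive $\calT\otimes_{\calO_X}\calV_k$ into a single dg-module $\calP$ yields the required resolution: each $\calP^i_j$ is a finite direct sum of terms of the form $\calT^a_b\otimes_{\calO_X}\calV_k^c$, hence locally free of finite rank, and the assembled comparison map $\calP\xrightarrow{\qis}\calM$ is a surjective quasi-isomorphism by construction.

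The main obstacle is to guarantee that the finite-rank and local-freeness conditions are preserved through the iteration; this uses smoothness of $X$ (so that each $\calM^{\ast}_{j_0}$ admits a locally free resolution by finite-rank sheaves) and noetherianity (for coherence of kernels at each step). The $\Gm$-equivariance is automatic because all constructions respect the bigrading, and the dg-module extension in each inductive step is formal because we resolve by $\calT$-free modules. The argument is the bigraded analog of the classical Hilbert syzygy construction, and proceeds as in \cite[Proposition 3.1.1]{MRlkd}.
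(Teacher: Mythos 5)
Your plan — descending induction on the top nonzero internal degree $j_0$ of $\calM$, resolving the top slice and iterating with mapping cones — does not work as written, and the gap is concrete.

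The critical false claim is that the cone $C_0$ of $\calT\otimes_{\calO_X}\calV_0\to\calM$ has internal degrees confined to $[j_1,j_0-1]$. Two things go wrong. First, in internal degree $j_0$ the cone is $\calV_0[1]\oplus\calM^{\ast}_{j_0}$: it is acyclic but certainly not zero, so it is not ``confined below $j_0$''. Second, and more seriously, $\calT\otimes_{\calO_X}\calV_0$ has nonzero components in internal degrees $j_0,j_0+2,\dots,j_0+2n$ (recall $\Lambda^k\calF$ sits in internal degree $2k$), while your map is zero in all internal degrees $>j_0$ because $\calM$ vanishes there. Hence the cone acquires \emph{new} nonzero components — in fact non-acyclic ones, a shift of $\calT^{>0}\otimes\calV_0$ — in internal degrees \emph{strictly larger} than $j_0$. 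Far from shrinking, the range of internal degrees widens at every step, and the induction never terminates. This is not a cosmetic issue but a direction mismatch: a free $\calT$-dg-module $\calT\otimes\calV$ with generators $\calV$ in a given internal degree $j$ is supported in internal degrees $\geq j$, so placing generators at the \emph{top} internal degree of $\calM$ cannot produce a surjection onto $\calM$, and killing the top slice by this device is structurally impossible.

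A correct argument must go the other way: produce a genuine \emph{surjection} of $\Gm$-equivariant $\calT$-dg-modules $\calQ\twoheadrightarrow\calM$ whose bihomogeneous components are locally free of finite rank (this requires both free ``sphere'' cells and contractible ``disk'' cells to hit non-cycles), observe that the kernel is again an object of $\calC\calF\calG^{\gr}(\calT)$ supported in the same window of internal degrees, iterate, and then invoke the finite Tor-dimension of coherent sheaves over the smooth noetherian base $X$ to truncate the process after finitely many steps; alternatively one can assemble a resolution slice-by-slice across the internal-degree filtration $\calM_{\geq j}$ via a horseshoe-lemma type lifting. Either way, the place where smoothness of $X$ enters is termination of the iteration, not (as in your sketch) the one-step replacement of the top slice. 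This is the scheme carried out in \cite[Proposition 3.1.1]{MRlkd}, to which the paper defers.
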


Finally we can prove Lemma \ref{lem:duality-T}.

\begin{proof}[Proof of Lemma {\rm \ref{lem:duality-T}}] Using isomorphism \eqref{eq:D_T}, it is clear that any object $\calP$ as in Lemma \ref{lem:D_T-resolutions} is split on the left for the functor $\rmD_{\calT}$. This lemma asserts that there are enough such objects in $\calC \calF \calG^{\gr}(\calT)$, which implies the existence of the derived functor.

It is also follows from \eqref{eq:D_T} that $\rmD_{\calT} \circ \rmD_{\calT} (\calP) \cong \calP$ naturally for any object $\calP$ as in Lemma \ref{lem:D_T-resolutions}. Hence $\bbD_{\calT} \circ \bbD_{\calT} \cong \Id$.\end{proof}

\subsection{Compatibility} 

In this subsection we prove that the Koszul duality $\kappa$ is compatible with the duality equivalences $\bbD_{\calT}$ and $\bbD_{\calS}$. More precisely, we prove the following. Recall the notation $n$, $\calL$ introduced in \S \ref{ss:duality}

\begin{prop} \label{prop:compatibility-lkd-duality}

Consider the following diagram of equivalences \[ \xymatrix@C=30pt{ \DGCoh^{\gr}(F) \ar[r]^-{\kappa}_-{\sim} \ar[d]_-{\bbD_{\calS}}^-{\wr} & \DGCoh^{\gr}(F^{\bot} \rcap_{E^*} X) \ar[d]^-{\bbD_{\calT}}_-{\wr} \\ \DGCoh^{\gr}(F) \ar[r]^-{\kappa}_-{\sim} & \DGCoh^{\gr}(F^{\bot} \rcap_{E^*} X). } \] For any $\calM$ in $\DGCoh^{\gr}(F)$ there is a functorial isomorphism \[ \bbD_{\calT} \circ \kappa(\calM) \ \cong \ \kappa \circ \bbD_{\calS}(\calM) \otimes_{\calO_X} \calL \langle 2n \rangle [n]. \]

\end{prop}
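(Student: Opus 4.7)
The plan is to compute both sides of the asserted isomorphism on a well-chosen class of resolutions and verify they agree. First I would establish an $\calS$-analogue of Lemma~\ref{lem:D_T-resolutions}: every $\calM$ in $\calC\calF\calG^{\qc}(\calS)$ admits a quasi-isomorphism $\calP\xrightarrow{\qis}\calM$ with $\calP$ of the form $\calS\otimes_{\calO_X}\calE^{\bullet}$, where $\calE^{\bullet}$ is bounded above for the internal degree and each component $\calE^{i,j}$ is locally free of finite rank over $\calO_X$. Such a $\calP$ is split on the left for $\sheafHom_{\calS}(-,\calS)$, so it represents $\bbD_{\calS}(\calM)$; moreover, its bidegree components being locally free of finite rank, $\kappa(\calP)=\calT^{\star}\otimes_{\calO_X}\calP$ is of the type in Lemma~\ref{lem:D_T-resolutions}, and \eqref{eq:D_T} then computes $\bbD_{\calT}\kappa(\calM)$. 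Since $\kappa$ is exact and the constructions are functorial, the proposition reduces to producing a natural isomorphism between these two explicit representatives.

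Next I would make both sides fully explicit on such a $\calP$. The formula \eqref{eq:D_T} gives
\[
\bbD_{\calT}\bigl(\kappa(\calP)\bigr) \;\cong\; \sheafHom_{\calO_X}\bigl(\calT^{\star}\otimes_{\calO_X}\calP,\,\calO_X\bigr) \otimes_{\calO_X}\calL[n]\langle 2n\rangle,
\]
which by bidegree-wise $\calO_X$-duality, together with $\sheafHom_{\calO_X}(\calT^{\star},\calO_X)\cong\calT$, rewrites as $\calT\otimes_{\calO_X}\calS^{\star}\otimes_{\calO_X}(\calE^{\bullet})^{\star}\otimes_{\calO_X}\calL[n]\langle 2n\rangle$, with a specific Koszul-type differential and twisted $\calT$-action. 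On the other hand, the free-module identity $\sheafHom_{\calS}(\calS\otimes_{\calO_X}\calE^{\bullet},\calS)\cong\calS\otimes_{\calO_X}(\calE^{\bullet})^{\star}$ represents $\bbD_{\calS}(\calP)$, and applying $\kappa$ and then twisting gives
\[
\kappa\bigl(\bbD_{\calS}(\calP)\bigr)\otimes_{\calO_X}\calL[n]\langle 2n\rangle \;\cong\; \calT^{\star}\otimes_{\calO_X}\calS\otimes_{\calO_X}(\calE^{\bullet})^{\star}\otimes_{\calO_X}\calL[n]\langle 2n\rangle.
\]
Using the $\calT$-equivariant isomorphism $\calT^{\star}\cong\calT\otimes_{\calO_X}\calL^{\otimes -1}[-n]\langle -2n\rangle$ from \eqref{eq:coind-T} to rewrite either factor, both expressions are seen to be the same Koszul-type tensor product bidegree by bidegree, with the output shift $\calL[n]\langle 2n\rangle$ in the statement of the proposition precisely accounting for the asymmetry encoded in \eqref{eq:coind-T} between the self-dual symmetric algebra $\calS$ and the non-self-dual exterior algebra $\calT$.

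The matching of differentials and $\calT$-actions then comes from the self-duality of the Koszul pairing $\calF\otimes_{\calO_X}\calF^{\star}\to\calO_X$: both the twisted $\calT$-action on $\bbD_{\calT}(\kappa(\calP))$ dictated by \eqref{eq:D_T} and the tautological $\calT$-action on $\kappa\bigl(\bbD_{\calS}(\calP)\bigr)$ are controlled by this pairing, as is the Koszul-type interaction differential of $\scF$ on each side; the $\calO_X$-linear duality swapping the pairs $(\calT^{\star},\calS)$ and $(\calT,\calS^{\star})$ respects these structures by construction. Functoriality in $\calM$ and independence of the chosen $\calP$ follow formally, since any two resolutions of the stated form are related by a chain-homotopy equivalence of $\calS$-dg-modules.

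The hard part will be the bookkeeping: tracking the bigrading conventions, the signs (in particular the $(-1)^{|t|\cdot|\phi|}$ sign on the twisted $\calT$-action in \eqref{eq:D_T}), the shifts caused by the regrading functor $\xi$ (cf.~Remark~\ref{rk:shifts-grading}) relating $\bbD_{\calS}$ to the Serre-type duality on $F$, and the explicit form of the Koszul differential of $\scF$ recalled from \cite[\S 2.1]{Ri}. The combinatorics of these shifts is exactly what forces the specific twist $\calL\langle 2n\rangle[n]$ in the statement; I expect all other steps to be straightforward once the conventions are pinned down.
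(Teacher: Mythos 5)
Your overall strategy — compute both sides on a class of explicit, componentwise locally free models, then match up the resulting Koszul-type complexes via \eqref{eq:coind-T} and \eqref{eq:D_T} — is the right spirit, and you have correctly isolated that the twist $\calL\langle 2n\rangle[n]$ comes from \eqref{eq:coind-T}. However, you go in the opposite direction from the paper ($\kappa$ from the $F$-side, versus $\kappa^{-1}$ from the $F^{\bot}\rcap_{E^*}X$-side), and this direction runs into a genuine problem at the very step you flag as ``bookkeeping.''

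Concretely: starting from $\calP=\calS\otimes_{\calO_X}\calE^{\bullet}$, your two intermediate objects are
\[
\bbD_{\calT}(\kappa(\calP))\ \cong\ \calT\otimes_{\calO_X}\calS^{\star}\otimes_{\calO_X}(\calE^{\bullet})^{\star}\otimes_{\calO_X}\calL[n]\langle 2n\rangle,
\qquad
\kappa(\bbD_{\calS}(\calP))\otimes_{\calO_X}\calL[n]\langle 2n\rangle\ \cong\ \calT^{\star}\otimes_{\calO_X}\calS\otimes_{\calO_X}(\calE^{\bullet})^{\star}\otimes_{\calO_X}\calL[n]\langle 2n\rangle.
\]
Feeding \eqref{eq:coind-T} into the right-hand one turns $\calT^{\star}\otimes\calL[n]\langle 2n\rangle$ into $\calT$, so matching the two would force a bidegree-wise isomorphism $\calS^{\star}\otimes_{\calO_X}\calL[n]\langle 2n\rangle\cong\calS$. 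This is false: with $\calF^{\star}$ placed in internal degree $-2$, the bigraded object $\calS$ is supported in internal degrees $\le 0$ while the graded dual $\calS^{\star}$ is supported in internal degrees $\ge 0$, and the finite shift $\langle 2n\rangle$ cannot reconcile this. So your two candidate representatives are \emph{not} ``the same Koszul-type tensor product bidegree by bidegree''; they are merely quasi-isomorphic, each being a Koszul resolution of $(\calE^{\bullet})^{\star}\otimes\calL[n]\langle 2n\rangle$ as a trivial $\calT$-dg-module. To close the argument from your side you would need to exhibit a \emph{natural} quasi-isomorphism between them (e.g.\ through the common reduction $(\calE^{\bullet})^{\star}\otimes\calL[n]\langle 2n\rangle$ using the Koszul quasi-isomorphism of \cite[Lemma 2.5.1]{MRlkd}), and then check it respects the $\calT$-actions — a genuinely nontrivial additional step, not mere bookkeeping. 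You would also need to first prove an $\calS$-analogue of Lemma~\ref{lem:D_T-resolutions} and justify that $\bbD_{\calS}$ (which the paper defines via the equivalence with $\calD^b\Coh^{\Gm}(F)$ and $R\sheafHom_{\calO_F}(-,\calO_F)$, not as a derived $\sheafHom_{\calS}(-,\calS)$) agrees with $\sheafHom_{\calS}(-,\calS)$ on your resolutions.

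The paper works in the opposite direction precisely to avoid this. Starting from a $\calT$-side object $\calP$ as in Lemma~\ref{lem:D_T-resolutions}, the identity $\kappa^{-1}=\scG=\calS\otimes_{\calO_X}(-)$ gives $\kappa^{-1}\bbD_{\calT}(\calP)\cong\calS\otimes_{\calO_X}\calP^{\star}\otimes\calL[n]\langle 2n\rangle$, while the free-module computation gives $\bbD_{\calS}(\calS\otimes_{\calO_X}\calP)\cong\calS\otimes_{\calO_X}\calP^{\star}$; both sides carry the factor $\calS$ (never $\calS^{\star}$), so they coincide on the nose and one only has to check the Koszul differentials agree. This asymmetry between $\scG$ and $\scF$ is the whole point: $\scG$ of anything is $\calS\otimes(-)$, and $\bbD_{\calS}$ of a free $\calS$-module cancels the internal Hom against the $\calS$-factor, so no graded dual of the infinite-dimensional algebra ever appears.
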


\begin{proof} In fact we will rather work with the equivalence $\kappa^{-1}$. Let $\calP$ be an object of $\DGCoh^{\gr}(F^{\bot} \rcap_{E^*} X)$ which satisfies the assumptions of Lemma \ref{lem:D_T-resolutions}. Then we have, using \eqref{eq:D_T}, \[ \kappa^{-1} \circ \bbD_{\calT}(\calP) \ \cong \ \calS \otimes_{\calO_X} \sheafHom_{\calO_X}(\calP,\calO_X) \otimes_{\calO_X} \calL [n] \langle 2n \rangle. \] The differential on the right hand side is a Koszul differential. On the other hand, we have \[ \bbD_{\calS} \circ \kappa^{-1}(\calP) \cong \bbD_{\calS}(\calS \otimes_{\calO_X} \calP) \cong \calS \otimes_{\calO_X} \sheafHom_{\calO_X}(\calP,\calO_X). \] Again, the differential on the right hand side is a Koszul differential. The result follows.\end{proof}

\subsection{Duality on $F^{\bot} \rcap_{E^*} X$ and $F^{\bot}$} \label{ss:duality-Fbot}

To finish this section, we study the relation between our equivalence $\bbD_{\calT}$ and the standard duality on $F^{\bot}$.

Let us consider the functor $\bbD_{F^{\bot}} : \ \calD^b \Coh^{\Gm}(F^{\bot}) \xrightarrow{\sim} \calD^b \Coh^{\Gm}(F^{\bot})$. Let also $p: F^{\bot} \rcap_{E^*} X \to F^{\bot}$ be the natural morphism of dg-schemes, and let \[ Rp_* : \ \DGCoh^{\gr}(F^{\bot} \rcap_{E^*} X) \ \to \ \calD^b \Coh^{\Gm}(F^{\bot}) \] be the associated direct image functor (see \cite[\S 1.8]{Ri}). To make this functor more explicit, it is better to use a different realization of the category $\DGCoh^{\gr}(F^{\bot} \rcap_{E^*} X)$. Namely, using the Koszul resolution \[ \mathrm{S}(\calE) \otimes_{\calO_X} \Lambda(\calE) \ \xrightarrow{\qis} \ \calO_X, \] one can realize the dg-sheaf of functions on $F^{\bot} \rcap_{E^*} X$ as the dg-algebra \[ \calQ := \Lambda_{\calO_{F^{\bot}}} (p_{F^{\bot}}^* \calE), \] where $p_{F^{\bot}} : F^{\bot} \to X$ is the projection. (Here the differential is \emph{not} trivial.) Then the category $\DGCoh^{\gr}(F^{\bot} \rcap_{E^*} X)$ is naturally equivalent to the subcategory of the derived category of $\Gm$-equivariant quasi-coherent $\calQ$-dg-modules whose cohomology is locally finitely generated. We denote the latter category by $\calD^{\fig}(\calQ\mathrm{-Mod}^{\qc})$. One can easily check that there is well-defined equivalence \[ \bbD_{\calQ} : \calD^{\fig}(\calQ\mathrm{-Mod}^{\qc}) \to \calD^{\fig}(\calQ\mathrm{-Mod}^{\qc}) \] which is obtained as the derived functor of the functor $\calM \mapsto \sheafHom_{\calQ}(\calM,\calQ)$, and which corresponds to $\bbD_{\calT}$ under the equivalence mentioned above.

Under the equivalence $\DGCoh^{\gr}(F^{\bot} \rcap_{E^*} X) \cong \calD^{\fig}(\calQ\mathrm{-Mod}^{\qc})$, the functor $Rp_*$ is simply the restriction functor from $\calQ$-dg-modules to complexes of $\calO_{F^{\bot}}$-modules.

Let $m$ be the rank of $\calE$, and let $\calK:=\Lambda^m(\calE)$, considered as a line bundle on $X$ in internal degree $0$.

\begin{lem} \label{lem:duality-Fbot}

For any $\calM$ in $\DGCoh^{\gr}(F^{\bot} \rcap_{E^*} X)$, there is a functorial isomorphism \[ Rp_* \circ \bbD_{\calT}(\calM) \ \cong \ \bigl( \bbD_{F^{\bot}} \circ Rp_*(\calM) \bigr) \otimes_{\calO_{F^{\bot}}} p_{F^{\bot}}^* \calK[m] \langle 2m \rangle. \]

\end{lem}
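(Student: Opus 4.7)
The plan is to mimic the proof of Lemma \ref{lem:duality-T} at the level of the realization $\DGCoh^{\gr}(F^{\bot} \rcap_{E^*} X) \cong \calD^{\fig}(\calQ\mathrm{-Mod}^{\qc})$ described above, under which $\bbD_{\calT}$ corresponds to $\bbD_{\calQ}$ (the derived functor of $\sheafHom_{\calQ}(-,\calQ)$), and $Rp_*$ corresponds to restriction of scalars along $\calO_{F^{\bot}} \hookrightarrow \calQ$.

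First I would establish the analog of the coinduction isomorphism \eqref{eq:coind-T}: since $p_{F^{\bot}}^*\calE$ has rank $m$, the same wedge-product formula (with $n$ replaced by $m$ and $\calF$ by $p_{F^{\bot}}^*\calE$) yields a natural isomorphism of $\Gm$-equivariant $\calQ$-dg-modules
\[ \calQ \otimes_{\calO_{F^{\bot}}} p_{F^{\bot}}^* \calK^{\otimes -1} \ \xrightarrow{\sim} \ \Coind_{\calQ}(\calO_{F^{\bot}})[m] \langle 2m \rangle. \]
A direct computation shows that the wedge pairing intertwines the Koszul differential carried by $\calQ$ with the induced differential on $\Coind_{\calQ}(\calO_{F^{\bot}})$. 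Combining this with the coinduction--restriction adjunction yields, for any $\calM$ in $\calC \calF \calG^{\qc}(\calQ)$, a functorial isomorphism of complexes of $\calO_{F^{\bot}}$-modules
\[ \sheafHom_{\calQ}(\calM,\calQ) \ \cong \ \sheafHom_{\calO_{F^{\bot}}}(\calM, \calO_{F^{\bot}}) \otimes_{\calO_{F^{\bot}}} p_{F^{\bot}}^* \calK \, [m] \langle 2m \rangle, \]
in the spirit of \eqref{eq:D_T}.

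Next, by a straightforward analog of \cite[Proposition 3.1.1]{MRlkd} for $\calQ$-dg-modules (i.e.~Lemma \ref{lem:D_T-resolutions} transplanted from $\calT$ to $\calQ$), any object $\calM$ admits a quasi-isomorphism $\calP \xrightarrow{\qis} \calM$ in $\calC \calF \calG^{\qc}(\calQ)$ with each $\calP^i_j$ locally free of finite rank over $\calO_{F^{\bot}}$. Such a $\calP$ has two virtues: it is split on the left for $\sheafHom_{\calQ}(-,\calQ)$, so $\sheafHom_{\calQ}(\calP, \calQ)$ represents $\bbD_{\calQ}(\calM) = \bbD_{\calT}(\calM)$; and its underlying complex $Rp_* \calP$ is a bounded complex of locally free $\calO_{F^{\bot}}$-modules, so $\sheafHom_{\calO_{F^{\bot}}}(\calP, \calO_{F^{\bot}})$ represents $\bbD_{F^{\bot}}(Rp_*\calM)$.

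Applying $Rp_*$ to the displayed isomorphism for such $\calP$ then identifies $Rp_* \circ \bbD_{\calT}(\calM)$ with $\bbD_{F^{\bot}}(Rp_*\calM) \otimes_{\calO_{F^{\bot}}} p_{F^{\bot}}^* \calK [m] \langle 2m \rangle$, proving the lemma. The main technical point is the verification that the wedge-product coinduction isomorphism respects the nontrivial Koszul differential carried by $\calQ$ (unlike in \S \ref{ss:duality}, where $\calT$ has trivial differential); once this compatibility is checked, the remainder follows the template of Lemma \ref{lem:duality-T} essentially verbatim.
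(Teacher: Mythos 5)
Your proposal is correct and follows essentially the same route as the paper: pass to the realization by $\calQ$-dg-modules, establish the coinduction isomorphism $\calQ \cong \Coind_{\calQ}(\calO_{F^{\bot}}) \otimes_{\calO_{F^{\bot}}} p_{F^{\bot}}^* \calK [m]\langle 2m \rangle$ by the same wedge-pairing as in \eqref{eq:coind-T}, rewrite $\sheafHom_{\calQ}(-,\calQ)$ as $\sheafHom_{\calO_{F^{\bot}}}(-,\calO_{F^{\bot}}) \otimes p_{F^{\bot}}^*\calK[m]\langle 2m\rangle$, and evaluate on a termwise locally free resolution as in Lemma \ref{lem:D_T-resolutions}. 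You spell out two points the paper leaves implicit — the $\calQ$-analogue of Lemma \ref{lem:D_T-resolutions} and the verification that the wedge pairing intertwines the Koszul differential — but the argument is the same.
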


\begin{proof} Using the remarks above, one can work with $\calQ$-dg-modules instead of $\calT$-dg-modules. For $\calM$ split on the left for $\bbD_{\calQ}$, we have \[ Rp_* \bbD_{\calQ}(\calM) \ \cong \ \sheafHom_{\calQ}(\calM,\calQ). \] Now, one easily checks (as for isomorphism \eqref{eq:coind-T}) that there is an isomorphism of $\calQ$-dg-modules \[ \calQ \ \cong \ \Coind_{\calQ}(\calO_{F^{\bot}}) \otimes_{\calO_{F^{\bot}}} p_{F^{\bot}}^* \calK [m] \langle 2m \rangle. \] Hence we obtain  \[ Rp_* \bbD_{\calQ}(\calM) \ \cong \ \sheafHom_{\calO_{F^{\bot}}}(\calM,\calO_{F^{\bot}}) \otimes_{\calO_{F^{\bot}}} p_{F^{\bot}}^* \calK[m] \langle 2m \rangle, \] which gives the result. \end{proof}

\begin{remark} It is natural to consider that the ``dimension'' of the dg-scheme $F^{\bot} \rcap_{E^*} X$ is $\dim(F^{\bot}) + \dim(X) - \dim(E^*)$. Hence the ``difference of dimensions'' between $F^{\bot}$ and $F^{\bot} \rcap_{E^*} X$ is $m=\rk(E)$, which makes Lemma \ref{lem:duality-Fbot} consistent with the general fact that proper direct image commutes with Grothendieck-Serre duality.
\end{remark}

\subsection{Equivariant analogues}

Let us note that if an algebraic group acts on the scheme $X$, and acts linearly on the vector bundle $E$ preserving $F$, then there are obvious equivariant analogues of all the constructions and results of this section. As we will not use these equivariant versions, we do not state them.

\section{Reminder on localization in positive characteristic} \label{sec:reminder}

\subsection{Notation} \label{ss:notation}

Let $\bk$ be an algebraically closed field of characteristic
$p>0$. Let $R$ be a root system, and $G$ the corresponding connected,
semi-simple, simply-connected
algebraic group over $\bk$. In the whole paper we assume that \[ p>h, \] where $h$ is
the Coxeter number of $G$. Let $B$ be a
Borel subgroup of $G$, $T \subset B$ a maximal torus, $U$ the unipotent radical
of $B$. Let $\fg$, $\fb$, $\ft$, $\fn$, be their respective Lie algebras. Let $R^+ \subset R$
be the positive roots, chosen as the roots in $\fg/\fb$. Let $\calB:=G/B$ be the flag variety of $G$, and
$\wcalN:=T^*\calB$ its cotangent bundle. We have the geometric description
\[ \wcalN=\{(X,gB) \in \fg^* \times \calB \mid X_{|g \cdot
  \fb}=0\}. \] We also introduce the ``extended cotangent bundle''
  \[ \wfg:=\{(X,gB) \in \fg^* \times \calB \mid X_{|g \cdot
    \fn}=0\}. \] Let $\fh$ denote the ``abstract'' Cartan
  subalgebra of $\fg$, isomorphic to
  $\fb_0/[\fb_0,\fb_0]$ for any Borel subalgebra $\fb_0$
  of $\fg$.

We denote by $\bbY:=\mathbb{Z} R$ the root latice of $R$, and by
$\bbX:=X^*(T)$ the weight lattice. Let $W$ be the Weyl group of $(G,T)$,
$W_{\aff}:=W \ltimes \bbY$ the affine Weyl group, and $W_{\aff}':=W
\ltimes \bbX$ the extended affine Weyl group. Let $\rho \in \bbX$ be the half sum of the positive roots. The ``dot-action'' of $W$ on $\ft^*$ is defined by \[ w \bullet \lambda = w(\lambda + \rho) - \rho \] (where we identify $\rho$ and its differential). Similarly, there is a dot-action of $W$ on $\ft$ obtained by duality. There is also a dot-action of $W_{\aff}'$ on $\bbX$, defined by \[ (w t_{\lambda}) \bullet \mu = w(\mu + p\lambda + \rho) - \rho. \] We set \[ C_0:=\{ \lambda \in
\bbX \mid \forall \alpha \in
R^+, \ 0 < \langle \lambda + \rho, \alpha^{\vee} \rangle < p \},\]
the set of integral weights in the fundamental
alcove. We will also consider its ``closure'' \[ \overline{C_0}:=\{ \lambda \in
\bbX \mid \forall \alpha \in
R^+, \ 0 \leq \langle \lambda + \rho, \alpha^{\vee} \rangle \leq p \}.\]

If $P \subseteq G$ is a parabolic subgroup containing $B$, $\fp$
its Lie algebra, $\fp^{{\rm u}}$ the nilpotent radical of $\fp$, and
$\calP=G/P$ the corresponding flag variety, we consider the
following analogue of the variety $\wfg$:
\[ \wfg_{\calP}:=\{(X,gP) \in \fg^* \times
\calP \mid X_{|g \cdot \fp^{{\rm u}}}=0 \}.\] In particular,
$\wfg_{\calB}=\wfg$. The quotient morphism $\pi_{\calP} : \calB
\to \calP$ induces a morphism \begin{equation} \label{eq:defpiP}
  \widetilde{\pi}_{\calP}: \wfg \to
\wfg_{\calP}.\end{equation} We also denote by
$W_{P} \subseteq W$ the Weyl group of the Levi of $P$.

For any dominant weight $\lambda$, we denote by $L(\lambda)$ the simple $G$-module with highest weight $\lambda$.

Finally, we set $N:=\dim(G/B)$, $N_{\calP}:=\dim(G/P)$, $d=\dim(\fg)$.

\subsection{Localization}

In this subsection we review the localization theory in positive characteristic developped in \cite{BMR, BMR2, BM}.

Let $\fZ$ be the center of $\calU \fg$, the enveloping
algebra of $\fg$. The subalgebra of $G$-invariants,
$\fZ_{{\rm HC}}:=(\calU \fg)^G$ is central in $\calU \fg$. This
is the ``Harish-Chandra part'' of the center, which is isomorphic
to $S(\ft)^{(W,\bullet)}$, the algebra of $W$-invariants in the
symmetric algebra of $\ft$, for the dot-action. The center
$\fZ$ also has an other part, the ``Frobenius part''
$\fZ_{{\rm Fr}}$, which is generated, as an algebra, by the elements
$X^p - X^{[p]}$ for $X \in \fg$. It is isomorphic to
$S(\fg^{(1)})$, the functions on the Frobenius twist of
$\fg^*$. Under our assumption $p>h$, there is an isomorphism
\[ \fZ_{{\rm HC}} \otimes_{\fZ_{{\rm Fr}} \cap \fZ_{{\rm HC}}}
\fZ_{{\rm Fr}}
\xrightarrow{\sim} \fZ. \] Hence, a character of $\fZ$ is
given by a compatible pair $(\nu,\chi) \in \ft^* \times
\fg^{*}{}^{(1)}$. In this paper we will only consider the case when
$\chi=0$, and $\nu \in \ft^*$ is \emph{integral}, i.e.~in the image
of the natural map $\bbX \to \ft^*$. If $\lambda \in
\bbX$, we still denote by $\lambda$ its image in $\ft^*$. We
denote the corresponding specializations by \begin{align*} (\calU
\fg)^{\lambda} \ & := \ (\calU \fg) \otimes_{\fZ_{{\rm HC}}}
\bk_{\lambda}, \\[2pt] (\calU \fg)_{0} \ & := \ (\calU \fg)
\otimes_{\fZ_{{\rm Fr}}} \bk_{0}. \end{align*}

Let $\Mod^{\fig}(\calU \fg)$ be the abelian category of finitely
generated $\calU \fg$-modules. If $\lambda \in \bbX$, we denote by
$\Mod^{\fig}_{(\lambda,0)}(\calU
\fg)$ the category of finitely generated $\calU \fg$-modules on
which $\fZ$ acts with generalized character $(\lambda,0)$.
We define similarly the categories $\Mod^{\fig}_{0}((\calU
\fg)^{\lambda})$, $\Mod^{\fig}_{\lambda}((\calU \fg)_{0})$. Hence we have inclusions \[
\xymatrix@R=0.1cm{ \Mod^{\fig}_{0}((\calU \fg)^{\lambda}) \ar@{^{(}->}[rd]
  & & \\ & \Mod^{\fig}_{(\lambda,0)}(\calU
\fg) \ar@{^{(}->}[r] & \Mod^{\fig}(\calU \fg). \\
\Mod^{\fig}_{\lambda}((\calU \fg)_{0}) \ar@{^{(}->}[ru] & & } \] Note the category $\Mod^{\fig}_{\lambda}((\calU \fg)_{0})$ is equivalent to the category of finitely generated modules over $(\Ug)_0^{\widehat{\lambda}}$, the completion of $(\Ug)_0$ with the respect to the image of the ideal of $\fZ_{\HC}$ defined by $\lambda$ (see e.g.~\cite[\S 4.4]{Ri}).

Recall that a weight $\lambda \in \bbX$ is called
\emph{regular} if, for any root $\alpha$, $\langle \lambda +
\rho, \alpha^{\vee} \rangle \notin p \mathbb{Z}$, i.e. if $\lambda$ is not on any reflection hyperplane of $W_{\aff}$ (for the dot-action). If $\mu \in \bbX$, we denote by ${\rm Stab}_{(W_{\aff},\bullet)}(\mu)$ the stabilizer of $\mu$ for the dot-action of $W_{\aff}$ on $\bbX$. Under our hypothesis $p>h$, we have $(p \bbX) \cap \bbY = p \bbY$. It follows that ${\rm Stab}_{(W_{\aff},\bullet)}(\mu)$ is also the stabilizer of $\mu$ for the action of $W_{\aff}'$ on $\bbX$.

The localization theory in positive characteristic (due to Bezrukavnikov, Mirkovi{\'c} and Rumynin) provides a geometric description of the categories of $\Ug$-modules considered above. More precisely we have (see \cite[Theorem 5.3.1]{BMR} for $\ri$, and
\cite[Theorem 1.5.1.c, Lemma 1.5.2.b]{BMR2} for $\rii$):

\begin{thm}\label{thm:thmBMR}

$\ri$ Let $\lambda \in \bbX$ be regular. There exist equivalences of triangulated categories \begin{align} \label{eq:equivBMR} \calD^b
  \Coh_{\calB^{(1)}}(\wfg^{(1)})
  \ & \cong \ \calD^b\Mod^{\fig}_{(\lambda,0)}(\calU \fg), \\ \label{eq:equivBMR2}
  \calD^b\Coh_{\calB^{(1)}}(\wcalN^{(1)}) \ & \cong \
  \calD^b\Mod^{\fig}_{0}((\calU \fg)^{\lambda}). \end{align}

$\rii$ More generally, let $\mu \in \bbX$, and let $P$ be a parabolic
subgroup of $G$ containing $B$ such that ${\rm
  Stab}_{(W_{\aff},\bullet)}(\mu)=W_P$. Let $\calP=G/P$ be the corresponding
flag variety. Then there exists an equivalence of triangulated categories
\begin{equation} \label{eq:equivBMR3} \calD^b\Coh_{\calP^{(1)}}(\wfg_{\calP}^{(1)}) \
  \cong \ \calD^b\Mod^{\fig}_{(\mu,0)}(\calU \fg). \end{equation}

\end{thm}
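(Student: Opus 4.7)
My plan is to follow the strategy of Beilinson--Bernstein localization, adapted to characteristic $p$. First I would introduce, for each $\lambda \in \bbX$, the sheaf $\calD^\lambda$ of $\lambda$-twisted crystalline/differential operators on $\calB$. Because in characteristic $p$ the $p$-th powers of vector fields are central (the restricted structure), the sheaf $\calD^\lambda$ is naturally an $\calO$-algebra over $T^*\calB^{(1)}$, or more precisely, after allowing nontrivial Harish--Chandra character, over the extended cotangent bundle $\wfg^{(1)}$. The key structural step is to show that $\calD^\lambda$ is an \emph{Azumaya algebra} on $\wfg^{(1)}$ of rank $p^{2\dim \calB}$: one checks this locally on a trivialization of the cotangent bundle by computing the $p$-curvature map, which identifies the center of $\calD^\lambda$ with $\calO_{\wfg^{(1)}}$ (after the appropriate $\fZ_{\HC}$-reduction).

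Next I would promote this Azumaya algebra to an actual Morita equivalence. The Azumaya algebra $\calD^\lambda$ is known to split on the formal neighborhood of each fiber of $\wfg^{(1)} \to \fg^{*(1)}/\!/ G$, so taking modules for $\calD^\lambda$ is equivalent, on such completions, to taking $\calO$-coherent sheaves on $\wfg^{(1)}$. This is the geometric side of the story. On the algebraic side, I would prove that $R\Gamma(\calB,\calD^\lambda) \cong (\calU\fg)^\lambda$ and that, for $\lambda$ regular, $R\Gamma$ induces an equivalence
\[
R\Gamma : \calD^b\Coh(\wfg^{(1)}, \calD^\lambda) \xrightarrow{\sim} \calD^b\Mod^{\fig}_\lambda(\Ug),
\]
with quasi-inverse the localization functor $\calL oc = \calD^\lambda \otimes_{(\calU\fg)^\lambda}(-)$. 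Combining with the Morita equivalence gives \eqref{eq:equivBMR} after restricting to objects supported set-theoretically on the zero section $\calB^{(1)} \subset \wfg^{(1)}$, which on the algebraic side corresponds to the generalized Frobenius character $\chi = 0$, hence to $(\lambda,0)$. The version \eqref{eq:equivBMR2} is obtained the same way, but starting from $\calD^\lambda$ restricted to $\wcalN^{(1)}$, which corresponds algebraically to fixing the exact Harish--Chandra character $\lambda$ rather than a generalized one.

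For part $\rii$, the hypothesis ${\rm Stab}_{(W_\aff,\bullet)}(\mu) = W_P$ means $\mu$ lies on the wall corresponding to $P$ and is otherwise as regular as possible. The plan is to replace $\calB$ by the partial flag variety $\calP$ and consider the parabolic sheaves $\calD^\mu_{\calP}$ of $\mu$-twisted differential operators on $\calP$; these are Azumaya on $\wfg_{\calP}^{(1)}$. The key point, encoded in \cite[Lemma 1.5.2]{BMR2}, is that pulling back along $\widetilde{\pi}_{\calP} : \wfg \to \wfg_{\calP}$ and translating to the regular wall $\mu$ is a fully faithful embedding, and that $R(\widetilde{\pi}_{\calP})_*$ realizes the inverse on the appropriate singular block. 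Combining global sections with the Azumaya splitting as before yields \eqref{eq:equivBMR3}.

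The main obstacle in this program, and the technical heart of \cite{BMR}, is establishing \emph{derived affinity}: that $R\Gamma$ of $\calD^\lambda$ is exact on coherent sheaves (or at least an equivalence on the derived category) for every regular $\lambda$, not merely dominant regular ones. In characteristic zero this would follow from ampleness, but in characteristic $p$ one must instead leverage the Azumaya structure to translate the question into a statement about coherent cohomology on $\wfg^{(1)}$; the vanishing is then obtained via a reduction to the dominant case using the action of the affine Weyl group (realized by twisting functors / intertwining functors) and a careful analysis of supports on the zero section. The singular case $\rii$ additionally requires compatibility of this derived equivalence with translation through a wall, which is where the specific choice of parabolic $P$ matching the stabilizer of $\mu$ enters.
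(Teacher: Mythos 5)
This theorem is not proved in the paper: it appears in the ``Reminder on localization in positive characteristic'' section, and the paper presents it as an imported result, citing \cite[Theorem 5.3.1]{BMR} for part (i) and \cite[Theorem 1.5.1.c, Lemma 1.5.2.b]{BMR2} for part (ii). There is therefore no ``paper's own proof'' to compare against; your task here was really to reconstruct the argument from the cited sources. Your sketch does give a faithful high-level account of the Bezrukavnikov--Mirkovi\'c--Rumynin strategy: Azumaya property of (crystalline) twisted differential operators over the Frobenius-twisted cotangent bundle via $p$-curvature, splitting on formal neighborhoods, the isomorphism $R\Gamma(\calB,\calD^{\lambda}) \cong (\Ug)^{\lambda}$, derived Beilinson--Bernstein localization for regular $\lambda$, and the parabolic variant for singular weights matched to the stabilizer $W_P$.

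A few minor imprecisions worth flagging, since you intend this as a proof plan rather than a citation: it is the ``universal'' sheaf $\wcalD = (q_* \calD_{G/U})^T$ that is Azumaya over $\wfg^{(1)} \times_{\fh^{*(1)}} \fh^*$; the center of the $\lambda$-specialization $\calD^{\lambda}$ is $\calO_{\wcalN^{(1)}}$ rather than $\calO_{\wfg^{(1)}}$, and the passage between \eqref{eq:equivBMR} and \eqref{eq:equivBMR2} is exactly the distinction between the generalized and exact Harish--Chandra character, realized geometrically as working on the formal neighborhood of $\calB^{(1)} \times \{\lambda\}$ in $\wfg^{(1)} \times_{\fh^{*(1)}} \fh^*$ versus on $\wcalN^{(1)}$. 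The splitting is established on formal neighborhoods of fibers of the moment map (and in particular of the zero section $\calB^{(1)}$), not of fibers over $\fg^{*(1)}/\!/G$. None of these affect the overall architecture of your argument, but they matter if one tries to turn the sketch into a proof. Since the paper treats the statement as a black box, the appropriate ``proof'' in this context is simply the citation.
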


As in \cite{BMR}, let us consider $\wcalD:=(q_* \calD_{G/U})^T$, where $q:G/N \to \calB$ is the projection, and $T$ acts on $G/U$ via right multiplication. This algebra is an Azumaya algebra over $\wfg^{(1)} \times_{\fh^*{}^{(1)}} \fh^*$. (Here, the morphism $\fh^* \to \fh^*{}^{(1)}$ is the Artin-Schreier map, see \cite{BMR}.) This Azumaya algebra splits on the formal neighborhood of $\calB^{(1)} \times \{\lambda\}$, for any $\lambda \in \bbX$. Moreover, there are natural choices of splitting bundles. For any regular $\lambda \in \bbX$, we denote by $\calM^{\lambda}$ the splitting bundle constructed as in \cite[\S 1.3.5]{BMR2}, and by \begin{align*} \gamma_{\lambda}^{\calB} :
\calD^b\Coh_{\calB^{(1)}}(\wfg^{(1)}) \ & \xrightarrow{\sim} \
\calD^b\Mod^{\fig}_{(\lambda,0)}(\calU \fg) \\ \epsilon_{\lambda}^{\calB} :
\calD^b\Coh_{\calB^{(1)}}(\wcalN^{(1)}) \ & \xrightarrow{\sim} \
\calD^b\Mod^{\fig}_{0}((\calU \fg)^{\lambda}), \end{align*} the associated equivalences \eqref{eq:equivBMR} and \eqref{eq:equivBMR2}. Note that these equivalences depend on $\lambda$, and not only on its image in $\bbX/(W_{\aff}',\bullet)$. Note also that the projection $\wfg^{(1)} \times_{\fh^*{}^{(1)}} \fh^* \to \wfg^{(1)}$ induces an isomorphism\footnote{In fact, this property is already used to deduce equivalence \eqref{eq:equivBMR} from \cite[Theorem 5.3.1]{BMR}.} between the formal neighborhood of $\calB^{(1)} \times \{\lambda\}$ and that of $\calB^{(1)}$, hence one can consider $\calM^{\lambda}$ as a vector bundle on the formal neighborhood of the zero section in $\wfg^{(1)}$.

Similarly, for $\mu, \calP$ as in Theorem \ref{thm:thmBMR}, we
denote by $\calM^{\mu}_{\calP}$ the splitting bundle on the formal neighborhood of $\calP^{(1)} \times \{\mu\}$ in $\wfg_{\calP}^{(1)} \times_{\fh^*{}^{(1)}/W_{\calP}} \fh^*/(W_{\calP},\bullet)$ (or equivalently on the formal neighborhood of the zero section in $\wfg_{\calP}^{(1)}$) constructed as in \cite[\S 1.3.5]{BMR2}, and by \[ \gamma^{\calP}_{\mu}
: \calD^b\Coh_{\calP^{(1)}}(\wfg_{\calP}^{(1)}) \
\xrightarrow{\sim} \ \calD^b\Mod^{\fig}_{(\mu,0)}(\calU \fg) \] the
associated equivalence \eqref{eq:equivBMR3}.

Finally, the following theorem is proved in \cite[Theorem 3.4.1, Proposition 3.4.13, Theorem 3.4.14]{Ri}. As in \S \ref{ss:duality-Fbot}, we denote by $p: (\wfg \, \rcap_{\fg^* \times \calB} \, \calB)^{(1)} \to \wfg^{(1)}$ and $p_{\calP}: (\wfg_{\calP} \, \rcap_{\fg^* \times \calP} \, \calP)^{(1)} \to \wfg_{\calP}^{(1)}$ the natural morphisms of dg-schemes.

\begin{thm} \label{thm:localization-restricted}

$\ri$ Let $\lambda \in \bbX$ be regular. There exists an equivalence of
triangulated categories \[ \widehat{\gamma}_{\lambda}^{\calB} : \DGCoh((\wfg
\, \rcap_{\fg^* \times \calB} \, \calB)^{(1)}) \ \xrightarrow{\sim} \
\calD^{b} \Mod^{\fig}_{\lambda}((\calU \fg)_{0}) \] such that the following diagram commutes, where the functor ${\rm Incl}$
is induced by the inclusion $\Mod^{\fig}_{\lambda}((\calU \fg)_0)
\hookrightarrow \Mod^{\fig}_{(\lambda,0)}(\calU \fg)$: \[ \xymatrix@C=60pt{
  \DGCoh((\wfg \, \rcap_{\fg^* \times
    \calB} \, \calB)^{(1)}) \ar[d]_-{Rp_*}
  \ar[r]_-{\sim}^-{\widehat{\gamma}^{\calB}_{\lambda}} & \calD^b
  \Mod^{\fig}_{\lambda}((\calU \fg)_0) \ar[d]^-{{\rm Incl}}
   \\ \calD^b
  \Coh_{\calB^{(1)}}(\wfg^{(1)}) \ar[r]^-{\gamma^{\calB}_{\lambda}}_-{\sim} & \calD^b
  \Mod^{\fig}_{(\lambda,0)}(\calU \fg). \\ } \] 

$\rii$ Let $\mu, \calP$ be as in Theorem
{\rm \ref{thm:thmBMR}}$\rii$. There exists an equivalence of triangulated categories
\[ \widehat{\gamma}^{\calP}_{\mu}: \DGCoh((\wfg_{\calP}
\, \rcap_{\fg^* \times \calP} \, \calP)^{(1)}) \ \xrightarrow{\sim} \ \calD^b
\Mod^{\fig}_{\mu}((\calU \fg)_0) \] such that the following diagram
commutes, where ${\rm Incl}$ is induced by the inclusion
$\Mod^{\fig}_{\mu}((\calU \fg)_0) \hookrightarrow \Mod^{\fig}_{(\mu,0)}(\calU
\fg)$: \[ \xymatrix@C=60pt{ \DGCoh((\wfg_{\calP} \, \rcap_{\fg^* \times
\calP} \, \calP)^{(1)}) \ar[d]_-{R(p_{\calP})_*}
\ar[r]_-{\sim}^-{\widehat{\gamma}^{\calP}_{\mu}} & \calD^b
\Mod^{\fig}_{\mu}((\calU \fg)_0)
\ar[d]^-{{\rm Incl}} \\ \calD^b
\Coh_{\calP^{(1)}}(\wfg_{\calP}^{(1)}) \ar[r]^-{\gamma^{\calP}_{\mu}}_-{\sim} & \calD^b
\Mod^{\fig}_{(\mu,0)}(\calU \fg). \\ } \]

\end{thm}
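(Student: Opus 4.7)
\emph{Plan.} The strategy is to lift the equivalence $\gamma^{\calB}_\lambda$ of Theorem \ref{thm:thmBMR} to a dg-level equivalence that captures the extra condition ``$\fZ_{\Fr}$ acts strictly by $0$'' (not merely ``with generalized character $0$'') which cuts out $(\Ug)_0$-modules inside $\Ug$-modules. The guiding principle is that the fully faithful inclusion $\Mod^{\fig}_{\lambda}((\Ug)_0) \hookrightarrow \Mod^{\fig}_{(\lambda,0)}(\Ug)$ is restriction of scalars along $\Ug \twoheadrightarrow (\Ug)_0$, whose derived left adjoint is $\bk_0 \otimes^L_{\fZ_{\Fr}} (-)$; geometrically, under the isomorphism $\fZ_{\Fr} \cong \calO_{\fg^*{}^{(1)}}$ this corresponds to derived pullback along the closed embedding $\{0\} \hookrightarrow \fg^*{}^{(1)}$ via the moment map $\wfg^{(1)} \to \fg^*{}^{(1)}$.

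First I would identify $\wfg \, \rcap_{\fg^* \times \calB} \, \calB$ with the derived fiber of the moment map at $0$. Since $\calB \hookrightarrow \fg^* \times \calB$ is the zero section of a trivial vector bundle, its Koszul resolution gives a dg-algebra presentation and realizes $\DGCoh((\wfg \, \rcap_{\fg^* \times \calB} \, \calB)^{(1)})$ as the derived category of $\Gm$-equivariant coherent dg-modules over a Koszul-type dg-algebra $\calQ$ on $\wfg^{(1)}$ (of the type appearing in \S\ref{ss:duality-Fbot}). Under this identification, the functor $Rp_*$ is simply restriction of scalars from $\calQ$ to $\calO_{\wfg^{(1)}}$, so the commutativity of the claimed diagram will become the naturality of $\gamma^{\calB}_\lambda$ with respect to the $\fZ_{\Fr}$-action.

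Next I would promote the splitting bundle $\calM^\lambda$ to a $(\Ug, \calQ)$-bimodule on the formal neighborhood of $\calB^{(1)}$. The left $\Ug$-action on $\calM^\lambda$ already exists, and the right $\calO_{\wfg^{(1)}}$-action extends naturally to the additional exterior generators of $\calQ$ precisely because the moment-map components (which control that extension) act on $\calM^\lambda$ through the ideal $\fZ_{\Fr}^+$ we wish to kill. The desired functor will then send a $\calQ$-dg-module $\calF$ to $R\Gamma(\wfg^{(1)}, \calM^\lambda \otimes^L_\calQ \calF)$; by construction it lands in $(\Ug)_0$-modules, and the projection formula yields the commutative diagram with $\gamma^{\calB}_\lambda \circ Rp_*$. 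That it is an equivalence follows by d\'evissage: $\calQ$-dg-modules are generated, as a triangulated category, by their underlying $\calO_{\wfg^{(1)}}$-modules (in the spirit of the argument in Proposition \ref{prop:equivalences-fg}), and on such generators the functor reduces to $\gamma^{\calB}_\lambda$, which is an equivalence by Theorem \ref{thm:thmBMR}.

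The main obstacle will be rigorously promoting $\calM^\lambda$ to a $\calQ$-action in a way that is canonical and compatible with the Koszul differential, and checking that the resulting derived tensor product preserves the finite-generation and boundedness conditions built into $\DGCoh$; this is where the precise form of the Koszul resolution and the properness of $\calB^{(1)}$ enter. Part $\rii$ follows by the same pattern with $\calP$, $\wfg_{\calP}$, and $\calM^{\mu}_{\calP}$ in place of $\calB$, $\wfg$, and $\calM^\lambda$; the hypothesis $\mathrm{Stab}_{(W_{\aff},\bullet)}(\mu) = W_P$ is exactly what is needed for the parabolic splitting bundle of \cite[\S 1.3.5]{BMR2} to exist on the relevant formal neighborhood, and no further ideas are required.
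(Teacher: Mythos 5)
This theorem is not proved in the present paper; it is cited from \cite[Theorem 3.4.1, Proposition 3.4.13, Theorem 3.4.14]{Ri}. Your overall strategy (model $\DGCoh((\wfg\rcap_{\fg^*\times\calB}\calB)^{(1)})$ via the Koszul dg-algebra $\calQ$ on $\wfg^{(1)}$, so that $Rp_*$ becomes restriction of scalars, then transport the splitting bundle $\calM^{\lambda}$ to a dg-bimodule and apply $R\Gamma$, finishing by d\'evissage down to $\gamma_\lambda^{\calB}$) is genuinely the same architecture as \cite{Ri}, and the parabolic case is indeed the same argument with $\calP$, $\wfg_{\calP}$, $\calM^{\mu}_{\calP}$.

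There is, however, a concrete error in the bimodule step, and the functor formula you write down is inconsistent with the diagram you are trying to commute. You propose that the right $\calO_{\wfg^{(1)}}$-action on $\calM^{\lambda}$ ``extends naturally to the additional exterior generators of $\calQ$.'' It cannot: $\calM^{\lambda}$ is concentrated in cohomological degree $0$ while the exterior generators of $\calQ$ sit in degree $-1$, so they are forced to act by $0$; the Koszul differential then requires the moment-map components (i.e.\ the $\fZ_{\Fr}$-action) to \emph{vanish} on $\calM^{\lambda}$, which they do not — they are only topologically nilpotent on the formal neighborhood. So $\calM^{\lambda}$ itself is not a $\calQ$-dg-module. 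Correspondingly, the formula $R\Gamma(\calM^{\lambda}\otimes^{L}_{\calQ}\calF)$ is the wrong functor: even if one had a $\calQ$-action on $\calM^{\lambda}$ factoring through the augmentation $\calQ\to\calO_{\wfg^{(1)}}$, the tensor $\calM^{\lambda}\otimes_{\calQ}\calF$ would compute $\calM^{\lambda}\otimes_{\calO_{\wfg^{(1)}}}\bigl(\calO_{\wfg^{(1)}}\otimes_{\calQ}\calF\bigr)$, which is \emph{not} $\calM^{\lambda}\otimes_{\calO_{\wfg^{(1)}}}\mathrm{For}(\calF)$, so the projection-formula compatibility with $Rp_*$ fails.

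The fix, which is what \cite{Ri} does, is to keep the tensor over $\calO_{\wfg^{(1)}}$: set $\widehat{\gamma}^{\calB}_{\lambda}(\calF)=R\Gamma\bigl(\calM^{\lambda}\otimes_{\calO_{\wfg^{(1)}}}\calF\bigr)$. Compatibility with $Rp_*$ is then immediate because $Rp_*$ is restriction of scalars along $\calO_{\wfg^{(1)}}\hookrightarrow\calQ$. The point that requires work — and that your ``bimodule'' step was supposed to address — is that this complex does not carry an honest $(\Ug)_{0}$-action from $\calM^{\lambda}$ alone; instead, the $\wcalD^{\widehat{\lambda}}$-action on $\calM^{\lambda}$ together with the $\calQ$-action on $\calF$ give an action of the dg-algebra $\wcalD^{\widehat{\lambda}}\otimes_{\calO_{\fg^{*(1)}}}\calQ\cong\wcalD^{\widehat{\lambda}}\otimes_{\bk}\Lambda(\fg^{(1)})$ (with Koszul differential), whose $R\Gamma$ is a dg-algebra quasi-isomorphic to the completion $(\Ug)_{0}^{\widehat{\lambda}}$. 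It is this dg-algebra replacement of $\Ug$ (not a $\calQ$-structure on $\calM^{\lambda}$) that produces the $(\Ug)_{0}$-module structure on the derived level. Once this is set up correctly, your d\'evissage argument does close the proof.
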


\begin{remark} \label{rk:singular-weights}

Equivalently, the condition of Theorem \ref{thm:thmBMR}$\rii$ says that $\mu$ is on the reflection hyperplane corresponding to any simple root of $W_{P}$, but not on any reflection hyperplane of a reflection (simple or not) in $W_{\aff} - W_{P}$. With this description, it is clear that if $\mu$ satisfies this condition, then $-\mu - 2 \rho$ also satisfies it (for the same parabolic subgroup).

\end{remark}

\subsection{Geometric description of duality} \label{ss:geometric-duality}

There exists a natural isomorphism of algebras $\Ug \xrightarrow{\sim} \Ug^{\op}$, induced by the assignment $X \in \fg \mapsto -X$. Hence the duality $M \mapsto M^*$ induces a duality operation $M \mapsto M^{\vee}$ on the category of finite dimensional (left) $\Ug$-modules. This duality induces a duality between the categories $\Mod^{\fig}_{(\lambda,0)}(\calU \fg)$ and $\Mod^{\fig}_{(-\lambda-2\rho,0)}(\calU \fg)$\footnote{It is more usual to replace $-\lambda-2\rho$ by $-w_0 \lambda$. However we have $-w_0 \lambda = w_0 \bullet(-\lambda-2\rho)$, hence these weights induce the same character of the Harish-Chandra center.}, between $\Mod^{\fig}_{0}((\calU \fg)^{\lambda})$ and $\Mod^{\fig}_{0}((\calU \fg)^{-\lambda-2\rho})$, and between $\Mod^{\fig}_{\lambda}((\calU \fg)_{0})$ and $\Mod^{\fig}_{-\lambda-2\rho}((\calU \fg)_{0})$. We denote all these dualities similarly. 

We denote by $\Phi : (\Ug)_0 \xrightarrow{\sim} \bigl((\Ug)_0 \bigr)^{\op}$ the induced isomorphism. For any $\lambda \in \bbX$, it induces an isomorphism \begin{equation} \label{eq:isom-Phi}  \Phi_{\lambda} : \ (\Ug)_0^{\widehat{\lambda}} \ \xrightarrow{\sim} \ \bigl((\Ug)_0^{\widehat{-\lambda-2\rho}} \bigr)^{\op}. \end{equation}

In \cite[Section 3]{BMR2}, the authors give a geometric description of these dualities. More precisely, they prove part $\ri$ of the following proposition (see \cite[Proposition 3.0.9]{BMR2}). Part $\rii$ can be proved similarly (see Remark \ref{rk:singular-weights}). Let $\sigma : \wfg^{(1)} \to \wfg^{(1)}$ be the automorphism given by multiplication by $-1$ in the fibers of the vector bundle. We use the same notation for the similarly defined automorphism of $\wfg_{\calP}^{(1)}$.

\begin{prop} \label{prop:BMR-duality}

$\ri$ Let $\lambda \in \bbX$ be regular. Then the following diagram commutes. \[ \xymatrix@C=60pt{ \calD^b\Coh_{\calB^{(1)}}(\wfg^{(1)}) \ar[d]_-{\gamma_{\lambda}^{\calB}}^-{\wr} \ar[r]^-{\sigma^* \bbD_{\wfg}[d]} & \calD^b\Coh_{\calB^{(1)}}(\wfg^{(1)}) \ar[d]^-{\gamma_{-\lambda-2\rho}^{\calB}}_-{\wr} \\ \calD^b \Mod^{\fig}_{(\lambda,0)}(\calU \fg) \ar[r]^-{(-)^{\vee}} & \calD^b \Mod^{\fig}_{(-\lambda-2\rho,0)}(\calU \fg). } \]

$\rii$ More generally, let $\mu,P$ be as in Theorem {\rm \ref{thm:thmBMR}}$\rii$. Then the following diagram commutes. \[ \xymatrix@C=60pt{ \calD^b\Coh_{\calP^{(1)}}(\wfg_{\calP}^{(1)}) \ar[d]_-{\gamma_{\mu}^{\calP}}^-{\wr} \ar[r]^-{\sigma^* \bbD_{\wfg_{\calP}}[d]} & \calD^b\Coh_{\calP^{(1)}}(\wfg_{\calP}^{(1)}) \ar[d]^-{\gamma_{-\mu-2\rho}^{\calP}}_-{\wr} \\ \calD^b \Mod^{\fig}_{(\mu,0)}(\calU \fg) \ar[r]^-{(-)^{\vee}} & \calD^b \Mod^{\fig}_{(-\mu-2\rho,0)}(\calU \fg). } \]

\end{prop}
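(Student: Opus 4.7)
The plan is to run the argument of \cite[Proposition 3.0.9]{BMR2}, which handles part $\ri$, in the parabolic setting of $\rii$. Algebraically, the duality $(-)^{\vee}$ factors as the anti-involution $\Phi : \Ug \xrightarrow{\sim} \Ug^{\op}$ induced by $X \mapsto -X$, followed by the standard $\bk$-linear duality $\Hom_{\bk}(-,\bk)$ on finite-dimensional modules. Geometrically, the strategy is to realize $\Phi$ via the fiberwise involution $\sigma$ on $\wfg_{\calP}^{(1)}$, and $\bk$-linear duality via $\bbD_{\wfg_{\calP}}[d]$; note that the shift $[d]$ has the right size because $\dim \wfg_{\calP}^{(1)} = d$ (a direct computation using the description of $\wfg_{\calP}$ as an affine bundle over $\calP$), so that $\bbD_{\wfg_{\calP}}[d]$ is the correct normalization to send torsion sheaves supported on $\calP^{(1)}$ into sheaves supported on $\calP^{(1)}$.

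The first step is to lift $\Phi$ to an anti-involution of the parabolic twisted differential operator algebra $\wcalD_{\calP} := (q_* \calD_{G/U_P})^{L_P}$ covering $\sigma$ on $\wfg_{\calP}^{(1)}$. This is the direct analogue of \cite[\S 3.2]{BMR2}, since the formalism of twisted differential operators and the identification of $\Phi$ with fiberwise negation in the cotangent direction transfers from the Borel to the parabolic setting without change, using $\fp^{\rmu}$ in place of $\fn$. The second step is to identify the interaction of $\sigma^* \bbD_{\wfg_{\calP}}[d]$ with the splitting bundle $\calM_{\calP}^{\mu}$: one must show that, on the formal neighborhood of $\calP^{(1)}$, there is a natural isomorphism of splitting bundles
\[
\sigma^* \bbD_{\wfg_{\calP}}(\calM_{\calP}^{\mu})[d] \ \cong \ \calM_{\calP}^{-\mu-2\rho}
\]
(viewed up to a line-bundle twist accounting for the difference between the two Azumaya structures under the anti-involution). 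Here Remark \ref{rk:singular-weights} is essential: it guarantees that $-\mu-2\rho$ lies in the same facet type as $\mu$, so that $\calM_{\calP}^{-\mu-2\rho}$ is well-defined and yields the equivalence $\gamma_{-\mu-2\rho}^{\calP}$ appearing on the right-hand side of the square. Once this identification is in hand, the commutativity of the diagram in $\rii$ follows formally by tracking an object of $\calD^b\Coh_{\calP^{(1)}}(\wfg_{\calP}^{(1)})$ through the equivalences, exactly as in the final paragraph of \cite[Proposition 3.0.9]{BMR2}.

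The main obstacle is the explicit $-2\rho$ shift in the identification of splitting bundles. By construction in \cite[\S 1.3.5]{BMR2}, $\calM_{\calP}^{\mu}$ is a fixed splitting bundle twisted by a line bundle depending linearly on $\mu$; the sign change of $\mu$ comes directly from the anti-involution combined with $\sigma^*$, and the extra $2\rho$ arises as the character of the canonical bundle of $\calP^{(1)}$ which enters when one rewrites naive duality $\bbD_{\wfg_{\calP}}$ in terms of the parabolic version of Grothendieck--Serre duality for the splitting bundle. Verifying this shift amounts to the parabolic analogue of \cite[Lemma 3.0.8]{BMR2}, and the argument proceeds uniformly in $\calB$ and $\calP$ since the splitting bundles of \cite[\S 1.3.5]{BMR2} are constructed in a parallel manner in both cases.
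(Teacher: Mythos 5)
Your proposal matches the paper's approach: part $\ri$ is exactly \cite[Proposition 3.0.9]{BMR2}, and part $\rii$ is proved by running the identical argument in the parabolic setting, with Remark~\ref{rk:singular-weights} guaranteeing that $-\mu-2\rho$ lies on a facet of the same type as $\mu$ so that $\gamma^{\calP}_{-\mu-2\rho}$ is defined. One small slip in your aside: $\bbD_{\wfg_{\calP}}[d]$ does \emph{not} send sheaves supported on $\calP^{(1)}$ to sheaves (the codimension of $\calP^{(1)}$ in $\wfg_{\calP}^{(1)}$ is $d-N_{\calP}$, not $d$); the shift $[d]$ is instead the right normalization because the direct image along the affine, proper morphism $\wfg_{\calP}^{(1)} \to \fg^{*(1)}$ reduces the comparison to $\bbD_{\fg^{*}}(M) \cong M^{*}[-d]\langle -2d\rangle$ for finite-dimensional modules, as in Lemma~\ref{lem:duality-g^*}.
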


The same proof works also for the other categories of $\calU \fg$-modules. We obtain the following results, where again $\sigma : \wcalN^{(1)} \to \wcalN^{(1)}$, $\sigma : (\wfg \, \rcap_{\fg^* \times \calB} \, \calB)^{(1)} \to (\wfg \, \rcap_{\fg^* \times \calB} \, \calB)^{(1)}$ and $\sigma : (\wfg_{\calP} \, \rcap_{\fg^* \times \calP} \, \calP)^{(1)} \to (\wfg_{\calP} \, \rcap_{\fg^* \times \calP} \, \calP)^{(1)}$ denote multiplication by $-1$ in the fibers.

\begin{prop} \label{prop:BMR-duality-HC}

Let $\lambda \in \bbX$ be regular. The following diagram commutes. \[ \xymatrix@C=60pt{ \calD^b\Coh_{\calB^{(1)}}(\wcalN^{(1)}) \ar[d]_-{\epsilon_{\lambda}^{\calB}} \ar[r]^-{\sigma^* \bbD_{\wcalN}[2N]} & \calD^b\Coh_{\calB^{(1)}}(\wcalN^{(1)}) \ar[d]^-{\epsilon_{-\lambda-2\rho}^{\calB}} \\ \calD^b \Mod^{\fig}_{0}((\calU \fg)^{\lambda}) \ar[r]^-{(-)^{\vee}} & \calD^b \Mod^{\fig}_{0}((\calU \fg)^{-\lambda-2\rho}). } \]

\end{prop}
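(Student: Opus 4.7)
The plan is to mimic, step by step, the proof of Proposition~\ref{prop:BMR-duality} (which is \cite[Proposition 3.0.9]{BMR2}), with $\wfg$ replaced by $\wcalN$ throughout; the structure of the argument is identical, and only the numerical shift changes, from $[d]$ to $[2N]$.

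First I would unpack the equivalence $\epsilon^{\calB}_{\lambda}$ as the composition of the Morita-type equivalence between $\wcalD$-modules on the formal neighborhood of $\calB^{(1)} \times \{\lambda\}$ and coherent sheaves on this neighborhood (given by $\calM^{\lambda} \otimes_{\calO}(-)$) with the global sections / localization step identifying such modules with objects of $\Mod^{\fig}_0((\calU\fg)^{\lambda})$. The anti-involution $X \mapsto -X$ of $\fg$ then manifests itself as follows: (a) it induces $\Phi$ on $\calU\fg$; (b) on the Harish-Chandra center $\fZ_{\HC} \cong S(\ft)^{(W,\bullet)}$ it sends the character $\lambda$ to $-\lambda-2\rho$, the $2\rho$-shift arising from the $\rho$-twist in the dot-action; (c) on the Frobenius center $\fZ_{\Fr} \cong S(\fg^{*(1)})$ it is the fiberwise $(-1)$-automorphism, which is precisely $\sigma$; (d) on $\wcalD$ it gives an anti-isomorphism swapping left and right module structures.

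Combining these items exactly as in \cite[\S 3]{BMR2}, the $\bk$-linear duality $(-)^{\vee}$ on the module side corresponds, under $\epsilon^{\calB}_{\lambda}$, to the composition of $\sigma^*$ with an appropriate $R\sheafHom(-,\calO_{\wcalN^{(1)}})$-functor, followed by $\epsilon^{\calB}_{-\lambda-2\rho}$. The residual cohomological shift is governed by Grothendieck--Serre duality applied to the splitting bundle: since $\wcalN = T^*\calB$ is smooth and Calabi-Yau of dimension $2N$ (its canonical bundle being canonically trivialized by the symplectic form on the cotangent bundle), the shift is exactly $[2N]$ and no line-bundle twist appears, in contrast to what would happen over a non-Calabi-Yau base.

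The main obstacle is the splitting-bundle compatibility, namely the identification $\sigma^* \bbD_{\wcalN}(\calM^{\lambda})[2N] \cong \calM^{-\lambda-2\rho}$ (as modules over the appropriate restriction of $\wcalD$, modulo the natural swap between left and right structures). This is the technical heart of the argument in \cite[Section 3]{BMR2}. In the present situation it should follow from the analogous identity on $\wfg^{(1)}$ established there, by restriction to $\wcalN^{(1)}$, since by construction the splitting bundle $\calM^{\lambda}$ on the formal neighborhood of $\calB^{(1)}$ in $\wcalN^{(1)}$ is the restriction of the corresponding splitting bundle on $\wfg^{(1)}$. Once this identification is granted, the commutativity of the diagram follows formally by tensoring.
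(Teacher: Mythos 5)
Your proposal is correct and takes exactly the route the paper does: the paper offers no independent proof of Proposition~\ref{prop:BMR-duality-HC}, merely asserting that ``the same proof works also for the other categories of $\calU\fg$-modules'' as in Proposition~\ref{prop:BMR-duality} = \cite[Proposition 3.0.9]{BMR2}, which is precisely the adaptation you outline (same unpacking of the splitting-bundle equivalences, with $\wfg$ replaced by $\wcalN$ and the shift $[d]$ replaced by $[2N] = [\dim \wcalN]$). One small remark: your ``Calabi--Yau'' heuristic applies to $\wfg$ as well --- indeed $\omega_{\wfg} = p^*\bigl(\omega_{\calB} \otimes (\det\calE)^{-1}\bigr)$ with $\det\calE = \calO_{\calB}(-2\rho)$, so $\omega_{\wfg}$ is trivial too, consistent with the absence of a line-bundle twist in Proposition~\ref{prop:BMR-duality} --- so the contrast you draw is hypothetical rather than a feature distinguishing the two propositions; and in (c) the identification should read $\fZ_{\Fr} \cong S(\fg^{(1)})$, the functions on $\fg^{*(1)}$.
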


Recall the construction of the functor $\bbD_{\calT}$ in \S \ref{ss:duality}. Similar arguments allow to construct duality functors \begin{align*} \bbD_{\calT}^0 : \DGCoh((\wfg \, \rcap_{\fg^* \times \calB} \, \calB)^{(1)}) \ & \xrightarrow{\sim} \ \DGCoh((\wfg \, \rcap_{\fg^* \times \calB} \, \calB)^{(1)}), \\ \bbD_{\calT,\calP}^0 : \DGCoh((\wfg_{\calP}
\, \rcap_{\fg^* \times \calP} \, \calP)^{(1)}) \ & \xrightarrow{\sim} \  \DGCoh((\wfg_{\calP}
\, \rcap_{\fg^* \times \calP} \, \calP)^{(1)}). \end{align*} We have:

\begin{prop} \label{prop:BMR-duality-Fr}

$\ri$ Let $\lambda \in \bbX$ be regular. The following diagram commutes. \[ \xymatrix@C=60pt{ \DGCoh((\wfg
\, \rcap_{\fg^* \times \calB} \, \calB)^{(1)}) \ar[d]_-{\widehat{\gamma}_{\lambda}^{\calB}} \ar[r]^-{\sigma^* \bbD_{\calT}^0} & \DGCoh((\wfg
\, \rcap_{\fg^* \times \calB} \, \calB)^{(1)}) \ar[d]^-{\widehat{\gamma}_{-\lambda-2\rho}^{\calB}} \\ \calD^b \Mod^{\fig}_{\lambda}((\calU \fg)_0) \ar[r]^-{(-)^{\vee}} & \calD^b \Mod^{\fig}_{-\lambda-2\rho}((\calU \fg)_0). } \]

$\rii$ More generally, let $\mu,P$ be as in Theorem {\rm \ref{thm:thmBMR}}$\rii$. The following diagram commutes. \[ \xymatrix@C=60pt{ \DGCoh((\wfg_{\calP}
\, \rcap_{\fg^* \times \calP} \, \calP)^{(1)}) \ar[d]_-{\widehat{\gamma}_{\mu}^{\calP}} \ar[r]^-{\sigma^* \bbD_{\calT,\calP}^0} & \DGCoh((\wfg_{\calP}
\, \rcap_{\fg^* \times \calP} \, \calP)^{(1)}) \ar[d]^-{\widehat{\gamma}_{-\mu-2\rho}^{\calP}} \\ \calD^b \Mod^{\fig}_{\mu}((\calU \fg)_0) \ar[r]^-{(-)^{\vee}} & \calD^b \Mod^{\fig}_{-\mu-2\rho}((\calU \fg)_0). } \]

\end{prop}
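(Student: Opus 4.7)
The approach is to adapt the argument of Proposition \ref{prop:BMR-duality} (i.e., [BMR2, Proposition 3.0.9]) to the dg-setting, using the fact that $\widehat{\gamma}^\calB_\lambda$ and $\widehat{\gamma}^\calP_\mu$ are built from the same splitting bundles $\calM^\lambda$ and $\calM^\mu_\calP$ as in the unrestricted case.

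First I would reduce to the unrestricted statement by observing that $(-)^\vee$ commutes with the inclusion ${\rm Incl}$ of Theorem \ref{thm:localization-restricted}: the anti-automorphism $\Phi : \Ug \to \Ug^{\op}$, $X \mapsto -X$, preserves the two-sided ideal $\langle X^p - X^{[p]} \mid X \in \fg \rangle$, hence the duality is compatible with the passage from $\Mod^{\fig}_{(\lambda,0)}(\calU\fg)$ to $\Mod^{\fig}_\lambda((\Ug)_0)$. On the geometric side, the functor $\bbD_\calT^0$ is the dg-analogue of the duality $\bbD_\calT$ constructed in \S \ref{ss:duality}, and by the analogue of Lemma \ref{lem:duality-Fbot} in our setting, $Rp_* \circ \sigma^* \bbD_\calT^0$ agrees with $\sigma^* \bbD_\wfg[d] \circ Rp_*$; the shifts and line-bundle twists appearing in Lemma \ref{lem:duality-Fbot} are absorbed by the normalization of $\bbD_\calT^0$ via the isomorphism \eqref{eq:coind-T}.

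Combining these ingredients with Theorem \ref{thm:localization-restricted}(i) and Proposition \ref{prop:BMR-duality}(i), one finds that the two compositions $(-)^\vee \circ \widehat{\gamma}^\calB_\lambda$ and $\widehat{\gamma}^\calB_{-\lambda-2\rho} \circ \sigma^* \bbD_\calT^0$ become isomorphic after post-composition with ${\rm Incl}$. To upgrade this to an isomorphism at the dg-level, I would construct the natural isomorphism directly from the comparison of splitting bundles: the identification $\bbD_\wfg(\calM^\lambda) \cong \sigma^* \calM^{-\lambda - 2\rho}$ (up to a canonical shift) used in the proof of [BMR2, Proposition 3.0.9] transfers verbatim to the dg-lifts, since those lifts are obtained by the same local construction along the zero section of $\wfg^{(1)}$.

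Part (ii) follows by the same argument with $\calB$ replaced by $\calP$, the splitting bundle $\calM^\mu_\calP$ in place of $\calM^\lambda$, and Theorem \ref{thm:localization-restricted}(ii) and Proposition \ref{prop:BMR-duality}(ii) replacing their $\calB$-counterparts; Remark \ref{rk:singular-weights} ensures that $-\mu - 2\rho$ satisfies the stabilizer condition of Theorem \ref{thm:thmBMR}(ii) for the same parabolic $P$, so both equivalences are defined. The main technical obstacle is the careful cancellation of shifts, the $\sigma^*$-twist, and the line-bundle twists coming from Lemma \ref{lem:duality-Fbot} and \eqref{eq:coind-T}; however, the normalization of $\bbD_\calT^0$ (resp.\ $\bbD_{\calT,\calP}^0$) is set up precisely so that no residual twist survives in the final diagram.
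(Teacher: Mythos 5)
Your approach is essentially the paper's: Proposition~\ref{prop:BMR-duality-Fr} is stated there with the one-line remark that ``the same proof works,'' and that proof is the direct splitting-bundle argument you outline (carried out explicitly in the graded case in Proposition~\ref{prop:graded-duality}), namely transferring the identity $\calM^{\lambda}\cong\sigma^*(\calM^{-\lambda-2\rho})^{\star}$ of [BMR2, Lemma~3.0.6] through the commutation $Rp_*\circ\bbD_{\calT}^0\cong\bbD_{\wfg}[d]\circ Rp_*$ from Lemma~\ref{lem:duality-Fbot}. One small remark: your opening reduction via ${\rm Incl}$ and $Rp_*$ is only a consistency check, since those functors are not conservative, so the logical weight of the argument rests entirely on the second paragraph --- which you in effect acknowledge when you say the isomorphism must be ``upgraded'' to the dg-level.
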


\subsection{Localization and Koszul duality: regular case} \label{ss:localization-Koszul-regular}

Let us apply the constructions of \S \ref{ss:lkd} to the case $X=\calB^{(1)}$, $E=(\fg^* \times \calB)^{(1)}$, $F=\wcalN^{(1)}$. Under our assumptions on $p$, there exists an isomorphism of $G$-modules $\fg \cong \fg^*$. We fix such an isomorphism, and use it to identify the vector bundles $E$ and $E^*$. Under this identification, the orthogonal of $\wcalN$ is $\wfg$. Hence Theorem \ref{thm:lkd} yields an equivalence \[ \kappa_{\calB} : \ \DGCoh^{\gr}(\wcalN^{(1)}) \ \xrightarrow{\sim} \ \DGCoh^{\gr}(\bigl( \wfg \, \rcap_{\fg^* \times \calB} \, \calB \bigr)^{(1)}). \]

Now, fix a regular weight $\lambda \in \bbX$. Using Theorem \ref{thm:thmBMR}$\ri$ and Theorem \ref{thm:localization-restricted}$\ri$, we have the following situation: \[
\xymatrix{ & \DGCoh^{\gr}(\wcalN^{(1)})
  \ar[r]^-{\sim}_-{\kappa_{\calB}}
  \ar[d]_-{\For}^-{\eqref{eq:For-DGCoh-F}}
& \DGCoh^{\gr}((\wfg \, \rcap_{\fg^* \times \calB} \, \calB)^{(1)})
\ar[d]^-{\For}_-{\eqref{eq:For-DGCoh-rcap}} \\ \calD^b
\Coh_{\calB^{(1)}}(\wcalN^{(1)}) \ar@{^{(}->}[r]
\ar[d]_-{\wr}^-{\epsilon^{\calB}_{\lambda}} & \calD^b \Coh(\wcalN^{(1)}) &
\DGCoh((\wfg \, \rcap_{\fg^* \times \calB} \, \calB)^{(1)})
\ar[d]^-{\wr}_-{\widehat{\gamma}^{\calB}_{\lambda}} \\ \calD^b
\Mod^{\fig}_{0}((\calU \fg)^{\lambda}) & & \calD^b
\Mod^{\fig}_{\lambda}((\calU \fg)_0). } \] Let $y \in W_{\aff}$ be the unique element such that $y^{-1} \bullet \lambda \in C_0$. Let also \[ W^0:=\{ w \in W_{\aff}' \mid w \bullet C_0 \ \text{contains a restricted dominant weight} \}. \] It is well-known (see \cite[\S 4.4]{Ri} and references therein) that the simple objects in the categories $\Mod^{\fig}_{0}((\calU \fg)^{\lambda})$ and $\Mod^{\fig}_{\lambda}((\calU \fg)_0)$ are parametrized by $W^0$: they are the $\calU \fg$-modules induced by the $G$-modules $L(wy^{-1} \bullet \lambda)$. For any $w \in W^0$, we set \[ \fL^y_w \ := \ (\epsilon^{\calB}_{\lambda})^{-1} L(wy^{-1} \bullet \lambda) \quad \in \calD^b \Coh_{\calB^{(1)}}(\wcalN^{(1)}). \] (This object only depends on $y$, and not on $\lambda$.) Similarly, for $w \in W^0$, we denote by $P(wy^{-1} \bullet \lambda)$ the projective cover of $L(wy^{-1} \bullet \lambda)$ in $\Mod^{\fig}_{\lambda}((\calU \fg)_0)$, and we set \[ \fP^y_w \ := \ (\widehat{\gamma}^{\calB}_{\lambda})^{-1} P(wy^{-1} \bullet \lambda) \quad \in \DGCoh((\wfg \, \rcap_{\fg^* \times \calB} \, \calB)^{(1)}). \] (Again, this object only depends on $y$, and not on $\lambda$.) For simplicity, here and below, when $y=1$ we omit it from the notation.

The ``key-result'' of \cite{Ri} is the following (see \cite[Theorems 4.4.3 and 8.5.2]{Ri}). Let $\tau_0=t_{\rho} \cdot w_0 \in W_{\aff}'$, and consider the natural functor \[ \zeta : \calD^b \Coh^{\Gm}_{\calB^{(1)}}(\wcalN^{(1)}) \hookrightarrow \calD^b \Coh^{\Gm}(\wcalN^{(1)}) \xrightarrow{\xi^{-1}} \DGCoh^{\gr}(\wcalN^{(1)}). \]

\begin{thm} \label{thm:Ri}

Assume $p>h$ is such that Lusztig's conjecture is true. 

There is a unique choice of lifts $\fP^{y,\gr}_v \in \DGCoh^{\gr}((\wfg \, \rcap_{\fg^* \times \calB} \, \calB)^{(1)})$ of $\fP^y_v$, respectively $\fL^{y,\gr}_v \in \calD^b \Coh^{\Gm}_{\calB^{(1)}}(\wcalN^{(1)})$ of $\fL_v^y$ ($v \in W^0$), such that for all $w \in W^0$, \begin{equation*} \kappa_{\calB}^{-1} \fP^{y,\gr}_{\tau_0 w} \ \cong \ \zeta(\fL^{y,\gr}_w)
\otimes_{\calO_{\calB^{(1)}}} \calO_{\calB^{(1)}}(-\rho) \qquad \text{in }\DGCoh^{\gr}(\wcalN^{(1)}). \end{equation*}

\end{thm}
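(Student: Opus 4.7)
My strategy is to define $\fL^{y,\gr}_w$ directly as a $\Gm$-equivariant lift of $\fL^y_w$, then \emph{define} $\fP^{y,\gr}_{\tau_0 w}$ by the desired formula $\kappa_\calB(\zeta(\fL^{y,\gr}_w) \otimes \calO(-\rho))$, and prove \emph{a posteriori} that this last object is a bona fide graded lift of the indecomposable projective $\fP^y_{\tau_0 w}$. Uniqueness then reduces to the fact that in either category graded lifts of indecomposable objects are unique up to isomorphism and internal shift, so only our initial normalization of $\fL^{y,\gr}_w$ is at issue.

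\textbf{Step 1: Graded simples.} Each $\fL^y_w$ is (set-theoretically) supported on the zero section $\calB^{(1)} \subset \wcalN^{(1)}$. Since the $\Gm$-action dilates the fibers of $\wcalN^{(1)} \to \calB^{(1)}$, it is trivial on the zero section, and every coherent sheaf supported there carries a canonical $\Gm$-equivariant structure, unique up to internal shift. Fixing a normalization (say, placing the generator of the top cohomology sheaf in internal degree $0$) pins down $\fL^{y,\gr}_w \in \calD^b\Coh^{\Gm}_{\calB^{(1)}}(\wcalN^{(1)})$.

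\textbf{Step 2: Identification as projective.} Set $\fP^{y,\gr}_{\tau_0 w} := \kappa_\calB(\zeta(\fL^{y,\gr}_w) \otimes_{\calO_{\calB^{(1)}}} \calO_{\calB^{(1)}}(-\rho))$; by Theorem \ref{thm:lkd} this lies in $\DGCoh^{\gr}((\wfg\,\rcap_{\fg^* \times \calB}\,\calB)^{(1)})$. The core of the proof is to show that $\For(\fP^{y,\gr}_{\tau_0 w})$, transported via $\widehat{\gamma}^{\calB}_\lambda$, is the indecomposable projective cover $P(\tau_0 w y^{-1} \bullet \lambda)$ in $\Mod^{\fig}_\lambda((\Ug)_0)$. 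The ingredients are: (a)~the Andersen--Jantzen--Soergel-type Koszulity of $(\Ug)_0^{\widehat{\lambda}}$, which provides an abstract Koszul duality interchanging simples and indecomposable projectives; (b)~a matching between this \emph{algebraic} Koszul duality on modules and the \emph{geometric} $\kappa_\calB$, via the localization equivalences $\epsilon^\calB_\lambda$ and $\widehat{\gamma}^\calB_\lambda$; (c)~the combinatorial role of $\tau_0 = t_\rho w_0$, which intertwines the indexing of simples with the indexing of indecomposable projectives within a single $W_{\aff}'$-orbit (it is the ``longest element'' shifting the fundamental alcove to the opposite one through $\rho$); (d)~the line-bundle twist by $\calO(-\rho)$, which absorbs the discrepancy between the ``regular'' splitting bundle $\calM^\lambda$ and the natural splitting underlying $\epsilon^\calB_\lambda$.

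\textbf{Main obstacle.} The hard step is (b)--(c) above: verifying that $\kappa_\calB$ genuinely realizes the algebraic Koszul duality on $\calU\fg$-modules, so that graded simples on the $\wcalN^{(1)}$-side match graded indecomposable projectives on the $(\wfg \rcap_{\fg^*\times\calB}\calB)^{(1)}$-side with the exact shift encoded by $\tau_0$. This is where Lusztig's conjecture enters: it computes the multiplicities $[P(\mu) : L(\nu)\langle i\rangle]$ in terms of Kazhdan--Lusztig polynomials, and these multiplicities must match the graded Euler characteristics extracted geometrically from $\kappa_\calB$ applied to $\zeta(\fL^{y,\gr}_w) \otimes \calO(-\rho)$. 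Once that numerical match is established, the existence, uniqueness, and indecomposability claims follow from standard graded-module theory, completing the proof.
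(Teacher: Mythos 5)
Your plan correctly identifies the \emph{shape} of the result — normalize the graded simples, push them through $\kappa_\calB$, argue the output is the graded projective — but the writeup does not constitute a proof: the central step is asserted rather than established, and two of your ``ingredients'' have concrete problems.

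First, a factual slip in Step 1. Objects of $\calD^b \Coh_{\calB^{(1)}}(\wcalN^{(1)})$ are only \emph{set-theoretically} supported on the zero section, not scheme-theoretically, so it is not true that they ``carry a canonical $\Gm$-equivariant structure'': existence of a lift requires an argument (e.g.\ that the $\Gm$-action is contracting to the zero section, or, equivalently on the algebraic side, that the algebra admits a non-negative grading). Moreover ``canonical'' and ``unique up to internal shift'' contradict each other; the right statement is that a lift of an indecomposable object, \emph{if it exists}, is unique up to shift. Your normalization ``top cohomology in degree $0$'' is then an arbitrary choice, and a priori there is no reason it is the one satisfying the displayed relation.

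Second, and more seriously, Step 2 restates the theorem as its own hard step. You \emph{define} $\fP^{y,\gr}_{\tau_0 w} := \kappa_\calB\bigl(\zeta(\fL^{y,\gr}_w) \otimes \calO_{\calB^{(1)}}(-\rho)\bigr)$, and then must show that $\For(\fP^{y,\gr}_{\tau_0 w})$ corresponds under $\widehat\gamma^{\calB}_\lambda$ to the indecomposable projective. You offer ``ingredients'' (a)--(d) but never assemble them into an argument. In particular ingredient (a) is circular in the framework of the cited paper: the present article explicitly says that the Koszulity of $(\Ug)_0^{\widehat\lambda}$ is \emph{deduced} from this very theorem (see the paragraph before Theorem~\ref{thm:koszul-grading} and the reference to \cite[Theorem~9.5.1]{Ri}), so you cannot cite an ``AJS-type Koszulity interchanging simples and projectives'' as a black box without first explaining why the AJS grading matches the geometric $\Gm$-weight grading — which is essentially the content of the theorem you are trying to prove. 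Finally, the assertion that ``matching graded Euler characteristics'' supplied by Lusztig's conjecture pins down the isomorphism class is false in a triangulated category: two objects can have the same graded Euler characteristic without being isomorphic, so even a perfect numerical match does not finish the argument. The actual proof in \cite{Ri} proceeds by a nontrivial induction (via the affine braid group / translation functors on both sides of $\kappa_\calB$), with Lusztig's conjecture entering through the highest-weight structure rather than a mere numerical coincidence; none of that machinery appears in your proposal.

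Your observation about uniqueness is nearly right but also slightly off: the relation in the theorem does not just fix the $\fP$'s given a normalization of the $\fL$'s. Since $\zeta$ sends $\langle m\rangle$ to $\langle m\rangle[m]$ (Remark~\ref{rk:shifts-grading}), any nontrivial internal shift of $\fL^{y,\gr}_w$ introduces a cohomological shift that cannot be absorbed by shifting $\fP^{y,\gr}_{\tau_0 w}$; so the relation in fact pins down \emph{both} families once existence is known, without reference to any external normalization. This is a small point, but it shows the uniqueness part needs to be argued from the structure of the equivalence, not outsourced to a ``standard'' indecomposability lemma alone.
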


In \cite[Theorem 9.5.1]{Ri}, we have deduced from Theorem \ref{thm:Ri} the Koszulity of regular blocks of $(\Ug)_0$. Here we give a more ``concrete'' construction of this grading. As above, fix a regular weight $\lambda \in \bbX$, and let $y \in W_{\aff}$ be such that $y^{-1} \bullet \lambda \in C_0$. We have an isomorphism of left $(\Ug)_0^{\widehat{\lambda}}$-modules \begin{equation} (\Ug)_0^{\widehat{\lambda}} \ \cong \ \bigoplus_{w \in W^0} \, P(wy^{-1} \bullet \lambda)^{\oplus n_w^{\lambda}}, \end{equation} where $n_w^{\lambda}=\dim(L(w y^{-1} \bullet \lambda))$. We fix such an isomorphism. (It can be easily checked that nothing below really depends on this choice.)

The right multiplication of $(\Ug)_0^{\widehat{\lambda}}$ on itself induces an isomorphism \[ (\Ug)_0^{\widehat{\lambda}} \ \xrightarrow{\sim} \ \End_{(\Ug)_0^{\widehat{\lambda}}}((\Ug)_0^{\widehat{\lambda}})^{\op}. \] We obtain algebra isomorphisms \begin{multline*} (\Ug)_0^{\widehat{\lambda}} \ \xrightarrow{\sim} \ \End_{(\Ug)_0^{\widehat{\lambda}}}(\bigoplus_{w \in W^0} \, P(wy^{-1} \bullet \lambda)^{\oplus n_w^{\lambda}})^{\op} \\ \underset{\sim}{\xrightarrow{(\widehat{\gamma}^{\calB}_{\lambda})^{-1}}} \ \End_{\DGCoh((\wfg \rcap_{\fg^* \times \calB} \calB)^{(1)})} \bigl( \bigoplus_{w \in W^0} \, (\fP_w^y)^{\oplus n_w^{\lambda}} \bigr)^{\op}. \end{multline*} Now the right hand side has a natural grading (as an algebra), given by \begin{multline*} \End_{\DGCoh((\wfg \rcap_{\fg^* \times \calB} \calB)^{(1)})} \bigl( \bigoplus_{w \in W^0} \, (\fP_w^y)^{\oplus n_w^{\lambda}} \bigr) \ \cong \\ \bigoplus_{m \in \bbZ} \, \Hom_{\DGCoh^{\gr}((\wfg \rcap_{\fg^* \times \calB} \calB)^{(1)})}\bigl( \bigoplus_{w \in W^0} \, (\fP_w^{y,\gr})^{\oplus n_w^{\lambda}} , \bigoplus_{w \in W^0} \, (\fP_w^{y,\gr})^{\oplus n_w^{\lambda}} \langle -m \rangle \bigr) \end{multline*} (i.e.~we consider the grading given by the natural structure of $\Gm$-equivariant algebra on a point). Hence we obtain a grading on the algebra $(\Ug)_0^{\widehat{\lambda}}$.

\begin{thm} \label{thm:koszul-grading}

Assume $p>h$ is such that Lusztig's conjecture is true. 

$\ri$ This grading only depends on the image of (the differential of) $\lambda$ in $\fh^*/(W,\bullet)$.

$\rii$ This grading makes $(\Ug)_0^{\widehat{\lambda}}$ a Koszul ring.

\end{thm}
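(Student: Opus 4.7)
The algebra $(\Ug)_0^{\widehat\lambda}$ is identified, via the right regular representation, the chosen decomposition into indecomposable projectives, and the equivalence $\widehat\gamma^{\calB}_\lambda$ of Theorem \ref{thm:localization-restricted}$\ri$, with the opposite endomorphism algebra of $\bigoplus_{w \in W^0}(\fP_w^y)^{\oplus n_w^\lambda}$ in $\DGCoh((\wfg \rcap_{\fg^* \times \calB}\calB)^{(1)})$. The proposed grading is the one pulled back from the graded refinement
\[ B \;:=\; \End_{\DGCoh^{\gr}((\wfg \rcap_{\fg^* \times \calB}\calB)^{(1)})}\Bigl(\bigoplus_{w \in W^0}(\fP_w^{y,\gr})^{\oplus n_w^\lambda}\Bigr)^{\op}, \]
so both parts of the theorem reduce to properties of $B$.

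For part $\rii$, the plan is to verify directly that $B$ is a Koszul ring in the sense of \cite{BGS}. The semisimplicity of $B_0$ and positivity of $B$ should follow from the fact that, under $\kappa_{\calB}$, the objects $\fP_w^{y,\gr}$ correspond (after the twist from Theorem \ref{thm:Ri}) to the graded simples $\zeta(\fL_w^{y,\gr})\otimes\calO(-\rho)$, which satisfy $\Hom^{\leq 0}=0$ between distinct summands and $\Hom^0=\bk$ on each. Koszulity then amounts to the purity condition $\Ext^i_{B\text{-gmod}}(L_v, L_w\langle -j\rangle)=0$ unless $i=j$ for the graded simple $B$-modules $L_v, L_w$. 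Via the Morita-style identification, each such $\Ext$-group is a graded $\Hom$ between the $\fP_w^{y,\gr}$; transferred through $\kappa_{\calB}^{-1}$, it becomes a graded $\Hom$ between the simple lifts $\zeta(\fL_w^{y,\gr})\otimes\calO(-\rho)$ inside $\DGCoh^{\gr}(\wcalN^{(1)})\simeq \calD^b\Coh^{\Gm}(\wcalN^{(1)})$. The required purity is the parity-type vanishing underlying \cite[Theorem 9.5.1]{Ri}; equivalently, one checks that the concrete graded algebra $B$ is isomorphic to the Koszul algebra of \cite{Ri}, from which Koszulity is immediate.

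For part $\ri$, note first that on the lattice $\bbX$ two integral weights have the same image in $\fh^*/(W,\bullet)$ exactly when they lie in a common $W_{\aff}'$-orbit, so $(\Ug)_0^{\widehat\lambda}=(\Ug)_0^{\widehat{\lambda'}}$ as ungraded algebras. Writing $\lambda_0 = y^{-1}\bullet\lambda = y'^{-1}\bullet\lambda' \in C_0$, the multiplicities $n_w^\lambda=\dim L(w\bullet\lambda_0)=n_w^{\lambda'}$ also agree. By part $\rii$, each of the two resulting gradings is Koszul on the common underlying algebra; and since a Koszul grading on a basic ring is unique up to an isomorphism fixing the degree-$0$ part (see \cite{BGS}), the two gradings must coincide.

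The main obstacle is the purity statement driving the $\Ext$-vanishing in part $\rii$: in the setting of this paper it is inherited from \cite{Ri}, but the genuinely new bookkeeping is to match the explicit endomorphism algebra $B$ with the abstract Koszul grading of \cite[Theorem 9.5.1]{Ri}. This requires careful tracking of the $\Gm$-action through the equivalence $\kappa_{\calB}$ and the regrading functor $\xi$, accounting in particular for the twist $\langle m\rangle[-m]$ recorded in Remark \ref{rk:shifts-grading}.
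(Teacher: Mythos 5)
Your proof of part $\rii$ is in essence the paper's: identify $(\Ug)_0^{\widehat\lambda}$ with the graded endomorphism algebra $B$ of $\bigoplus_w(\fP_w^{y,\gr})^{\oplus n_w^{\lambda}}$, observe that $B$ is graded Morita equivalent to the endomorphism algebra of $\bigoplus_w \fP_w^{y,\gr}$, and reduce Koszulity to the parity vanishing already established in \cite[Theorem~9.5.1]{Ri} via the isomorphism
\[
\Hom_{\DGCoh^{\gr}}(\fP_w^{y,\gr},\fP_v^{y,\gr}\langle -m\rangle)\;\cong\;\Ext^m_{(\Ug)^0}\bigl(L(\tau_0 w\bullet\lambda),L(\tau_0 v\bullet\lambda)\bigr).
\]
That much is fine.

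The genuine gap is a circularity between your treatments of $\ri$ and $\rii$. You derive $\ri$ from $\rii$: two weights with the same image in $\fh^*/(W,\bullet)$ give the same underlying algebra, and ``a Koszul grading is unique up to isomorphism.'' But your argument for $\rii$ only identifies $B$ with the Koszul algebra of \cite[Theorem~9.5.1]{Ri} after the bookkeeping that makes the graded Hom-spaces between the $\fP_w^{y,\gr}$ match those between the $\fP_w^{\gr}$ (i.e.\ the $y=1$ objects that appear in \cite[\S 9.3, Theorem~9.5.1]{Ri}). That matching is precisely the content of part $\ri$: it is the step where one shows that the lifts $\fP_w^{y,\gr}$, constructed from the $\fP_w^{\gr}$ as in \cite[\S 8.5]{Ri}, give the same grading as the $\fP_w^{\gr}$. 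So your proof of $\rii$ for general regular $\lambda$ secretly invokes $\ri$, while $\ri$ invokes $\rii$. The paper avoids this by proving $\ri$ \emph{directly} from the explicit construction of $\fP_w^{y,\gr}$ in terms of $\fP_w^{\gr}$ (\cite[\S 8.5]{Ri}), without any Koszulity input, and only then uses $\ri$ to reduce $\rii$ to the case $\lambda\in C_0$ where \cite[Theorem~9.5.1]{Ri} applies on the nose.

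A secondary, smaller point on $\ri$: the uniqueness result you gesture at (\cite[Corollary~2.5.2]{BGS}) gives that two Koszul gradings on the same ring are related by \emph{some} graded ring isomorphism; it does not single out a preferred one, and $(\Ug)_0^{\widehat\lambda}$ is not basic, so you would also need to pass through a graded Morita equivalence. Even setting aside the circularity, this would give a weaker and less explicit statement than the paper's $\ri$, which identifies the gradings on the nose from the construction.
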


\begin{proof} $\ri$ This follows easily from the construction of the objects $\fP^{y,\gr}_w$ in terms of the objects $\fP^{\gr}_w$, see \cite[\S 8.5]{Ri}.

$\rii$ Thanks to $\ri$, we can assume $y=1$, i.e.~$\lambda \in C_0$. Using the isomorphisms of \cite[\S 9.3]{Ri}, for any $v,w$ and $m$ we have \[ \Hom_{\DGCoh^{\gr}((\wfg \rcap_{\fg^* \times \calB} \calB)^{(1)})}\bigl( \fP_w, \fP_v \langle -m \rangle \bigr) \ \cong \ \Ext^m_{(\Ug)^0}(L(\tau_0 w \bullet \lambda),L(\tau_0 v \bullet \lambda)). \] Hence this grading is non-negative, and its $0$-part is isomorphic to \[ \prod_{w \in W^0} \mathrm{Mat}_{n_w^{\lambda}}(\bk), \] hence is semisimple. Then, by \cite[Proposition 2.1.3]{BGS}, it is enough to prove that for any simple graded modules $L_1$ and $L_2$ concentrated in (internal) degree $0$, \begin{equation} \label{eq:koszul-condition} \Ext^i_{\Mod^{\gr}((\Ug)_0^{\widehat{\lambda}})}(L_1,L_2 \langle j \rangle)=0 \quad \text{unless } i=j. \end{equation} However, $(\Ug)_0^{\widehat{\lambda}}$ is clearly Morita equivalent, as a graded ring, to the graded ring \begin{multline*} \End_{\DGCoh((\wfg \rcap_{\fg^* \times \calB} \calB)^{(1)})} \bigl( \bigoplus_{w \in W^0} \, \fP_w \bigr)^{\op} \\ \cong \ \Bigl( \bigoplus_{m \in \bbZ} \, \Ext^m_{(\Ug)^0} \bigl( \bigoplus_{w \in W^0} L(w \bullet \lambda), \bigoplus_{w \in W^0} L(w \bullet \lambda) \bigr) \Bigr)^{\op}. \end{multline*} (Here the grading on the left hand side is defined as above.) The latter graded ring is Koszul by \cite[Theorem 9.5.1 and its proof]{Ri}. Hence it satisfies condition \eqref{eq:koszul-condition}. It follows that the same is true for $(\Ug)_0^{\widehat{\lambda}}$. (Note that the Morita equivalence under consideration preserves modules concentrated in internal degree $0$.)\end{proof}

\subsection{Localization and Koszul duality: singular case}

Let $\mu,P$ be as in Theorem \ref{thm:thmBMR}$\rii$. Let also $\lambda \in \bbX$ be a regular weight such that $\langle \lambda,\alpha^{\vee} \rangle=0$ for any root $\alpha$ of the Levi of $P$, and such that $\mu$ is in the closure of the alcove of $\lambda$. Let $y \in W_{\aff}$ be such that $y^{-1} \bullet \lambda \in C_0$. Then $\mu_0:=y^{-1} \bullet \mu \in \overline{C_0}.$

As in \cite[\S 1.10]{BMR2} and \cite[\S 10.1]{Ri}, we denote by $\fD^{\lambda}_{\calP}$ the sheaf of $\lambda$-twisted differential operators on $\calP$, and we define $U^{\lambda}_{\calP}:=\Gamma(\calP,\fD^{\lambda}_{\calP})$. The action of $G$ on $G/P$ induces an algebra morphism \[ \phi_{\calP}^{\lambda} : (\Ug)^{\lambda} \to U_{\calP}^{\lambda} \] (see \cite[\S 1.10.7]{BMR2}). We consider the following two conditions: \begin{eqnarray} \label{eq:highervanishing} & R^i \Gamma(\fD_{\calP}^{\lambda})=0 \quad \text{for} \ i>0, & \\ \label{eq:phisurjective} & \phi^{\lambda}_{\calP} \ \text{is surjective.} & \end{eqnarray} It is known that both of these conditions are satisfied for $p \gg 0$ (see \cite[Footnotes 21 and 22]{Ri} for details).

The morphism $\widetilde{\pi}_{\calP} : \wfg \to \wfg_{\calP}$ induces a morphism of dg-schemes \[ \widehat{\pi}_{\calP} : (\wfg
\, \rcap_{\fg^* \times \calB} \, \calB)^{(1)} \to (\wfg_{\calP}
\, \rcap_{\fg^* \times \calP} \, \calP)^{(1)}. \] Consider the associated inverse image functors \begin{align*} L(\widehat{\pi}_{\calP})^* : \DGCoh((\wfg_{\calP}
\, \rcap_{\fg^* \times \calP} \, \calP)^{(1)}) \ & \to \ \DGCoh((\wfg
\, \rcap_{\fg^* \times \calB} \, \calB)^{(1)}), \\ L(\widehat{\pi}_{\calP,\Gm})^* : \DGCoh^{\gr}((\wfg_{\calP}
\, \rcap_{\fg^* \times \calP} \, \calP)^{(1)}) \ & \to \ \DGCoh^{\gr}((\wfg
\, \rcap_{\fg^* \times \calB} \, \calB)^{(1)}), \end{align*} see \cite[\S\S 1.7 and 5.4]{Ri}.

Recall the equivalence \[ \widehat{\gamma}^{\calP}_{\mu}: \DGCoh((\wfg_{\calP}
\, \rcap_{\fg^* \times \calP} \, \calP)^{(1)}) \ \xrightarrow{\sim} \ \calD^b
\Mod^{\fig}_{\mu}((\calU \fg)_0) \] of Theorem \ref{thm:localization-restricted}$\rii$. Let $W^0_{\mu} \subset W^0$ by the subset of elements $w$ such that $w \bullet \mu_0$ is in the upper closure of $w \bullet C_0$. The simple objects in the category $\Mod^{\fig}_{\mu}((\Ug)_0)$
are the images of the simple $G$-modules $L(w \bullet \mu_0)$ for $w \in
W^0_{\mu}$. We denote by $P(w \bullet \mu_0)$ the projective cover of
$L(w \bullet \mu_0)$. For $w \in W^0_{\mu}$, we set \[ \fP^y_{\calP,w} := (\widehat{\gamma}^{\calP}_{\mu})^{-1} P(w \bullet \mu_0) \quad \in \DGCoh((\wfg_{\calP} \, \rcap_{\fg^* \times \calP} \, \calP)^{(1)}). \]

Assume that $p>h$ is such that Lusztig's conjecture is true. Then we can consider the objects $\fP^{y,\gr}_w$ for $w \in W^0_{\mu}$. It is proved in \cite[(10.2.8)]{Ri} that for $w \in W^0_{\mu}$ there is a unique lift $\fP^{y,\gr}_{\calP,w} \in \DGCoh^{\gr}((\wfg_{\calP} \, \rcap_{\fg^* \times \calP} \, \calP)^{(1)})$ of $\fP^y_{\calP,w}$ such that \begin{equation} \label{eq:fP_calP} \fP^{y,\gr}_w \langle N - N_{\calP} \rangle \ \cong \ L(\widehat{\pi}_{\calP,\Gm})^* \, \fP^{y,\gr}_{\calP,w}. \end{equation} It is proved also in \cite[Theorem 10.2.4]{Ri} that, if moreover $p$ is such that \eqref{eq:highervanishing} and \eqref{eq:phisurjective} are satisfied, these lifts have properties similar to those of Theorem \ref{thm:Ri}. In this paper we will rather use the following characterization.

\begin{lem} \label{lem:characterization-fP_calP}

Assume that $p$ is such that Lusztig's conjecture is true.

For $w \in W^0_{\mu}$, $\fP^{y,\gr}_{\calP,w}$ is the only object of $\DGCoh^{\gr}((\wfg_{\calP} \, \rcap_{\fg^* \times \calP} \, \calP)^{(1)})$ (up to isomorphism) such that \eqref{eq:fP_calP} is satisfied.

\end{lem}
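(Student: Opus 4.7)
The plan is to reduce uniqueness to the claim that two graded lifts of an indecomposable object differ by an internal shift, and then rule out any nonzero shift by transporting through the Koszul duality $\kappa_{\calB}$ and using the cohomological concentration of simple modules. Existence of $\fP^{y,\gr}_{\calP,w}$ satisfying \eqref{eq:fP_calP} is already in \cite[(10.2.8)]{Ri}.

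Suppose $\calF$ is a second graded lift of $\fP^y_{\calP,w}$ satisfying \eqref{eq:fP_calP}. The object $\fP^y_{\calP,w}$ is indecomposable, since through $\widehat{\gamma}^{\calP}_{\mu}$ it corresponds to the indecomposable projective $P(w \bullet \mu_0)$ whose endomorphism ring is finite dimensional and local. A standard Krull--Schmidt argument then shows that any two graded lifts of an indecomposable object of $\DGCoh((\wfg_{\calP} \, \rcap_{\fg^* \times \calP} \, \calP)^{(1)})$ differ by exactly one internal shift, so $\calF \cong \fP^{y,\gr}_{\calP,w} \langle k \rangle$ for some $k \in \bbZ$.

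Applying $L(\widehat{\pi}_{\calP,\Gm})^*$, which commutes with internal shifts, and using \eqref{eq:fP_calP} both for $\fP^{y,\gr}_{\calP,w}$ and for $\calF$, we obtain
\[
\fP^{y,\gr}_w \langle N - N_{\calP} \rangle \ \cong \ L(\widehat{\pi}_{\calP,\Gm})^* \calF \ \cong \ \fP^{y,\gr}_w \langle N - N_{\calP} + k \rangle,
\]
so $\fP^{y,\gr}_w \cong \fP^{y,\gr}_w \langle k \rangle$. To force $k=0$, write $w = \tau_0 v$ with $v \in W^0$ and apply $\kappa_{\calB}^{-1}$: by Theorem \ref{thm:Ri} this gives $\zeta(\fL^{y,\gr}_v) \cong \zeta(\fL^{y,\gr}_v) \langle k \rangle$. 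Then apply the forgetful functor $\For$ of \eqref{eq:For-DGCoh-F}: by Remark \ref{rk:shifts-grading} the internal shift becomes a cohomological shift, yielding $\fL^y_v \cong \fL^y_v [-k]$ in $\calD^b \Coh_{\calB^{(1)}}(\wcalN^{(1)})$. Via $\epsilon^{\calB}_{\lambda}$ this reads $L(vy^{-1} \bullet \lambda) \cong L(vy^{-1} \bullet \lambda)[-k]$, and concentration of the simple module in cohomological degree $0$ forces $k=0$, whence $\calF \cong \fP^{y,\gr}_{\calP,w}$.

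The only delicate point is the shift-torsor statement in the second paragraph. I would justify it by transporting the question through $\widehat{\gamma}^{\calP}_{\mu}$ to the module-theoretic side, where the fact that two graded lifts of an indecomposable object with local endomorphism ring differ by a single grading shift is classical.
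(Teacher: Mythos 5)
Your argument has a genuine gap: you open with ``Suppose $\calF$ is a second graded lift of $\fP^y_{\calP,w}$ satisfying \eqref{eq:fP_calP}'', but the hypothesis \eqref{eq:fP_calP} does \emph{not} tell you that $\For(\calF) \cong \fP^y_{\calP,w}$. It only tells you that $L(\widehat{\pi}_{\calP,\Gm})^*\calF \cong \fP^{y,\gr}_w\langle N - N_{\calP}\rangle$, and the functor $L(\widehat{\pi}_{\calP,\Gm})^*$ is not conservative, so this does not determine $\For(\calF)$. The portion you do prove --- uniqueness among graded lifts of $\fP^y_{\calP,w}$ satisfying \eqref{eq:fP_calP} --- is already what the paper cites from \cite[(10.2.8)]{Ri} in the sentence immediately preceding the lemma. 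The genuine new content of Lemma~\ref{lem:characterization-fP_calP} is precisely the part you assume.

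The paper's proof supplies this missing step by working on the module side: set $P := \widehat{\gamma}^{\calP}_{\mu}(\For\calF)$. Under the localization equivalences, $L(\widehat{\pi}_{\calP})^*$ corresponds to the translation functor $T_{\mu}^{\lambda}$, so the (ungraded) hypothesis gives $T_{\mu}^{\lambda}(P) \cong P(w\bullet\lambda_0)$. By biadjointness of translation, $P$ is a direct summand of the projective $T_{\lambda}^{\mu}T_{\mu}^{\lambda}(P) \cong T_{\lambda}^{\mu}P(w\bullet\lambda_0)$, hence projective, hence a direct sum of the $P(v\bullet\mu_0)$'s. Then the known computation of $T_{\mu}^{\lambda}$ on each indecomposable projective (cited as \cite[Equation (10.2.7)]{Ri}) together with Krull--Schmidt forces $P \cong P(w\bullet\mu_0)$, i.e.\ $\For(\calF)\cong\fP^y_{\calP,w}$. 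Only now does the unique-graded-lift argument (the step you carried out, or simply the citation of \cite[(10.2.8)]{Ri}) apply. Your shift-pinning argument via $\kappa_{\calB}$ and cohomological concentration is fine as a way to nail the internal shift, but it sits downstream of the translation-functor step you have skipped; without that step the proof does not establish what the lemma claims.
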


\begin{proof} We have already explained that $\fP^{y,\gr}_{\calP,w}$ satisfies this condition. Now it is enough to prove that any object $\fP$ of $\DGCoh((\wfg_{\calP} \, \rcap_{\fg^* \times \calP} \, \calP)^{(1)})$ such that $\fP^{y}_w \cong L(\widehat{\pi}_{\calP})^* \fP$ is isomorphic to $\fP^y_{\calP,w}$. Consider $P:=\widehat{\pi}^{\calP}_{\mu}(\fP)$. Then by assumption and \cite[Equations (10.2.6) and (10.2.7)]{Ri} we have $T_{\mu}^{\lambda}(P) \cong P(w \bullet \lambda_0)$ (where $T_{\mu}^{\lambda}$ is the translation functor, see \cite[\S 4.3]{Ri}). Hence $P$ is projective, i.e.~a direct sum of the $P(v \bullet \mu_0)$'s ($v \in W^0_{\mu}$). As \cite[Equation (10.2.7)]{Ri} is satisfied by every such $v$, we conclude that $P=P(w \bullet \mu_0)$. This finishes the proof.\end{proof}

As in \S \ref{ss:localization-Koszul-regular}, the choice of these lifts provides a grading on the algebra $(\Ug)_0^{\widehat{\mu}}$. And the same proof as that of Theorem \ref{thm:koszul-grading} gives the following.

\begin{thm} \label{thm:koszul-grading-singular}

Assume $p>h$ is such that Lusztig's conjecture is true, and conditions \eqref{eq:highervanishing} and \eqref{eq:phisurjective} are satisfied. 

This grading makes $(\Ug)_0^{\widehat{\mu}}$ a Koszul ring.

\end{thm}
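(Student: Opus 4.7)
The plan is to follow the template of the proof of Theorem \ref{thm:koszul-grading} verbatim, replacing $\calB$ by $\calP$, the collection $\{\fP^{y,\gr}_w\}_{w \in W^0}$ by $\{\fP^{y,\gr}_{\calP,w}\}_{w \in W^0_\mu}$, and the regular Koszulity result \cite[Theorem 9.5.1]{Ri} by its singular parabolic analogue established in \cite[\S 10]{Ri}. Concretely, the grading on $(\Ug)_0^{\widehat{\mu}}$ is transported, via the parabolic localization equivalence $\widehat{\gamma}^{\calP}_{\mu}$ of Theorem \ref{thm:localization-restricted}\rii{}, from the natural $\bbZ$-grading on
\[ \End_{\DGCoh^{\gr}((\wfg_{\calP} \, \rcap_{\fg^* \times \calP} \, \calP)^{(1)})} \Bigl( \bigoplus_{w \in W^0_\mu} (\fP^{y,\gr}_{\calP,w})^{\oplus n_w^{\mu}} \Bigr)^{\op}, \]
where the $m$-th graded piece is made of morphisms involving an internal shift $\langle -m \rangle$.

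The first step is to identify, as in \cite[\S 9.3]{Ri}, the graded Hom spaces between the $\fP^{y,\gr}_{\calP,w}$'s with Ext groups of simples, now taken in the singular block of $(\Ug)^{\mu}$. This yields that the grading is concentrated in non-negative degrees, with degree-$0$ part isomorphic to the semisimple algebra $\prod_{w \in W^0_\mu} \mathrm{Mat}_{n_w^\mu}(\bk)$. By \cite[Proposition 2.1.3]{BGS}, Koszulity then reduces to the Ext-vanishing condition \eqref{eq:koszul-condition} for simple graded modules concentrated in internal degree $0$. As in the regular case, $(\Ug)_0^{\widehat{\mu}}$ is Morita equivalent as a graded ring to the endomorphism algebra of $\bigoplus_{w \in W^0_\mu} \fP^{y,\gr}_{\calP,w}$, and this Morita equivalence preserves modules concentrated in internal degree $0$, so it suffices to verify Koszulity of the latter endomorphism algebra. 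This is precisely the singular parabolic Koszulity statement established in \cite[\S 10]{Ri} under our hypotheses on $p$ (Lusztig's conjecture together with \eqref{eq:highervanishing} and \eqref{eq:phisurjective}).

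The main obstacle is the first step, namely the singular analogue of the Hom-Ext identification of \cite[\S 9.3]{Ri}. Two routes are available: either redo the cohomological computation directly on $\wfg_{\calP}^{(1)}$, controlling the interaction of $R(p_{\calP})_*$ with the graded lifts; or, using \eqref{eq:fP_calP}, pull the Hom computation back to $(\wfg \, \rcap_{\fg^* \times \calB} \, \calB)^{(1)}$ via $L(\widehat{\pi}_{\calP,\Gm})^*$, apply the known identification for the $\fP^{y,\gr}_w$, and then descend back through the translation functor $T_\mu^\lambda$ using \cite[Equations (10.2.6), (10.2.7)]{Ri} and Lemma \ref{lem:characterization-fP_calP}. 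The second route is more economical, but one must keep careful track of the internal shift $\langle N - N_{\calP} \rangle$ in \eqref{eq:fP_calP} to conclude that the resulting grading is indeed non-negative and that degree $0$ contains only scalar endomorphisms of each summand.
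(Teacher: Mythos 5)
Your proposal matches the paper's approach: the paper's entire proof of Theorem \ref{thm:koszul-grading-singular} is the single sentence preceding the statement — ``the same proof as that of Theorem \ref{thm:koszul-grading} gives the following'' — and your outline fills in what that transposition actually involves (replace $\calB$ by $\calP$, $W^0$ by $W^0_\mu$, $\fP^{y,\gr}_w$ by $\fP^{y,\gr}_{\calP,w}$, and \cite[Theorem 9.5.1]{Ri} by its parabolic analogue from \cite[\S 10]{Ri}), then runs the same chain: non-negativity and semisimplicity of the degree-zero part, reduction via \cite[Proposition 2.1.3]{BGS}, and Morita transfer of Koszulity from the graded endomorphism algebra of the projective lifts.

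One point deserves a small correction. Where you say the graded Hom spaces between the $\fP^{y,\gr}_{\calP,w}$'s should be identified with ``Ext groups of simples, now taken in the singular block of $(\Ug)^{\mu}$,'' the right-hand side of the parabolic Hom--Ext identification is not $\Mod^{\fig}_0((\Ug)^{\mu})$ but rather $\Mod^{\fig}_0(U^{\lambda}_{\calP})$, where $U^{\lambda}_{\calP}=\Gamma(\fD^{\lambda}_{\calP})$: the geometric side $\Coh_{\calP^{(1)}}(\wcalN_{\calP}^{(1)})$ is matched by the Beilinson--Bernstein--type equivalence (Theorem \ref{thm:BMR2}) to $U^{\lambda}_{\calP}$-modules, and the relevant simples are the $L_{\calP}(v\bullet\lambda_0)$. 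The surjectivity hypothesis \eqref{eq:phisurjective} guarantees that the simple $U^{\lambda}_{\calP}$-modules restrict to simple $(\Ug)^{\lambda}$-modules, but it does not make the two Ext algebras coincide, so one really must compute in $U^{\lambda}_{\calP}$-mod. This is what \cite[\S 10]{Ri} does (and is where conditions \eqref{eq:highervanishing} and \eqref{eq:phisurjective} enter), so with that substitution your ``first route'' is the one the reference carries out; your alternative ``second route'' via $L(\widehat{\pi}_{\calP,\Gm})^*$ and translation functors is consistent but is more bookkeeping than is needed once the parabolic identification is in hand.
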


\section{Koszul duality and ordinary duality: regular case} \label{sec:koszul-duality-regular}

\subsection{Geometry}

Let us define the various geometric duality functors we are going to use. Recall the definition of $\sigma$ in \S \ref{ss:geometric-duality}. We define \begin{align*} \bfD_{\calT}^{\gr} & := \sigma^* \bbD_{\calT} \langle -3 N \rangle : \DGCoh^{\gr}((\wfg \rcap_{\fg^* \times \calB} \calB)^{(1)}) \xrightarrow{\sim} \DGCoh^{\gr}((\wfg \rcap_{\fg^* \times \calB} \calB)^{(1)}), \\ \bfD_{\calS}^{\gr} & := \sigma^* \bbD_{\calS}[N] \langle -N \rangle : \DGCoh^{\gr}(\wcalN^{(1)}) \xrightarrow{\sim} \DGCoh^{\gr}(\wcalN^{(1)}), \\ \bfD_{\wcalN}^{\gr} & := \sigma^* \bbD_{\wcalN^{(1)}}[2N] \langle -N \rangle : \calD^b \Coh^{\Gm}(\wcalN^{(1)}) \xrightarrow{\sim} \calD^b \Coh^{\Gm}(\wcalN^{(1)}), \\ \bfD_{\wfg}^{\gr} & := \sigma^* \bbD_{\wfg^{(1)}}[d]\langle 2 d - 3N \rangle : \calD^b \Coh^{\Gm}(\wfg^{(1)}) \xrightarrow{\sim} \calD^b \Coh^{\Gm}(\wfg^{(1)}). \end{align*} Here, the dualities $\bbD_{\calT}$ and $\bbD_{\calS}$ are defined as in \S \ref{ss:duality}, for our choice $X=\calB^{(1)}$, $E=(\fg^* \times \calB)^{(1)}$, $F=\wcalN^{(1)}$. With these definitions, in the following diagram the vertical lines commute: \[
\xymatrix{ \DGCoh^{\gr}(\wcalN^{(1)})
  \ar@(ur,ul)[]_-{\bfD_{\calS}^{\gr}} \ar[r]^-{\sim}_-{\kappa_{\calB}}
  \ar[d]_-{\xi}
& \DGCoh^{\gr}((\wfg \, \rcap_{\fg^* \times \calB} \, \calB)^{(1)}) \ar@(ur,ul)[]_-{\bfD_{\calT}^{\gr}}
\ar[d]^-{Rp_*} \\ \calD^b \Coh^{\Gm}(\wcalN^{(1)}) \ar@(dl,dr)[]_-{\bfD_{\wcalN}^{\gr}} &
\calD^b \Coh^{\Gm}(\wfg^{(1)}). \ar@(dl,dr)[]_-{\bfD_{\wfg}^{\gr}} } \] (Use Lemma \ref{lem:duality-T} and Remark \ref{rk:shifts-grading}.) Moreover, by Proposition \ref{prop:compatibility-lkd-duality} we have an isomorphism \begin{equation} \label{eq:isom-lkd-duality} \bfD_{\calT}^{\gr} \circ \kappa_{\calB} \ \cong \ (\kappa \circ \bfD_{\calS}^{\gr}) \otimes_{\calO_{\calB^{(1)}}} \calO_{\calB^{(1)}}(-2\rho). \end{equation}

Our main geometric result is the following. Fix a regular $\lambda \in \bbX$, and let $y,z \in W_{\aff}$ be such that $\lambda \in y \bullet C_0$, $-\lambda-2\rho \in z \bullet C_0$. We denote by \[ \iota_{\lambda} : W^0 \xrightarrow{\sim} W^0 \] the bijection such that $-w_0(w y^{-1} \bullet \lambda) = \iota_{\lambda}(w) z^{-1} \bullet (-\lambda-2\rho)$.

\begin{prop} \label{prop:duality-L-P}

Assume $p>h$ is such that Lusztig's conjecture is true.

For any $w \in W^0$, there are isomorphisms \[ \bfD_{\wcalN}^{\gr}(\fL^{y,\gr}_w) \ \cong \ \fL^{z,\gr}_{\iota_{\lambda}(w)}, \qquad \bfD_{\calT}^{\gr}(\fP^{y,\gr}_{\tau_0 w}) \ \cong \ \fP^{z, \gr}_{\tau_0 \iota_{\lambda}(w)}. \]

\end{prop}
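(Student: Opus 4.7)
The plan is to establish both isomorphisms simultaneously by invoking the uniqueness clause of Theorem \ref{thm:Ri}, together with the compatibility \eqref{eq:isom-lkd-duality} between $\kappa_\calB$ and the dualities $\bfD_\calS^{\gr}$, $\bfD_\calT^{\gr}$. For each $w \in W^0$, I would introduce candidate lifts
\[
\tilde{\fL}_{\iota_\lambda(w)} := \bfD_{\wcalN}^{\gr}(\fL^{y,\gr}_w), \qquad \tilde{\fP}_{\tau_0 \iota_\lambda(w)} := \bfD_{\calT}^{\gr}(\fP^{y,\gr}_{\tau_0 w}).
\]
The first lives in $\calD^b\Coh^{\Gm}_{\calB^{(1)}}(\wcalN^{(1)})$ because $\bbD_{\wcalN^{(1)}}$ preserves the support condition. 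As $w$ runs over $W^0$ the bijection $\iota_\lambda$ produces a full collection of candidates indexed by $W^0$; the task is to show that this collection satisfies the hypotheses characterizing the lifts for the parameter $z$ in Theorem \ref{thm:Ri}.

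First I would verify that the candidates are graded lifts of $\fL^z_{\iota_\lambda(w)}$ and $\fP^z_{\tau_0\iota_\lambda(w)}$. Forgetting the grading, $\bfD_{\wcalN}^{\gr}$ reduces to $\sigma^*\bbD_{\wcalN^{(1)}}[2N]$, which by Proposition \ref{prop:BMR-duality-HC} corresponds under $\epsilon^{\calB}_\lambda$ to module duality $(-)^\vee$; since $L(wy^{-1}\bullet\lambda)^\vee \cong L(-w_0(wy^{-1}\bullet\lambda)) = L(\iota_\lambda(w)z^{-1}\bullet(-\lambda-2\rho))$ by the defining formula for $\iota_\lambda$, this gives the required ungraded identification. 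Proposition \ref{prop:BMR-duality-Fr} handles $\tilde{\fP}$ analogously, using also that the defining formula for $\iota_\lambda$ makes it intertwine left multiplication by $\tau_0$.

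Next I would verify the characterizing relation
\[
\kappa_\calB^{-1}\bigl(\tilde{\fP}_{\tau_0\iota_\lambda(w)}\bigr) \cong \zeta\bigl(\tilde{\fL}_{\iota_\lambda(w)}\bigr) \otimes_{\calO_{\calB^{(1)}}} \calO_{\calB^{(1)}}(-\rho).
\]
Applying $\bfD_\calT^{\gr}$ to the relation in Theorem \ref{thm:Ri} and using \eqref{eq:isom-lkd-duality} gives
\[
\kappa_\calB^{-1}\bfD_\calT^{\gr}(\fP^{y,\gr}_{\tau_0 w}) \cong \bfD_\calS^{\gr}\bigl(\zeta(\fL^{y,\gr}_w) \otimes \calO(-\rho)\bigr) \otimes \calO(-2\rho).
\]
Because $\bfD_\calS^{\gr}$ inverts line bundles pulled back from $\calB^{(1)}$, the right-hand side simplifies to $\bfD_\calS^{\gr}\zeta(\fL^{y,\gr}_w) \otimes \calO(-\rho)$. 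A short calculation using the formula $\xi(\calM\langle m\rangle) = \xi(\calM)\langle m\rangle[-m]$ shows that the cohomological and internal shifts in the definitions of $\bfD_\calS^{\gr}$ and $\bfD_{\wcalN}^{\gr}$ are matched so that $\bfD_\calS^{\gr} \circ \zeta \cong \zeta \circ \bfD_{\wcalN}^{\gr}$; this completes the verification.

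Once both conditions hold, the uniqueness statement in Theorem \ref{thm:Ri} forces $\tilde{\fL}_{\iota_\lambda(w)} \cong \fL^{z,\gr}_{\iota_\lambda(w)}$ and $\tilde{\fP}_{\tau_0\iota_\lambda(w)} \cong \fP^{z,\gr}_{\tau_0\iota_\lambda(w)}$, yielding both claimed isomorphisms. The main obstacle is the bookkeeping in the previous step: the shifts $\langle -3N\rangle$, $[N]\langle -N\rangle$ and $[2N]\langle -N\rangle$ appearing in the definitions of $\bfD_\calT^{\gr}$, $\bfD_\calS^{\gr}$ and $\bfD_{\wcalN}^{\gr}$ must telescope so that only the twist by $\calO(-\rho)$ survives, with no residual internal or homological shift. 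These normalizations were precisely the reason for the choices made at the start of Section \ref{sec:koszul-duality-regular}.
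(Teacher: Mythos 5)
Your proposal is correct in outline and genuinely differs from the paper's proof in one structural respect: you appeal to the \emph{uniqueness} clause of Theorem \ref{thm:Ri} applied to the parameter $z$, whereas the paper never invokes that uniqueness. Instead, the paper argues in two stages: (1) uniqueness of graded lifts of an \emph{indecomposable} module gives $\bfD_{\wcalN}^{\gr}(\fL^{y,\gr}_w)\cong\fL^{z,\gr}_{\iota_\lambda(w)}\langle n\rangle$ for some unknown $n$; (2) pushing this through the Koszul duality and \eqref{eq:isom-lkd-duality} gives $\bfD_{\calT}^{\gr}(\fP^{y,\gr}_{\tau_0 w})\cong\fP^{z,\gr}_{\tau_0\iota_\lambda(w)}\langle n\rangle[n]$, and then Proposition \ref{prop:BMR-duality-Fr} shows the left-hand side is (ungraded) a module concentrated in cohomological degree $0$, which forces $n=0$. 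Your commutativity calculations (the compatibility of $\bfD_\calS^{\gr}$ with $\zeta$ via Remark \ref{rk:shifts-grading}, and the cancellation of line bundle twists) are correct and are exactly what make the paper's step (2) work; so far, so good.

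The gap is in your step verifying that $\tilde\fP_{\tau_0\iota_\lambda(w)}$ is a graded lift of $\fP^z_{\tau_0\iota_\lambda(w)}$. Proposition \ref{prop:BMR-duality-Fr} gives $\For\,\tilde\fP_{\tau_0\iota_\lambda(w)}\mapsto P(\tau_0 wy^{-1}\bullet\lambda)^\vee$ under $\widehat\gamma^\calB_{-\lambda-2\rho}$, but $(-)^\vee$ is contravariant, so a priori $P(\mu)^\vee$ is an \emph{injective hull}, not a projective cover. Identifying it with $P(-w_0\mu)$, as your phrase ``the defining formula for $\iota_\lambda$ makes it intertwine left multiplication by $\tau_0$'' implicitly assumes, is exactly the Frobenius (self-injective) property of $(\Ug)_0^{\widehat\lambda}$, together with the socle--top symmetry of its indecomposable projectives. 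You would need to cite this (e.g.~\cite{Be} or \cite[Proposition I.8.13]{Ja}) to invoke the uniqueness in Theorem \ref{thm:Ri}, since that uniqueness applies only to collections that are lifts of the $\fP^z_v$'s. This makes your argument circular relative to the paper's announced aim: Remark (2) right after the proposition records that Proposition \ref{prop:duality-L-P}, together with Proposition \ref{prop:BMR-duality-Fr}, gives a geometric \emph{re-proof} of the Frobenius property without appealing to \cite{Be}. The paper's proof steers around this by never needing to know what $P(\mu)^\vee$ is as a module---only that it sits in cohomological degree $0$, which is automatic. So either add the explicit citation to the self-injectivity of $(\Ug)_0$ (which makes your proof correct but dependent on an input the paper deliberately avoids), or switch to the paper's ``pin down $n$'' strategy, which needs nothing beyond Proposition \ref{prop:BMR-duality-Fr}.
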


\begin{proof} By definition of $\iota_{\lambda}$, for $w \in W^0$ there is an isomorphism \[ L(w y^{-1} \bullet \lambda)^{\vee} \cong L(\iota_{\lambda}(w) z^{-1} \bullet (-\lambda-2\rho)). \] Recall that, given a finite dimensional graded algebra $A$ and an indecomposable non-graded $A$-module $M$, there is at most one lift of $M$ as a graded $M$-module, up to isomorphism and shift in the grading (see \cite[\S 5.6]{Ri} and references therein). Hence, comparing Proposition \ref{prop:BMR-duality-HC} and the definition of $\bfD_{\wcalN}^{\gr}$, there exists $n \in \bbZ$ such that \[ \bfD_{\wcalN}^{\gr}(\fL^{y,\gr}_w) \ \cong \ \fL^{z,\gr}_{\iota_{\lambda}(w)} \langle n \rangle. \] Then we have \[ \zeta \circ \bfD_{\wcalN}^{\gr}(\fL^{y,\gr}_w) \ \cong \ \zeta(\fL^{z,\gr}_{\iota_{\lambda}(w)}) \langle n \rangle [n]. \] Using Theorem \ref{thm:Ri}, we deduce \[ \zeta \circ \bfD_{\wcalN}^{\gr}(\fL^{y,\gr}_w) \otimes_{\calB^{(1)}} \calO_{\calB^{(1)}}(-\rho) \ \cong \ \kappa_{\calB}^{-1} \fP^{z,\gr}_{\tau_0 \iota_{\lambda}(w)} \langle n \rangle [n]. \] Then, using isomorphism \eqref{eq:isom-lkd-duality}, we obtain \[ \bfD_{\calT}^{\gr}(\fP^{y,\gr}_{\tau_0 w}) \ \cong \ \fP^{z,\gr}_{\tau_0 \iota_{\lambda}(w)} \langle n \rangle [n]. \] However, by Proposition \ref{prop:BMR-duality-Fr}, the object \[ \widehat{\gamma}^{\calB}_{-\lambda-2\rho} \circ \For \circ \bfD_{\calT}^{\gr}(\fP^{y,\gr}_{\tau_0 w}) \ \cong \ (\widehat{\gamma}^{\calB}_{\lambda}(\fP^y_{\tau_0 w}))^{\vee} \] is in degree $0$. (Here, $\For$ is the functor of \eqref{eq:For-DGCoh-rcap}.) Hence $n=0$, which finishes the proof.\end{proof}

\begin{remark} \begin{enumerate}

\item We will not use the first isomorphism (for simple objects). We only include it for completeness.

\item This proposition, together with Proposition \ref{prop:BMR-duality-Fr}, allows to prove the Frobenius property of regular blocks of $(\Ug)_0$ without refering to the general result in \cite{Be}. Moreover, it describes explicitly the duals of indecomposable projectives. For example, for $\lambda$ in $C_0$, $\iota_{\lambda}$ does not depend on $\lambda$, and is given by $\iota_0(t_{\mu} \cdot v)=t_{-w_0 \mu} \cdot w_0 v w_0$. Hence for $w \in W^0$ we obtain an isomorphism \[ P(w \bullet \lambda)^{\vee} \ \cong \ P(\iota_0(w) \bullet (-w_0 \lambda)) = P(-w_0(w \bullet \lambda)). \] In particular, it follows that \[ \soc \, P(w \bullet \lambda) \cong L( w \bullet \lambda). \] This is of course well-known, see e.g.~\cite[Proposition I.8.13]{Ja}.

\end{enumerate} 

\end{remark}

In the rest of this section we deduce from the geometric Proposition \ref{prop:duality-L-P} algebraic statements about Koszul gradings on regular blocks of $(\Ug)_0$.

\subsection{Grading and the natural anti-isomorphism} \label{ss:grading-anti-isom}

Let again $\lambda \in \bbX$ be regular, and let $y \in W_{\aff}$ be such that $\lambda \in y \bullet C_0$.

Consider the restriction $\wcalD^{\widehat{\lambda}}$ of the sheaf of algebras $\wcalD$ to the formal neighborhood of $\calB^{(1)} \times \{\lambda\}$ in $\wfg^{(1)} \times_{\fh^*{}^{(1)}} \fh^*$, which we identify with the formal neighborhood of $\calB^{(1)}$ in $\wfg^{(1)}$. Consider a decomposition \begin{equation} \label{eq:decomposition-M} \calM^{\lambda} \ = \ \bigoplus_{i \in I} \, \calM^{\lambda}_i \end{equation} of the vector bundle $\calM^{\lambda}$ into indecomposable subbundles. Then it follows from the definitions (see e.g.~\cite{BM}) that the collection $\{ (\calM^{\lambda}_i)^{\star} \otimes_{\bk} \Lambda(\fg^{(1)}), \, i \in I \}$ (where the objects are endowed with a Koszul differential) coincides with the collection of the $\fP_w^y$'s.

As in \S \ref{ss:lkd}, $\wfg^{(1)}$ is endowed with an action of $\Gm$, where $t \in \bk^{\times}$ acts by multiplication by $t^{-2}$ along the fibers of the projection $\wfg^{(1)} \to \calB^{(1)}$. It is explained in \cite[\S 6.3]{Ri} how, starting from a $\Gm$-equivariant structure on $\calM^{\lambda}$ (as a sheaf on $\wfg^{(1)}$), one can produce a grading on the algebra $(\Ug)_0^{\widehat{\lambda}}$, and an equivalence of categories \[ \widetilde{\gamma}^{\calB}_{\lambda} : \ \DGCoh^{\gr}((\wfg \rcap_{\fg^* \times \calB} \calB)^{(1)}) \ \xrightarrow{\sim} \ \calD^b \Mod^{\fig,\gr}_{\lambda}((\Ug)_0) \] which is a ``graded version'' of $\widehat{\gamma}^{\calB}_{\lambda}$ (see \cite[Theorem 6.3.4 and Remark 6.3.5]{Ri}). (Here, $\Mod^{\fig,\gr}_{\lambda}((\Ug)_0)$ is the category of finitely generated graded modules over the graded algebra $(\Ug)_0^{\widehat{\lambda}}$.) To choose a $\Gm$-equivariant structure on $\calM^{\lambda}$, it is enough to choose a $\Gm$-equivariant structure on each of the $\calM^{\lambda}_i$'s. Such a structure is unique up to an internal shift. (Existence is proved in \cite[Lemma 6.3.3]{Ri} or in \cite{BM}, and unicity is obvious.) We choose it in such a way that the collection of $\Gm$-equivariant objects $\{ (\calM^{\lambda}_i)^{\star} \otimes_{\bk} \Lambda(\fg^{(1)}), \, i \in I \}$ coincides with the collection of the $\fP^{y,\gr}_w$'s. We denote by $\calM^{\lambda}_{\gr}$ the resulting $\Gm$-equivariant vector bundle. With this choice, by construction the grading on $(\Ug)_0^{\widehat{\lambda}}$ is the Koszul grading of Theorem \ref{thm:koszul-grading}.

Using this one can prove the regular case of point $(1)$ of the main theorem. Recall the isomorphism $\Phi_{\lambda}$ of \eqref{eq:isom-Phi}.

\begin{prop} \label{prop:antiautomorphism-grading}

Assume $p>h$ is such that Lusztig's conjecture is true.

The isomorphism $\Phi_{\lambda} : (\Ug)_0^{\widehat{\lambda}} \xrightarrow{\sim} ((\Ug)_0^{\widehat{-\lambda-2\rho}})^{\op}$ is an isomorphism of graded algebras, where both algebras are endowed with the Koszul grading given by Theorem {\rm \ref{thm:koszul-grading}}.

\end{prop}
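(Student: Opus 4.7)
The plan is to realize $\Phi_\lambda$ geometrically as the anti-isomorphism of graded endomorphism algebras induced by $\bfD_\calT^{\gr}$, and then use Proposition~\ref{prop:duality-L-P} to verify that this realization automatically respects the internal grading. Recall from the discussion preceding Theorem~\ref{thm:koszul-grading} that, as graded algebras,
\[
(\Ug)_0^{\widehat{\lambda}} \ \cong \ \End_{\DGCoh^{\gr}((\wfg \rcap_{\fg^* \times \calB} \calB)^{(1)})}\Bigl(\bigoplus_{w\in W^0} (\fP^{y,\gr}_w)^{\oplus n_w^\lambda}\Bigr)^{\op},
\]
and similarly $(\Ug)_0^{\widehat{-\lambda-2\rho}}$ is identified with the graded endomorphism algebra of $\bigoplus_{w\in W^0} (\fP^{z,\gr}_w)^{\oplus n_w^{-\lambda-2\rho}}$, where $z\in W_{\aff}$ is chosen so that $z^{-1}\bullet(-\lambda-2\rho)\in C_0$. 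It therefore suffices to construct a graded algebra anti-isomorphism between these two endomorphism algebras that agrees with $\Phi_\lambda$ after forgetting the grading.

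Applying Proposition~\ref{prop:duality-L-P} through the reindexing $v=\tau_0 w$, the contravariant equivalence $\bfD_\calT^{\gr}$ sends each graded projective lift $\fP^{y,\gr}_v$ (for $v\in W^0$) to a lift $\fP^{z,\gr}_{f(v)}$ for some bijection $f:W^0\to W^0$, \emph{without any internal shift}. The definition of $\iota_\lambda$ (by $L(wy^{-1}\bullet\lambda)^{\vee}\cong L(\iota_\lambda(w)z^{-1}\bullet(-\lambda-2\rho))$), combined with the fact that dualization preserves dimensions of simple modules, implies that $n_v^\lambda = n_{f(v)}^{-\lambda-2\rho}$. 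Consequently $\bfD_\calT^{\gr}$ induces a graded algebra anti-isomorphism
\[
\widetilde\Phi_\lambda : (\Ug)_0^{\widehat{\lambda}} \xrightarrow{\sim} \bigl((\Ug)_0^{\widehat{-\lambda-2\rho}}\bigr)^{\op}.
\]

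To finish, I would identify $\widetilde\Phi_\lambda$ with $\Phi_\lambda$ on the ungraded level. By Proposition~\ref{prop:BMR-duality-Fr}$\ri$, the functor underlying $\bfD_\calT^{\gr}$ (which differs from $\sigma^*\bbD_\calT^0$ only by internal-degree shifts, invisible ungradedly) corresponds under $\widehat\gamma^{\calB}_\bullet$ to the duality $(-)^\vee$ of modules, which is precisely the anti-equivalence used to define $\Phi_\lambda$. Hence $\widetilde\Phi_\lambda=\Phi_\lambda$ as ungraded anti-isomorphisms, and $\Phi_\lambda$ is graded. The main obstacle is a careful bookkeeping one: one must verify that the permutation $f$ of $W^0$ dictated by Proposition~\ref{prop:duality-L-P} (through the interplay of $\tau_0$ and $\iota_\lambda$) matches the permutation of indecomposable projectives induced by $(-)^\vee$, and that the corresponding multiplicities agree; once this matching is traced through both the graded and ungraded equivalences, the conclusion is immediate.
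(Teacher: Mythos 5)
Your approach is genuinely different from the paper's. The paper works at the level of the splitting bundle $\calM^{\lambda}$: it uses the isomorphism of sheaves of algebras $\wcalD^{\widehat{\lambda}}\cong\sigma^*(\wcalD^{\widehat{-\lambda-2\rho}})^{\op}$ (which at global sections \emph{is} the map inducing $\Phi_\lambda$), combines it with the $\wcalD$-module isomorphism $\calM^{\lambda}\cong\sigma^*(\calM^{-\lambda-2\rho})^{\star}$ from \cite[Lemma 3.0.6(b)]{BMR2}, and then upgrades the latter to a $\Gm$-equivariant isomorphism via Proposition \ref{prop:duality-L-P}. You instead work with the graded endomorphism algebras of $\bigoplus_w(\fP^{y,\gr}_w)^{\oplus n_w^{\lambda}}$ and let $\bfD^{\gr}_{\calT}$ induce the anti-isomorphism directly.

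The gap is in the final step, where you assert $\widetilde\Phi_\lambda=\Phi_\lambda$ ``as ungraded anti-isomorphisms'' because both correspond to $(-)^\vee$. This does not follow from the commutativity of the diagram in Proposition \ref{prop:BMR-duality-Fr} alone. To read off an algebra anti-isomorphism from the contravariant functor $\bfD^{\gr}_{\calT}$ you must choose an isomorphism of objects $\bfD^{\gr}_{\calT}\bigl(\bigoplus(\fP^{y,\gr}_w)^{\oplus n_w^{\lambda}}\bigr)\cong\bigoplus(\fP^{z,\gr}_w)^{\oplus n_w^{-\lambda-2\rho}}$; different choices change $\widetilde\Phi_\lambda$ by conjugation by a unit of $(\Ug)_0^{\widehat{-\lambda-2\rho}}$, and such an inner automorphism is \emph{not} graded in general (only conjugation by a degree-zero unit is). So ``$\widetilde\Phi_\lambda$ graded and $\widetilde\Phi_\lambda\sim\Phi_\lambda$ up to inner'' does not imply $\Phi_\lambda$ is graded. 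The paper circumvents this precisely by descending to the splitting bundle: there the relevant automorphism group $\End_{\wcalD^{\widehat\lambda}}(\calM^\lambda)$ consists of scalars (since $\sheafEnd_{\calO}(\calM^\lambda)\cong\wcalD^{\widehat\lambda}$), so the induced algebra isomorphism is independent of all choices and ``can be easily checked'' to agree with the one giving $\Phi_\lambda$. Your proof would be repaired by replacing the appeal to Proposition \ref{prop:BMR-duality-Fr} with an argument identifying $\widetilde\Phi_\lambda$ and $\Phi_\lambda$ up to conjugation by a \emph{degree-zero} unit — which in effect forces one back to the splitting-bundle computation, or to a careful Frobenius-form bookkeeping that is not ``immediate.''
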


\begin{proof} Consider the automorphism \[ \widetilde{\sigma} : \left\{ \begin{array}{ccc} \wfg^{(1)} \times_{\fh^*{}^{(1)}} \fh^* & \to & \wfg^{(1)} \times_{\fh^*{}^{(1)}} \fh^* \\ (gB,X,\lambda) & \mapsto & (gB,-X,-\lambda-2\rho)
\end{array} \right. . \] As explained in \cite[Lemma 3.0.6(a)]{BMR2}, the natural isomorphism $\calD_{G/U} \cong \calD_{G/U}^{\op}$ (due to triviality of the canonical line bundle on $G/U$) induces an isomorphism of algebras $\wcalD^{\op} \cong \widetilde{\sigma}^* \wcalD$. Moreover, taking global sections this isomorphism gives the natural isomorphism $\Ug \otimes_{\fZ_{\HC}} \rmS(\fh) \cong (\Ug \otimes_{\fZ_{\HC}} \rmS(\fh))^{\op}$. In particular, we obtain an isomorphism of algebras on the formal neighborhood of $\calB^{(1)}$ in $\wfg^{(1)}$: \begin{equation} \label{eq:isom-D-op} \wcalD^{\widehat{\lambda}} \ \cong \ \sigma^* (\wcalD^{\widehat{-\lambda-2\rho}})^{\op}.
\end{equation} Now by \cite[Lemma 3.0.6(b)]{BMR2}, there exists an isomorphism of $\wcalD^{\widehat{\lambda}}$-mo\-dules \begin{equation} \label{eq:isom-M} \calM^{\lambda} \ \cong \ \sigma^* (\calM^{-\lambda-2\rho})^{\star}, \end{equation} where the $\wcalD^{\widehat{\lambda}}$-module structure on the right hand side is induced by isomorphism \eqref{eq:isom-D-op}. It can be easily checked that for any choice of such an isomorphism $\phi$, the induced isomorphism \begin{multline*} \wcalD^{\widehat{\lambda}} \ \cong \ \sheafEnd(\calM^{\lambda}) \ \overset{\phi}{\cong} \ \sigma^* \sheafEnd((\calM^{-\lambda-2\rho})^{\star}) \\ \overset{(-)^{\star}}{\cong} \ \sigma^* \sheafEnd(\calM^{-\lambda-2\rho})^{\op} \ \cong \ \sigma^* (\wcalD^{\widehat{-\lambda-2\rho}})^{\op} \end{multline*} coincides with \eqref{eq:isom-D-op}.

One can choose the decompositions \eqref{eq:decomposition-M} for $\lambda$ and for $-\lambda-2\rho$ to be compatible with isomorphism \eqref{eq:isom-M}. Then by definition and Proposition \ref{prop:duality-L-P}, isomorphism \eqref{eq:isom-M} becomes an isomorphism of $\Gm$-equivariant vector bundles \begin{equation} \label{eq:isom-M-gr} \calM^{\lambda}_{\gr} \ \cong \ \sigma^* (\calM^{-\lambda-2\rho}_{\gr})^{\star} \langle 3N \rangle. \end{equation} The result follows, by construction of the grading on $(\Ug)_0^{\widehat{\lambda}}$ and $(\Ug)_0^{\widehat{-\lambda-2\rho}}$ (see \cite[\S 6.3]{Ri}).\end{proof}

\subsection{Grading and duality} \label{ss:grading-duality}

Using Proposition \ref{prop:antiautomorphism-grading}, one can define a duality functor \[ (-)^{\vee} : \Mod^{\fig,\gr}_{\lambda}((\Ug)_0) \xrightarrow{\sim} \Mod^{\fig,\gr}_{-\lambda-2\rho}((\Ug)_0)^{\op}. \] Let us give a geometric description of this functor. (The proof is an adaptation of that of \cite[Proposition 3.0.9]{BMR2}.)

\begin{prop} \label{prop:graded-duality}

Assume $p>h$ is such that Lusztig's conjecture is true.

Let $\lambda \in \bbX$ be regular. Then the following diagram commutes: \[ \xymatrix@C=60pt{ \DGCoh^{\gr}((\wfg
\, \rcap_{\fg^* \times \calB} \, \calB)^{(1)}) \ar[d]_-{\widetilde{\gamma}_{\lambda}^{\calB}}^-{\wr} \ar[r]^-{\bfD^{\gr}_{\calT}} & \DGCoh^{\gr}((\wfg
\, \rcap_{\fg^* \times \calB} \, \calB)^{(1)}) \ar[d]^-{\widetilde{\gamma}_{-\lambda-2\rho}^{\calB}}_-{\wr} \\ \calD^b \Mod^{\fig,\gr}_{\lambda}((\calU \fg)_0) \ar[r]^-{(-)^{\vee} \langle 2 N \rangle} & \calD^b \Mod^{\fig,\gr}_{-\lambda-2\rho}((\calU \fg)_0). } \]

\end{prop}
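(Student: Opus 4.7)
The plan is to adapt the proof of Proposition \ref{prop:BMR-duality-Fr}(i), itself modeled on \cite[Proposition 3.0.9]{BMR2}, to the $\Gm$-equivariant setting. The key ingredient is the graded isomorphism \eqref{eq:isom-M-gr}, $\calM^{\lambda}_{\gr} \cong \sigma^*(\calM^{-\lambda-2\rho}_{\gr})^{\star}\langle 3N\rangle$, of splitting bundles established inside the proof of Proposition \ref{prop:antiautomorphism-grading}; this substitutes for the non-equivariant isomorphism \eqref{eq:isom-M} used in \cite{BMR2}. Since $\widetilde{\gamma}^{\calB}_{\lambda}$ is built entirely out of $\calM^{\lambda}_{\gr}$ (see \cite[\S 6.3]{Ri} and \S \ref{ss:grading-anti-isom}), upgrading each step of the \cite{BMR2} argument to the $\Gm$-equivariant world produces a natural isomorphism between the two compositions in the diagram, differing from the identity only by some overall internal shift $\langle k\rangle$; naturality follows from the naturality of the construction of $\widetilde{\gamma}^{\calB}_{\lambda}$ together with the fixed choice of \eqref{eq:isom-M-gr}.

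The overall shift can in principle be computed by tracking its various contributions: the factor $\langle 3N\rangle$ from \eqref{eq:isom-M-gr}, the factor $\langle 2N\rangle$ from the formula \eqref{eq:D_T} for $\calT$-duality (together with the tensor factor $\calL \cong \calO_{\calB^{(1)}}(-2\rho)$ and a homological shift $[N]$), and the shift $\langle -3N\rangle$ built into the definition $\bfD^{\gr}_{\calT} = \sigma^*\bbD_{\calT}\langle -3N\rangle$; these, combined with the $\rho$-twist of \eqref{eq:isom-lkd-duality}, should organize into a net shift $\langle 2N\rangle$ on the module side.

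A cleaner route to $k = 2N$ is to test on a single object $\fP^{y,\gr}_{\tau_0 w}$. By Proposition \ref{prop:duality-L-P}, the left composition sends it to the Koszul-graded projective $P(-w_0(\tau_0 w y^{-1}\bullet\lambda))$ with head in internal degree $0$. On the other hand, $\widetilde{\gamma}^{\calB}_{\lambda}(\fP^{y,\gr}_{\tau_0 w}) = P(\tau_0 w y^{-1}\bullet\lambda)$, as a graded module, has head in internal degree $0$ and, by Koszulity of $(\Ug)_0^{\widehat{\lambda}}$ (Theorem \ref{thm:koszul-grading}) together with the vanishing of $\Ext^i(L,L')$ in the regular block for $i > 2N$ (from the bound $\dim \wcalN^{(1)} = 2N$ transported via $\epsilon^{\calB}_{\lambda}$) and the non-vanishing of $\Ext^{2N}$ (from Serre duality on $\calB^{(1)}$, e.g.\ $\Ext^{2N}(\calO_{\calB^{(1)}},\calO_{\calB^{(1)}}) \cong H^N(\calB^{(1)},\omega_{\calB^{(1)}}^{-1}) \cong \bk$), has its socle concentrated in top internal degree $2N$. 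Consequently its $\bk$-linear dual has head in internal degree $-2N$, and the shift $\langle 2N\rangle$ restores head in degree $0$; uniqueness of graded lifts of indecomposables (\cite[\S 5.6]{Ri}) then forces equality.

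The main obstacle is the careful bookkeeping of internal, cohomological, and line-bundle shifts through the $\Gm$-equivariant \cite{BMR2} argument, in particular the interplay of $\calL \cong \calO_{\calB^{(1)}}(-2\rho)$ with the $\rho$-twist already appearing in \eqref{eq:isom-lkd-duality} and the cohomological shift $[N]$ coming from \eqref{eq:D_T} and \eqref{eq:coind-T}. Once these are organized correctly, the verification on a single indecomposable generator above pins down the unique value $\langle 2N\rangle$.
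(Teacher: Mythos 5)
Your overall strategy is the right one and is essentially the one the paper follows: the proof of Proposition~\ref{prop:graded-duality} is an adaptation of \cite[Proposition~3.0.9]{BMR2} to the $\Gm$-equivariant setting, with the graded isomorphism~\eqref{eq:isom-M-gr} of splitting bundles as the key new input. So your first paragraph correctly identifies the plan.

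Your second paragraph, however, tracks the wrong shifts. The square in the statement involves $\bfD^{\gr}_{\calT}$, the graded localization equivalences $\widetilde{\gamma}^{\calB}_{\lambda}$, and the algebraic duality $(-)^{\vee}$ --- the Koszul duality $\kappa_{\calB}$ does not appear, so neither~\eqref{eq:isom-lkd-duality} nor the $\calL\cong\calO_{\calB^{(1)}}(-2\rho)$ twist from Proposition~\ref{prop:compatibility-lkd-duality} enters. Likewise, \eqref{eq:D_T} is not invoked directly. The shifts in the actual chain of isomorphisms come from: (a) an auxiliary lemma computing $\bbD_{\fg^*}(M)\cong M^*[-d]\langle -2d\rangle$ for finite-dimensional $\rmS(\fg)$-modules (Lemma~\ref{lem:duality-g^*}), giving $[d]\langle 2d\rangle$ after inverting; (b) the commutation of Grothendieck--Serre duality with the proper pushforward $R\pi_*:\calD^b\Coh(\wfg^{(1)})\to\calD^b\Coh(\fg^{*(1)})$, contributing $\langle -2N\rangle$; (c) the factor $\langle -3N\rangle$ from dualizing~\eqref{eq:isom-M-gr}; and (d) the compatibility $\bfD^{\gr}_{\wfg}\circ Rp_*\cong Rp_*\circ\bfD^{\gr}_{\calT}$, which is where the ``interface'' with $\calT$-duality actually enters --- it is a direct consequence of Lemma~\ref{lem:duality-Fbot} with $m=d$, $\calK$ trivial, once the definitions $\bfD^{\gr}_{\calT}=\sigma^*\bbD_{\calT}\langle-3N\rangle$ and $\bfD^{\gr}_{\wfg}=\sigma^*\bbD_{\wfg^{(1)}}[d]\langle 2d-3N\rangle$ are unwound. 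Putting these together gives the net $\langle 2N\rangle$ directly; there is no role for a separate $[N]$ or for the $\rho$-twist.

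Your third paragraph is a genuinely different and correct way to nail the shift, and it does not appear in the paper. Once the equivariant BMR2-style computation shows the two compositions agree up to a uniform internal shift $\langle k\rangle$ (which is what makes your test on a single indecomposable meaningful), the degree of the socle of a Koszul-graded indecomposable projective forces $k=2N$, exactly as you say: the top internal degree of $(\Ug)_0^{\widehat{\lambda}}$ is $2N$ by the $\Ext^i$-vanishing for $i>2N$, the degree-$2N$ piece of each indecomposable projective lies in its (simple) socle, and it is nonzero by Serre duality for the Calabi--Yau variety $\wcalN^{(1)}$. This is a nice consistency check; in the paper the information that the socle sits in degree $2N$ appears only \emph{a posteriori}, as a consequence of Proposition~\ref{prop:duality-regular-rep}. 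The one caution is that the third paragraph does not stand alone: it still presupposes the BMR2-style argument to produce a uniform shift, so you cannot skip carrying that out. As written, your proposal sketches the correct outline but leaves both the commutation-up-to-shift and the correct accounting of shift sources incomplete.
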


\begin{proof} First we begin with an easy lemma, whose proof is similar to that of \cite[Lemma 3.0.1]{BMR2}.

\begin{lem} \label{lem:duality-g^*}

Let $M$ be a finite complex of finite dimensional $\rmS(\fg)$-mo\-dules (or equivalently quasi-coherent $\calO_{\fg^*}$-modules). Then there exists a functorial isomorphism, in the derived category of $\rmS(\fg)$-modules \[ \bbD_{\fg^*}(M) \ \cong \ M^* [-d] \langle -2d \rangle. \]

\end{lem}

Now, let $\pi : \wfg^{(1)} \to \fg^*{}^{(1)}$ be the natural morphism. Then, as Grothen\-dieck-Serre duality commutes with proper direct images, we have an isomorphism \begin{equation} \label{eq:duality-pi_*} R\pi_* \circ \bbD_{\wfg} \ \cong \ \bbD_{\fg^*} \circ R\pi_* \langle 2N \rangle.
\end{equation}

Let $\calF \in \DGCoh^{\gr}((\wfg \, \rcap_{\fg^* \times \calB} \, \calB)^{(1)})$. By definition and Lemma \ref{lem:duality-g^*} we have an isomorphism \begin{align*} \widetilde{\gamma}^{\calB}_{\lambda}(\calF)^{\vee} \ & = \ R\Gamma(\calM^{\lambda}_{\gr} \otimes_{\calO_{\wfg^{(1)}}} Rp_* \calF)^* \\ & \cong \ \bbD_{\fg^*} \circ R\pi_*(\calM^{\lambda}_{\gr} \otimes_{\calO_{\wfg^{(1)}}} Rp_* \calF) [d] \langle 2d \rangle. \end{align*} Using \eqref{eq:duality-pi_*} we deduce \[ \widetilde{\gamma}^{\calB}_{\lambda}(\calF)^{\vee} \ \cong \ R\pi_* \circ \bbD_{\wfg}(\calM^{\lambda}_{\gr} \otimes_{\calO_{\wfg^{(1)}}} Rp_* \calF) [d] \langle 2d-2N \rangle. \] Now there is a natural isomorphism \[ \bbD_{\wfg}(\calM^{\lambda}_{\gr} \otimes_{\calO_{\wfg^{(1)}}} \calF) \ \cong \ (\calM^{\lambda}_{\gr})^{\star} \otimes_{\calO_{\wfg^{(1)}}} \bbD_{\wfg}(Rp_* \calF). \] Hence, using isomorphism \eqref{eq:isom-M-gr} we obtain \[ \widetilde{\gamma}^{\calB}_{\lambda}(\calF)^{\vee} \ \cong \ R\Gamma \bigl( \calM_{\gr}^{-\lambda-2\rho} \otimes_{\calO_{\wfg^{(1)}}} \sigma^* \bbD_{\wfg}(Rp_* \calF) \bigr) [d] \langle 2d - 5N \rangle. \] Using finally the isomorphism $\bfD_{\wfg}^{\gr} \circ Rp_* \cong Rp_* \circ \bfD_{\calT}^{\gr}$, we obtain \begin{multline*} \widetilde{\gamma}^{\calB}_{\lambda}(\calF)^{\vee} \ \cong \ R\Gamma \bigl( \calM_{\gr}^{-\lambda-2\rho} \otimes_{\calO_{\wfg^{(1)}}} Rp_* (\bfD_{\calT}^{\gr} \calF) \bigr) \langle - 2 N \rangle \\ \cong \ \widetilde{\gamma}^{\calB}_{-\lambda-2\rho} \circ \bfD_{\calT}^{\gr}(\calF) \langle -2N \rangle . \end{multline*} One can check that all these isomorphisms are isomorphisms in the derived category of $(\Ug)_0$-modules, which concludes the proof. \end{proof}

\subsection{Duality and the regular representation}

Now we can prove point $(2)$ of the regular case of the main Theorem.

\begin{prop} \label{prop:duality-regular-rep}

Assume $p>h$ is such that Lusztig's conjecture is true.

Let $\lambda \in \bbX$ be regular. There exists an isomorphism of graded $(\Ug)_0^{\widehat{\lambda}}$-modules \[ (\Ug)_0^{\widehat{\lambda}} \ \cong \ ((\Ug)_0^{\widehat{-\lambda-2\rho}})^{\vee} \langle 2 N \rangle. \]

\end{prop}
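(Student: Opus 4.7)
The plan is to reduce the assertion to a geometric identity and then invoke the preceding propositions. Set
\[ \calF_\mu \ := \ \bigoplus_{w \in W^0} \bigl( \fP^{y_\mu,\gr}_w \bigr)^{\oplus n_w^{\mu}} \qquad (\mu = \lambda \text{ or } -\lambda-2\rho), \]
where $n_w^{\mu} := \dim L(w y_\mu^{-1}\bullet\mu)$ and $y_\mu \in W_{\aff}$ is the unique element with $y_\mu^{-1}\bullet\mu \in C_0$ (so $y_\lambda = y$ and $y_{-\lambda-2\rho} = z$). By the construction of the Koszul grading recalled in \S\ref{ss:grading-anti-isom}, $\calF_\mu$ is the image of the graded left $(\Ug)_0^{\widehat{\mu}}$-module $(\Ug)_0^{\widehat{\mu}}$ under $\widetilde{\gamma}^{\calB}_\mu$. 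Proposition \ref{prop:graded-duality} then identifies $\widetilde{\gamma}^{\calB}_{-\lambda-2\rho}\bigl(\bfD_{\calT}^{\gr}\calF_\lambda\bigr)$ with $\bigl((\Ug)_0^{\widehat{\lambda}}\bigr)^{\vee}\langle 2N\rangle$, so a graded isomorphism $\bfD_{\calT}^{\gr}(\calF_\lambda) \cong \calF_{-\lambda-2\rho}$ will give $(\Ug)_0^{\widehat{-\lambda-2\rho}} \cong \bigl((\Ug)_0^{\widehat{\lambda}}\bigr)^{\vee}\langle 2N\rangle$; dualizing and shifting by $\langle 2N\rangle$ then yields the proposition.

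For the geometric isomorphism I apply Proposition \ref{prop:duality-L-P}: for each $v \in W^0$ one has $\bfD_{\calT}^{\gr}(\fP^{y,\gr}_{\tau_0 v}) \cong \fP^{z,\gr}_{\tau_0 \iota_\lambda(v)}$, \emph{with no intermediate grading shift}. Since $\tau_0 W^0 = W^0$ (as already used in \cite{Ri}), every $w \in W^0$ can be written uniquely as $\tau_0 v$ with $v \in W^0$; collecting summands and setting $w' := \tau_0 \iota_\lambda(v)$ gives
\[ \bfD_{\calT}^{\gr}(\calF_\lambda) \ \cong \ \bigoplus_{w' \in W^0} \bigl( \fP^{z,\gr}_{w'} \bigr)^{\oplus n_{\tau_0 u}^{\lambda}}, \qquad \text{where } u := \iota_\lambda^{-1}(\tau_0^{-1} w'). \]
Comparing with $\calF_{-\lambda-2\rho}$, the problem reduces to the combinatorial identity $n_{\tau_0 u}^{\lambda} = n_{\tau_0 \iota_\lambda(u)}^{-\lambda-2\rho}$ for every $u \in W^0$.

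This last identity is the only point requiring real verification, and constitutes the (minor) obstacle. Setting $\nu := u y^{-1}\bullet\lambda$, the defining property of $\iota_\lambda$ reads $\iota_\lambda(u) z^{-1}\bullet(-\lambda-2\rho) = -w_0 \nu$, so the identity becomes $\dim L(\tau_0\bullet\nu) = \dim L(\tau_0\bullet(-w_0 \nu))$. A direct computation with the dot-action formula using $\tau_0 = w_0 t_{-\rho}$ gives $\tau_0\bullet\sigma = w_0 \sigma + (p-2)\rho$ for any $\sigma$, from which one reads off $\tau_0\bullet(-w_0 \nu) = -w_0(\tau_0\bullet\nu)$. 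The desired equality of dimensions then follows from the standard isomorphism $L(\sigma)^* \cong L(-w_0\sigma)$. With this combinatorial identity established, the reductions above combine to produce the stated graded isomorphism; the main content has been absorbed into Proposition \ref{prop:duality-L-P} and Proposition \ref{prop:graded-duality}.
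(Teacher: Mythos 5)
Your proof is correct and follows the same outline as the paper's: combine Proposition \ref{prop:graded-duality} with a $\bfD_{\calT}^{\gr}$-isomorphism of the geometric object corresponding to the regular module. The difference is how that geometric isomorphism is produced. The paper derives it from the $\Gm$-equivariant bundle isomorphism $\calM^{\lambda}_{\gr} \cong \sigma^*(\calM^{-\lambda-2\rho}_{\gr})^{\star}\langle 3N\rangle$ established in the proof of Proposition \ref{prop:antiautomorphism-grading}, which handles the indecomposable summands and their multiplicities in one stroke. You instead decompose the regular module into indecomposable projectives with explicit multiplicities $n_w^\lambda$, apply Proposition \ref{prop:duality-L-P} summand-by-summand, and verify by hand the multiplicity identity $n_{\tau_0 u}^{\lambda} = n_{\tau_0\iota_\lambda(u)}^{-\lambda-2\rho}$, reducing it via the (correct) computation $\tau_0\bullet(-w_0\nu) = -w_0(\tau_0\bullet\nu)$ to $\dim L(\sigma) = \dim L(-w_0\sigma)$. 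That combinatorial check is exactly what the bundle isomorphism makes automatic; the two arguments are equivalent, yours being more self-contained at this one step, the paper's more concise because it can invoke \eqref{eq:isom-M-gr} directly.
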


\begin{proof} This follows from Proposition \ref{prop:graded-duality}, using the fact that \[ (\Ug)_0^{\widehat{\lambda}} \ \cong \ \widetilde{\gamma}^{\calB}_{\lambda} \bigl( (\calM^{\lambda}_{\gr})^{\star} \otimes_{\bk} \Lambda(\fg^{(1)}) \bigr), \] and the natural isomorphism \[ (\calM^{\lambda}_{\gr})^{\star} \otimes_{\bk} \Lambda(\fg^{(1)}) \ \cong \ \bfD_{\calT}^{\gr}((\calM^{-\lambda-2\rho}_{\gr})^{\star} \otimes_{\bk} \Lambda(\fg^{(1)})) \] which follows from Proposition \ref{prop:duality-L-P} (see the proof of Proposition \ref{prop:antiautomorphism-grading}). \end{proof}

\begin{remark}

\begin{enumerate}

\item It follows in particular from this proposition that the maximal degree in the Koszul grading of $(\Ug)_0^{\widehat{\lambda}}$ is $2N$. It follows also that the Poincaré polynomial $P_{\lambda}$ of this grading satisfies $P_{\lambda}(t^{-1})=t^{-2N} P_{\lambda}(t)$. (The former fact can also be derived directly from \cite{Ri}, using the property that the homological dimension of $(\Ug)^{\lambda}$ is $2N$.)

\item In fact the algebra $(\Ug)_0$ is a \emph{symmetric} algebra, see \cite{Sc, FP}, hence the same is true for $(\Ug)_0^{\widehat{\lambda}}$. In other words, there exists an isomorphism of $(\Ug)_0^{\widehat{\lambda}}$-bimodules $(\Ug)_0^{\widehat{\lambda}} \ \cong \ ((\Ug)_0^{\widehat{-\lambda-2\rho}})^{\vee}$. One can easily check that the isomorphism of Proposition \ref{prop:duality-regular-rep} is an isomorphism of graded $(\Ug)_0^{\widehat{\lambda}}$-bimodules.

\end{enumerate}

\end{remark}

\section{Koszul duality and ordinary duality: singular case} \label{sec:koszul-duality-singular}

In this section we prove analogues of the results of Section \ref{sec:koszul-duality-regular} for singular blocks. Most of the proofs are similar to those for the regular case, hence we omit them.

\subsection{Geometry}

Recall the constructions of \S \ref{ss:duality}. With the data $X=\calP^{(1)}$, $E=(\fg^* \times \calP)^{(1)}$, $F=\wcalN_{\calP}^{(1)}$, we obtain a duality functor \[ \bbD_{\calT,\calP} : \DGCoh^{\gr}((\wfg_{\calP} \, \rcap_{\fg^* \times \calP} \, \calP)^{(1)}) \xrightarrow{\sim} \DGCoh^{\gr}((\wfg_{\calP} \, \rcap_{\fg^* \times \calP} \, \calP)^{(1)}), \] which is a ``graded version'' of the functor $\bbD_{\calT,\calP}^0$ of \S \ref{ss:geometric-duality}. We define \[ \bfD_{\calT,\calP}^{\gr} \ := \ \sigma^* \bbD_{\calT,\calP} \langle -3 N \rangle. \] Then, one easily checks that we have an isomorphism \begin{equation} \label{eq:D-pi_calP} \bfD_{\calT}^{\gr} \circ L(\widehat{\pi}_{\calP,\Gm})^* \langle 2N -2N_{\calP} \rangle \ \cong \ L(\widehat{\pi}_{\calP,\Gm})^* \circ \bfD_{\calT,\calP}^{\gr}. \end{equation}

Let us fix a weight $\mu \in \bbX$ and a standard parabolic $P$ as in Theorem \ref{thm:thmBMR}$\rii$. Then $\mu-2\rho$ satisfies the same assumption, for the same parabolic subgroup (see Remark \ref{rk:singular-weights}). There exist a unique $y \in W_{\aff}$ such that the alcove $y \bullet C_0$ contains $\mu$ in its closure, and contains also a weight $\lambda$ orthogonal to all the roots of the Levi of $P$. Let also $z \in W_{\aff}$ be defined similarly, for $-\mu-2\rho$ instead of $\mu$. Then we have the following.

\begin{prop} \label{prop:duality-P-singular}

For $w \in W^0_{\mu}$, there is an isomorphism \[ \bfD_{\calT,\calP}^{\gr}(\fP^{y,\gr}_{\calP,w}) \ \cong \ \fP^{z,\gr}_{\calP,\tau_0 \iota_{\lambda}(\tau_0 w)}. \]

\end{prop}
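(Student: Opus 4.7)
The plan is to apply the unique characterization from Lemma \ref{lem:characterization-fP_calP} to identify $\bfD_{\calT,\calP}^{\gr}(\fP^{y,\gr}_{\calP,w})$, thereby reducing the singular statement to the regular case Proposition \ref{prop:duality-L-P} by pullback along $\widehat{\pi}_{\calP}$. The computation has three moving parts: the commutation formula \eqref{eq:D-pi_calP}, the defining property \eqref{eq:fP_calP}, and the fact that the graded duality $\bfD^{\gr}_{\calT}$ sends an internal shift $\langle m \rangle$ to $\langle -m \rangle$ (which is immediate from its definition $\sigma^*\bbD_{\calT}\langle -3N\rangle$).

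Concretely, I would first apply $L(\widehat{\pi}_{\calP,\Gm})^*$ to $\bfD^{\gr}_{\calT,\calP}(\fP^{y,\gr}_{\calP,w})$ and use \eqref{eq:D-pi_calP} followed by \eqref{eq:fP_calP} to compute
\[
L(\widehat{\pi}_{\calP,\Gm})^* \bfD^{\gr}_{\calT,\calP}(\fP^{y,\gr}_{\calP,w}) \ \cong \ \bfD^{\gr}_{\calT}\bigl( \fP^{y,\gr}_w \langle N-N_{\calP} \rangle \bigr) \langle 2N-2N_{\calP} \rangle \ \cong \ \bfD^{\gr}_{\calT}(\fP^{y,\gr}_w) \langle N - N_{\calP} \rangle,
\]
where in the last step the internal shifts combine as $-(N-N_{\calP}) + (2N-2N_{\calP}) = N-N_{\calP}$. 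Next, using the identity $\tau_0^2 = 1$ in $W_{\aff}'$ (easy from $\tau_0 = t_{\rho} w_0$), I rewrite $w = \tau_0 (\tau_0 w)$ and apply Proposition \ref{prop:duality-L-P} with index $\tau_0 w \in W^0$ to get $\bfD^{\gr}_{\calT}(\fP^{y,\gr}_w) \cong \fP^{z,\gr}_{\tau_0 \iota_{\lambda}(\tau_0 w)}$. Combining, we obtain
\[
L(\widehat{\pi}_{\calP,\Gm})^* \bfD^{\gr}_{\calT,\calP}(\fP^{y,\gr}_{\calP,w}) \ \cong \ \fP^{z,\gr}_{\tau_0 \iota_{\lambda}(\tau_0 w)} \langle N - N_{\calP} \rangle.
\]

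At this point, in order to invoke the uniqueness half of Lemma \ref{lem:characterization-fP_calP} for the weight $-\mu-2\rho$ (which by Remark \ref{rk:singular-weights} corresponds to the same parabolic $P$ and has associated alcove label $z$), I need to know that $\tau_0 \iota_{\lambda}(\tau_0 w)$ lies in $W^0_{-\mu-2\rho}$. Once this is established, the lemma immediately gives $\bfD^{\gr}_{\calT,\calP}(\fP^{y,\gr}_{\calP,w}) \cong \fP^{z,\gr}_{\calP, \tau_0 \iota_{\lambda}(\tau_0 w)}$, which is the claim.

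I expect the main obstacle to be this combinatorial verification that $\tau_0 \iota_{\lambda}(\tau_0 w) \in W^0_{-\mu-2\rho}$ whenever $w \in W^0_{\mu}$. This should follow from unwinding the definitions: $w \bullet \mu_0$ being in the upper closure of $w \bullet C_0$ translates, under the symmetry $\nu \mapsto -w_0 \nu$ that governs duality (hence $\iota_{\lambda}$) together with conjugation by $\tau_0$, into the analogous ``upper closure'' condition for $-\mu-2\rho$ in the alcove labelled by $z$; this is essentially the singular counterpart of the compatibility already used in the definition of $\iota_{\lambda}$ in the regular case. All other ingredients are formal manipulations of the grading shifts, which I verified match up precisely because the discrepancy $2N-2N_{\calP}$ in \eqref{eq:D-pi_calP} exactly compensates the shift picked up by $\bfD^{\gr}_{\calT}$ acting on $\langle N - N_{\calP} \rangle$.
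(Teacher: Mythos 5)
Your proof is essentially the paper's proof: the same three ingredients (the defining property \eqref{eq:fP_calP}, the commutation isomorphism \eqref{eq:D-pi_calP}, and the regular-case Proposition \ref{prop:duality-L-P} applied at $\tau_0 w$ via $\tau_0^2=1$) are combined in the same order to compute $L(\widehat{\pi}_{\calP,\Gm})^* \bfD_{\calT,\calP}^{\gr}(\fP^{y,\gr}_{\calP,w})$, and the conclusion is drawn from the uniqueness statement of Lemma \ref{lem:characterization-fP_calP}. The combinatorial side-condition you flag, that $\tau_0\iota_\lambda(\tau_0 w)\in W^0_{-\mu-2\rho}$, is also left implicit by the paper (it is built into the well-formedness of the statement), so this is not a gap relative to the reference.
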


\begin{proof} By Proposition \ref{prop:duality-L-P}, there is an isomorphism \[ \bfD_{\calT}^{\gr}(\fP^{y,\gr}_{w}) \ \cong \ \fP^{z,\gr}_{\tau_0 \iota_{\lambda}(\tau_0 w)}. \] Using equation \eqref{eq:fP_calP}, we deduce  \[ \bfD_{\calT}^{\gr} \circ L(\widehat{\pi}_{\calP,\Gm})^* \, \fP^{y,\gr}_{\calP,w} \ \cong \ L(\widehat{\pi}_{\calP,\Gm})^* \, \fP^{z,\gr}_{\calP,\tau_0 \iota_{\lambda}(\tau_0 w)} \langle N_{\calP} - N \rangle. \] Then, using \eqref{eq:D-pi_calP}, we obtain \[ L(\widehat{\pi}_{\calP,\Gm})^* \circ \bfD_{\calT,\calP}^{\gr} (\fP^{y,\gr}_{\calP,w}) \ \cong \ \fP^{z,\gr}_{\calP,\tau_0 \iota_{\lambda}(\tau_0 w)} \langle N - N_{\calP} \rangle. \] The result follows, by Lemma \ref{lem:characterization-fP_calP}.\end{proof}

\begin{remark}

\begin{enumerate}
\item One can prove a similar statement for the objects ``$\calL^{y,\gr}_{\calP,w}$'' of \cite[Theorem 10.2.4]{Ri}. As we do not need this statement (not even for the proof of Proposition \ref{prop:duality-P-singular}), we omit it.
\item Again, this proposition allows to prove the Frobenius property of singular blocks of $(\Ug)_0$ without refering to the general result of \cite{Be}. It also gives a description of duals of indecomposable projectives.
\end{enumerate}

\end{remark}

\subsection{Grading and the natural anti-isomorphism} \label{ss:grading-anti-isom-singular}

As in \S \ref{ss:grading-anti-isom} we choose a $\Gm$-equivariant structure on the vector bundle $\calM^{\mu}_{\calP}$ (as a sheaf on the formal neighborhood of $\calP^{(1)}$ in $\wfg_{\calP}^{(1)}$) which is compatible with the $\Gm$-equivariant structures on the $\fP^{y,\gr}_{\calP,w}$'s. Then we have an equivalence of categories \[ \widetilde{\gamma}^{\calP}_{\mu} : \ \DGCoh^{\gr}((\wfg \rcap_{\fg^* \times \calB} \calB)^{(1)}) \ \xrightarrow{\sim} \ \calD^b \Mod^{\fig,\gr}_{\mu}((\Ug)_0), \] where the grading on the algebra $(\Ug)_0^{\widehat{\mu}}$ is the Koszul grading provided by Theorem \ref{thm:koszul-grading-singular} (see \cite[\S 10.2]{Ri}).

One can make the same constructions for the weight $-\mu-2\rho$. Then, one can prove the following singular analogue of Proposition \ref{prop:antiautomorphism-grading}.

\begin{prop} \label{prop:antiautomorphism-grading-singular}

Assume $p>h$ is such that Lusztig's conjecture is true, and conditions \eqref{eq:highervanishing} and \eqref{eq:phisurjective} are satisfied.

The isomorphism $\Phi_{\mu} : (\Ug)_0^{\widehat{\mu}} \to ((\Ug)_0^{\widehat{-\mu-2\rho}})^{\op}$ is an isomorphism of graded algebras, where both algebras are endowed with the Koszul grading given by Theorem {\rm \ref{thm:koszul-grading}}.

\end{prop}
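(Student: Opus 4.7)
The plan is to mimic the proof of Proposition~\ref{prop:antiautomorphism-grading}, replacing $\calB$ by $\calP$ and invoking Proposition~\ref{prop:duality-P-singular} in place of Proposition~\ref{prop:duality-L-P}. First, I would introduce the parabolic analogue $\widetilde{\sigma}_{\calP}$ of the automorphism $\widetilde{\sigma}$, acting on the formal neighborhood of $\calP^{(1)}\times\{\mu\}$ in $\wfg_{\calP}^{(1)}\times_{\fh^*{}^{(1)}/W_{\calP}}\fh^*/(W_{\calP},\bullet)$ by $(gP,X,\nu)\mapsto(gP,-X,-\nu-2\rho)$. The argument of \cite[Lemma~3.0.6(a)]{BMR2}, applied to the parabolic version $\wcalD_{\calP}$ of the sheaf of twisted differential operators (defined analogously by descent from $G/U_P$ along the Levi torus), yields an isomorphism of sheaves of algebras $\wcalD_{\calP}^{\op}\cong\widetilde{\sigma}_{\calP}^*\wcalD_{\calP}$, which after completion along $\calP^{(1)}\times\{\mu\}$ specializes to
\[
\wcalD_{\calP}^{\widehat{\mu}}\ \cong\ \sigma^*\bigl(\wcalD_{\calP}^{\widehat{-\mu-2\rho}}\bigr)^{\op}.
\]

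Next, the parabolic analogue of \cite[Lemma~3.0.6(b)]{BMR2} should give an isomorphism of $\wcalD_{\calP}^{\widehat{\mu}}$-modules
\[
\calM^{\mu}_{\calP}\ \cong\ \sigma^*\bigl(\calM^{-\mu-2\rho}_{\calP}\bigr)^{\star},
\]
where the right-hand side acquires its $\wcalD_{\calP}^{\widehat{\mu}}$-structure through the previous display. As in the regular case, any such isomorphism induces the algebra isomorphism above, and hence restricts to the anti-isomorphism $\Phi_{\mu}$ at the level of global sections. Now choose the indecomposable decompositions of $\calM^{\mu}_{\calP}$ and $\calM^{-\mu-2\rho}_{\calP}$ compatibly with this isomorphism; combining with Proposition~\ref{prop:duality-P-singular} and the characterisation of the $\fP^{y,\gr}_{\calP,w}$'s by~\eqref{eq:fP_calP} and Lemma~\ref{lem:characterization-fP_calP}, the displayed isomorphism promotes uniquely to a $\Gm$-equivariant isomorphism
\[
\calM^{\mu}_{\calP,\gr}\ \cong\ \sigma^*\bigl(\calM^{-\mu-2\rho}_{\calP,\gr}\bigr)^{\star}\langle 3N\rangle,
\]
with the same internal shift $3N$ as in the regular case (this is dictated by the shift $\langle -3N\rangle$ in the definition of $\bfD_{\calT,\calP}^{\gr}$, which matches that of $\bfD_{\calT}^{\gr}$, combined with the compatibility~\eqref{eq:D-pi_calP} via pullback along $L(\widehat{\pi}_{\calP,\Gm})^*$).

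The conclusion then follows by the construction of the Koszul gradings on $(\Ug)_0^{\widehat{\mu}}$ and $(\Ug)_0^{\widehat{-\mu-2\rho}}$ recalled in~\S\ref{ss:grading-anti-isom-singular} (and detailed in \cite[\S 10.2]{Ri}): the grading is read off from the chosen $\Gm$-equivariant structure on $\calM^{\mu}_{\calP}$, and the above identification intertwines $\Phi_{\mu}$ with this structure. The main obstacle I anticipate is the verification that the shift appearing in the $\Gm$-equivariant isomorphism is exactly $\langle 3N\rangle$ and not some other integer; this pinning-down uses the fact that $\Phi_{\mu}$ is an honest (non-graded) isomorphism and relies crucially on Proposition~\ref{prop:duality-P-singular} to identify the indecomposable summands after applying $\bfD_{\calT,\calP}^{\gr}$ with those of $\calM^{-\mu-2\rho}_{\calP,\gr}$, with no residual grading shift. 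The rest is bookkeeping, essentially transcribing the argument of Proposition~\ref{prop:antiautomorphism-grading} to the parabolic setting.
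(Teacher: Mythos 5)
Your proof is correct and follows exactly the route the paper intends: the paper gives no explicit argument for Proposition~\ref{prop:antiautomorphism-grading-singular}, saying only that "one can prove the following singular analogue of Proposition~\ref{prop:antiautomorphism-grading}," and you have transcribed that regular-case argument faithfully to the parabolic setting, substituting the parabolic analogues of BMR2's Lemma~3.0.6(a),(b), Proposition~\ref{prop:duality-P-singular} in place of Proposition~\ref{prop:duality-L-P}, and Lemma~\ref{lem:characterization-fP_calP} to pin down the residual internal shift. Your identification of the shift as $\langle 3N\rangle$ (not $\langle 3N_{\calP}\rangle$) is indeed correct and consistent with the paper's definition $\bfD^{\gr}_{\calT,\calP}=\sigma^*\bbD_{\calT,\calP}\langle -3N\rangle$ and with the shift $\langle 2N_{\calP}\rangle$ appearing in Proposition~\ref{prop:graded-duality-singular}; the intuitive reason is that the ambient bundle $\calE=(\fg^*\times\calP)^{(1)}$ still has rank $d$, so the Koszul-complex bookkeeping matches the regular case, while only the relative-dimension shifts (which produce $N_{\calP}$ instead of $N$) differ.
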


\subsection{Grading and duality} \label{ss:grading-duality-singular}

Using Proposition \ref{prop:antiautomorphism-grading-singular}, one can define a duality functor \[ (-)^{\vee} : \Mod^{\fig,\gr}_{\mu}((\Ug)_0) \xrightarrow{\sim} \Mod^{\fig,\gr}_{-\mu-2\rho}((\Ug)_0)^{\op}. \] As in Proposition \ref{prop:graded-duality}, one can give a geometric description of this functor.

\begin{prop} \label{prop:graded-duality-singular}

Assume $p>h$ is such that Lusztig's conjecture is true, and conditions \eqref{eq:highervanishing} and \eqref{eq:phisurjective} are satisfied.

The following diagram commutes: \[ \xymatrix@C=60pt{ \DGCoh^{\gr}((\wfg_{\calP}
\, \rcap_{\fg^* \times \calP} \, \calP)^{(1)}) \ar[d]_-{\widetilde{\gamma}_{\mu}^{\calP}}^-{\wr} \ar[r]^-{\bfD^{\gr}_{\calT,\calP}} & \DGCoh^{\gr}((\wfg_{\calP}
\, \rcap_{\fg^* \times \calP} \, \calP)^{(1)}) \ar[d]^-{\widetilde{\gamma}_{-\mu-2\rho}^{\calP}}_-{\wr} \\ \calD^b \Mod^{\fig,\gr}_{\mu}((\calU \fg)_0) \ar[r]^-{(-)^{\vee} \langle 2 N_{\calP} \rangle} & \calD^b \Mod^{\fig,\gr}_{-\mu-2\rho}((\calU \fg)_0). } \]

\end{prop}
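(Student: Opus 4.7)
The plan is to follow the proof of Proposition \ref{prop:graded-duality} \emph{mutatis mutandis}, replacing $\calB$ by $\calP$ throughout, and $N$ by $N_{\calP}$ in the shifts that arise from Grothendieck--Serre duality (but \emph{not} in the normalization shift $\langle -3N \rangle$ appearing in the definition of $\bfD^{\gr}_{\calT,\calP}$, which is chosen uniformly).

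The first step is to establish the parabolic analogue of the graded isomorphism \eqref{eq:isom-M-gr}, namely an isomorphism of $\Gm$-equivariant vector bundles on the formal neighborhood of $\calP^{(1)}$ in $\wfg_{\calP}^{(1)}$,
\[
\calM^{\mu}_{\calP,\gr} \ \cong \ \sigma^* (\calM^{-\mu-2\rho}_{\calP,\gr})^{\star} \langle 3N \rangle.
\]
This is contained in (the proof of) Proposition \ref{prop:antiautomorphism-grading-singular}: one starts from the ungraded analogue of \eqref{eq:isom-M} for the splitting bundles $\calM^{\mu}_{\calP}$ and $\calM^{-\mu-2\rho}_{\calP}$ (which is inherited from the $\calB$-case via the construction of $\calM^{\mu}_{\calP}$ by descent from $\calM^{\lambda}$ in \cite[\S 1.3.5]{BMR2}), and then uses Proposition \ref{prop:duality-P-singular} to lift the isomorphism to the $\Gm$-equivariant setting and pin down the internal shift.

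The second step is to run the regular case computation. Writing $\pi^{\calP} : \wfg_{\calP}^{(1)} \to \fg^*{}^{(1)}$ for the natural morphism and using the projectivity of $\calP$, one has a Grothendieck--Serre duality isomorphism
\[
R\pi^{\calP}_* \circ \bbD_{\wfg_{\calP}} \ \cong \ \bbD_{\fg^*} \circ R\pi^{\calP}_* \langle 2N_{\calP} \rangle,
\]
the shift reflecting the relative canonical bundle $\omega_{\calP}$ of $\pi^{\calP}$. Starting from $\widetilde{\gamma}^{\calP}_{\mu}(\calF)^{\vee} = R\Gamma(\calM^{\mu}_{\calP,\gr} \otimes_{\calO_{\wfg_{\calP}^{(1)}}} R(p_{\calP})_* \calF)^*$, one applies successively Lemma \ref{lem:duality-g^*}, the Grothendieck--Serre isomorphism above, the parabolic version of \eqref{eq:isom-M-gr}, and finally the commutation $\bfD^{\gr}_{\wfg_{\calP}} \circ R(p_{\calP})_* \cong R(p_{\calP})_* \circ \bfD^{\gr}_{\calT,\calP}$ coming from Proposition \ref{prop:compatibility-lkd-duality} applied with $X = \calP^{(1)}$, $E = (\fg^* \times \calP)^{(1)}$, $F = \wcalN_{\calP}^{(1)}$. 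All numerical shifts combine exactly as in the proof of Proposition \ref{prop:graded-duality}, with the single difference that the Grothendieck--Serre contribution now gives $\langle 2N_{\calP} \rangle$ in place of $\langle 2N \rangle$, so that the composite isomorphism takes the form $\widetilde{\gamma}^{\calP}_{\mu}(\calF)^{\vee} \cong \widetilde{\gamma}^{\calP}_{-\mu-2\rho} \circ \bfD^{\gr}_{\calT,\calP}(\calF) \langle -2N_{\calP} \rangle$, which is exactly the statement to be proved.

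The main obstacle is the parabolic ungraded analogue of \cite[Lemma 3.0.6(b)]{BMR2}: in the $\calB$-case one uses the triviality of the canonical sheaf on $G/U$, a feature not available on $G/P$. However, since $\calM^{\mu}_{\calP}$ is constructed from $\calM^{\lambda}$ (for a regular $\lambda$ in the closure of whose alcove $\mu$ lies) via the translation principle, the duality isomorphism on the $\calB$-side should descend to the $\calP$-side, possibly up to a twist by a line bundle pulled back from $\calP^{(1)}$. Such a line bundle twist commutes with all the functors involved and can be absorbed into the $\Gm$-equivariant structure; Proposition \ref{prop:duality-P-singular} then uniquely determines the internal shift as $\langle 3N \rangle$, which is what makes the shift bookkeeping in the second step work out cleanly to $\langle -2N_{\calP} \rangle$.
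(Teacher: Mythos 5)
Your proposal is correct and is essentially the paper's (unstated) proof: Section \ref{sec:koszul-duality-singular} explicitly omits proofs because ``most of the proofs are similar to those for the regular case,'' and your plan of rerunning the proof of Proposition \ref{prop:graded-duality} with $\calB$ replaced by $\calP$ and with $\langle 2N \rangle$ replaced by $\langle 2N_{\calP} \rangle$ in the Grothendieck--Serre step (while keeping the uniform normalisation $\langle -3N \rangle$ in $\bfD^{\gr}_{\calT,\calP}$) is exactly what is intended, and the shifts do collapse to $\langle -2N_{\calP} \rangle$. Two small phrasing points: the relative dualizing sheaf of $\pi^{\calP}:\wfg_{\calP}^{(1)}\to\fg^{*(1)}$ is not literally $\omega_{\calP}$, though its $\Gm$-weight is indeed $-2N_{\calP}$, which is what produces the shift; and a possible line-bundle twist in the parabolic analogue of \cite[Lemma 3.0.6(b)]{BMR2} is not so much ``absorbed into the $\Gm$-equivariant structure'' as \emph{forced to be trivial} by Proposition \ref{prop:duality-P-singular}, since the $\fP^{y,\gr}_{\calP,w}$ are specific objects and the resulting isomorphism pins down both the line bundle and the internal shift $\langle 3N \rangle$ simultaneously.
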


\subsection{Duality and the regular representation} 

Finally, one can deduce the following.

\begin{prop} \label{prop:duality-regular-rep-singular}

Assume $p>h$ is such that Lusztig's conjecture is true, and conditions \eqref{eq:highervanishing} and \eqref{eq:phisurjective} are satisfied.

There exists an isomorphism of graded $(\Ug)_0^{\widehat{\mu}}$-modules \[ (\Ug)_0^{\widehat{\mu}} \ \cong \ ((\Ug)_0^{\widehat{-\mu-2\rho}})^{\vee} \langle 2 N_{\calP} \rangle. \]

\end{prop}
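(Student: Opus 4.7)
The plan is to imitate the proof of Proposition~\ref{prop:duality-regular-rep} in the parabolic setting, replacing $\calB$ by $\calP$ throughout, $\calM^{\lambda}$ by $\calM^{\mu}_{\calP}$, and $N$ by $N_{\calP}$. The two essential ingredients are a geometric description of the regular representation on each side and the geometric/algebraic duality interchange provided by Proposition~\ref{prop:graded-duality-singular}.

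First, in analogy with \S\ref{ss:grading-anti-isom}, I would observe that the collection $\{(\calM^{\mu}_{\calP,i})^{\star} \otimes_{\bk} \Lambda(\fg^{(1)})\}$, for a decomposition $\calM^{\mu}_{\calP} = \bigoplus_i \calM^{\mu}_{\calP,i}$ into indecomposables, produces (endowed with its natural Koszul differential) the list of objects $\fP^{y}_{\calP,w}$ for $w \in W^0_{\mu}$. Choosing the $\Gm$-equivariant structure on $\calM^{\mu}_{\calP}$ so that these agree with the preferred lifts $\fP^{y,\gr}_{\calP,w}$ (as in \S\ref{ss:grading-anti-isom-singular}), one obtains a canonical isomorphism
\[
(\Ug)_0^{\widehat{\mu}} \ \cong \ \widetilde{\gamma}^{\calP}_{\mu} \bigl( (\calM^{\mu}_{\calP,\gr})^{\star} \otimes_{\bk} \Lambda(\fg^{(1)}) \bigr)
\]
of graded left $(\Ug)_0^{\widehat{\mu}}$-modules, and likewise for $-\mu-2\rho$.

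Second, I would establish the parabolic version of isomorphism \eqref{eq:isom-M-gr}, namely an isomorphism of $\Gm$-equivariant vector bundles
\[
\calM^{\mu}_{\calP,\gr} \ \cong \ \sigma^* (\calM^{-\mu-2\rho}_{\calP,\gr})^{\star} \langle 3N \rangle
\]
(or the appropriate shift---this normalization should be dictated by $\bfD^{\gr}_{\calT,\calP}$). At the ungraded level, such an isomorphism follows from the parabolic analogue of \cite[Lemma~3.0.6]{BMR2}, using triviality of the canonical bundle and the identification $\wcalD_{\calP}^{\op} \cong \widetilde{\sigma}^* \wcalD_{\calP}$. Matching the decompositions on both sides with Proposition~\ref{prop:duality-P-singular} (together with the symmetry $\iota_{\lambda} \circ \iota_{-\lambda-2\rho} = \Id$) promotes this to an equivariant isomorphism, and hence to
\[
(\calM^{\mu}_{\calP,\gr})^{\star} \otimes_{\bk} \Lambda(\fg^{(1)}) \ \cong \ \bfD_{\calT,\calP}^{\gr} \bigl( (\calM^{-\mu-2\rho}_{\calP,\gr})^{\star} \otimes_{\bk} \Lambda(\fg^{(1)}) \bigr)
\]
in $\DGCoh^{\gr}((\wfg_{\calP} \rcap_{\fg^* \times \calP} \calP)^{(1)})$.

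Finally, applying $\widetilde{\gamma}^{\calP}_{\mu}$ to this isomorphism and invoking Proposition~\ref{prop:graded-duality-singular} (which converts $\bfD^{\gr}_{\calT,\calP}$ on the geometric side into $(-)^{\vee} \langle 2 N_{\calP} \rangle$ on the algebraic side with target $\widetilde{\gamma}^{\calP}_{-\mu-2\rho}$) yields the desired isomorphism
\[
(\Ug)_0^{\widehat{\mu}} \ \cong \ \bigl((\Ug)_0^{\widehat{-\mu-2\rho}}\bigr)^{\vee} \langle 2 N_{\calP} \rangle
\]
of graded $(\Ug)_0^{\widehat{\mu}}$-modules. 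The only delicate point---the ``main obstacle''---is the bookkeeping of internal degrees and cohomological shifts in the parabolic analogue of \eqref{eq:isom-M-gr}: one must check that the grading shift coming from Proposition~\ref{prop:duality-P-singular} matches exactly the $\langle 3N \rangle$ appearing in the definition of $\bfD^{\gr}_{\calT,\calP}$, so that no extra shift is introduced beyond $\langle 2 N_{\calP} \rangle$ in the final formula. Once this is verified, the proof is essentially identical to that of Proposition~\ref{prop:duality-regular-rep}.
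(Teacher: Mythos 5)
Your proposal is correct and follows exactly the strategy the paper intends: the paper explicitly omits the proof in Section~\ref{sec:koszul-duality-singular} as ``similar to the regular case,'' and your argument is the faithful parabolic transcription of the proof of Proposition~\ref{prop:duality-regular-rep}, with $\calB$, $\calM^{\lambda}$, $N$, Propositions~\ref{prop:duality-L-P} and~\ref{prop:graded-duality} replaced by $\calP$, $\calM^{\mu}_{\calP}$, $N_{\calP}$, Propositions~\ref{prop:duality-P-singular} and~\ref{prop:graded-duality-singular}. Your flagged ``delicate point'' about internal shifts is resolved just as you suspect: since $\bfD^{\gr}_{\calT,\calP}$ is defined with the same shift $\langle -3N\rangle$ as $\bfD^{\gr}_{\calT}$ (not $\langle -3N_{\calP}\rangle$), and Proposition~\ref{prop:duality-P-singular} has no residual shift, the parabolic analogue of \eqref{eq:isom-M-gr} does carry $\langle 3N \rangle$, and then the $\langle 2N_{\calP}\rangle$ in the conclusion comes solely from Proposition~\ref{prop:graded-duality-singular}.
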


\begin{remark}

\begin{enumerate}

\item It follows in particular from this proposition that the maximal degree in the Koszul grading of $(\Ug)_0^{\widehat{\mu}}$ is $2N_{\calP}$. It follows also that the Poincaré polynomial $P_{\mu}$ of this grading satisfies $P_{\mu}(t^{-1})=t^{-2N_{\calP}} P_{\mu}(t)$.

\item One can easily check that the isomorphism of Proposition \ref{prop:duality-regular-rep-singular} is an isomorphism of graded $(\Ug)_0^{\widehat{\mu}}$-bimodules.

\end{enumerate}

\end{remark}

\section{Example: $\SL(2)$} \label{sec:example}

In this section we set $G=\SL(2)$, and we assume $p>2$. There is a natural isomorphism $\bbX \cong \bbZ$, such that $\rho=1$. Also, we have $\calB \cong \bbP^1$. For simplicity, we omit Frobenius twists in this section. Such a twist should appear on every variety we consider.

\subsection{Regular blocks}

In this case the regular blocks are those of the weights $0, \cdots, \frac{p-3}{2}$. Let us fix $\lambda \in \{ 0, \cdots, \frac{p-3}{2} \}$. The simple objects in $\Mod^{\fig}_{\lambda}((\calU \fg)_0)$ are $L(\lambda)$ (of dimension $\lambda+1$) and $L(p-2-\lambda)$ (of dimension $p-1-\lambda$). It is easy to check (see \cite[Corollary 7.2.6]{Ri}) that \[ \fL_1=\calO_{\bbP^1}(-1), \qquad \fL_{\tau_0} = \calO_{\bbP^1}(-2)[1] \] (considered as sheaves on the zero section of $\wcalN$). Hence the Koszul grading on $(\Ug)_0^{\widehat{\lambda}}$ is given by the isomorphism \[ (\Ug)_0^{\widehat{\lambda}} \ \cong \ \bigoplus_{n \in \bbZ} \, \Ext^n_{\wcalN}(\fL_1^{\oplus \lambda+1} \oplus \fL_{\tau_0}^{\oplus p-1-\lambda}, \, \fL_1^{\oplus \lambda+1} \oplus \fL_{\tau_0}^{\oplus p-1-\lambda}). \]

One can easily check, using the Koszul resolution \[ \calO_{\wcalN}(2) \hookrightarrow \calO_{\wcalN} \twoheadrightarrow \calO_{\bbP^1}, \] that we have \begin{align*} \Ext^*_{\wcalN}(\calO_{\bbP^1},\, \calO_{\bbP^1}) \ & \cong \ \bk \oplus \bk[-2], \\ \Ext^*_{\wcalN}(\calO_{\bbP^1}(-1),\, \calO_{\bbP^1}) \ & \cong \ V^*, \\ \Ext^*_{\wcalN}(\calO_{\bbP^1},\, \calO_{\bbP^1}(-1)) \ & \cong \ V[-2], \end{align*} where $V$ is a $2$-dimensional $\bk$-vector space. Hence we obtain an isomorphism \begin{multline*} (\Ug)_0^{\widehat{\lambda}} \ \cong \ \bigl( \mathrm{Mat}_{\lambda+1}(\bk) \otimes_{\bk} (\bk \oplus \bk[-2]) \bigr) \oplus \bigl( \mathrm{Mat}_{p-1-\lambda}(\bk) \otimes_{\bk} (\bk \oplus \bk[-2]) \bigr) \\ \oplus \bigl( V \otimes_{\bk} \mathrm{Mat}_{\lambda+1,p-1-\lambda}(\bk) [-1] \bigr) \oplus \bigl( V^* \otimes_{\bk} \mathrm{Mat}_{p-1-\lambda,\lambda+1}(\bk) [-1] \bigr). \end{multline*} The product is given by the matrix multiplication, together with the natural paring $V[-1] \times V^*[-1] \to \bk[-2]$.

In this case $w_0=-1$, hence all the blocks are self-dual. The identification of $(\Ug)_0^{\widehat{\lambda}}$ with (the shift of) its dual is given by the natural identification of the dual of $\mathrm{Mat}_{n,m}(\bk)$ with $\mathrm{Mat}_{m,n}(\bk)$, via the trace.

In particular, all regular blocks are of dimension $2 p^2$, and they are Morita equivalent to the path algebra of the quiver \[ \xymatrix@C=2cm{ \bullet \ar@/^2pc/[r]^-{u} \ar@/^1pc/[r]_-{v} & \bullet \ar@/^2pc/[l]^-{\overline{u}} \ar@/^1pc/[l]_-{\overline{v}} } \] with relations \begin{align*} \overline{u} v = \overline{v} u = 0, \quad & \overline{u} u = \overline{v} v, \\ u \overline{v} = v \overline{u} = 0, \quad & u \overline{u} = v \overline{v}. \end{align*} The grading is obtained by assigning each edge the degree $1$.

\subsection{Singular blocks}

The only singular block is that of $-1$. And the only simple object in the category $\Mod^{\fig}_{-1}((\calU \fg)_0)$ is $L(p-1)$, of dimension $p$. Moreover, this category is semisimple. Hence we have an isomorphism of graded rings \[ (\Ug)_0^{\widehat{-1}} \ \cong \ \mathrm{Mat}_p(\bk). \] Again, the identification of $(\Ug)_0^{\widehat{-1}}$ with its dual is given by the natural identification of $\mathrm{Mat}_p(\bk)$ with its dual.

\bigskip

\end{document}